\theoremstyle{plain} % definition 
\newtheorem{lemma}[equation]{Lemma} 
\newtheorem{proposition}[equation]{Proposition} 
\newtheorem{theorem}[equation]{Theorem} 
\newtheorem{corollary}[equation]{Corollary} 
\newtheorem{conjecture}[equation]{Conjecture}
\newtheorem{priorResults}{Theorem}
\theoremstyle{definition}
\theoremstyle{remark}
\newtheorem*{ack}{Acknowledgment}
\numberwithin{equation}{section}
\title[] {Sparse Bounds for Spherical Maximal Functions}
\author[M. T. Lacey]{Michael T. Lacey}   %  can use \and  
\address{ School of Mathematics, Georgia Institute of Technology, Atlanta GA 30332, USA}
\email {lacey@math.gatech.edu}
\thanks{Research supported in part by grant   National Science Foundation grant DMS-1600693, and by Australian Research Council grant DP160100153.  This material is based upon work supported by the National Science Foundation under Grant No. DMS-1440140, while the authors were in residence at the Mathematical Sciences Research Institute in Berkeley, California, during the Spring 2017 Semester. }
\begin{document}

\begin{abstract}
We consider the averages of a function $ f$ on $ \mathbb R ^{n}$ over spheres of radius $ 0< r< \infty $ given by  $ A _{r} f (x) = \int _{\mathbb S ^{n-1}} f (x- r y) \; d \sigma (y)$, where $ \sigma $ is the normalized rotation invariant  measure on $ \mathbb S ^{n-1}$.  We  prove a sharp range of sparse bounds for  two maximal functions, the first the lacunary spherical maximal function, and the second the full maximal function.  
$$
M _{\textup{lac}} f = \sup _{j\in \mathbb Z } A _{2^j} f ,
\qquad 
M _{\textup{full}} f   = \sup _{ r>0 } A _{r} f .  
$$
The sparse bounds are  very precise variants of the known $L^p$ bounds for these maximal functions. They are derived from known $ L ^{p}$-improving estimates for the localized versions of these maximal functions, 
and the indices in our sparse bound are sharp.  
We derive novel weighted inequalities for weights in the intersection of certain Muckenhoupt and reverse H\"older classes.   
\end{abstract}

\maketitle

%%%%%%%%%%%%%%%%%%%%%%%%%%%%%% SECTION  SECTION SECTION
%%%%%%%%%%%%%%%%%%%%%%%%%%%%%% SECTION  SECTION SECTION 
\section{Introduction} %\label{s:}
For a smooth function $ f$ on $ \mathbb R ^{n}$, 
let $ A _{r} f (x) = \int _{\mathbb S ^{n-1}} f (x- r y) \; d \sigma (y)$ be the average of $ f$ over the sphere centered at $ x$ and of radius $ r$.  
Here, $ \sigma $ is normalized measure on $ \mathbb S ^{n-1}$.  We consider the two maximal functions 
\begin{align}\label{e:lac}
M _{\textup{lac}} f  & = \sup _{j\in \mathbb Z } A _{2^j} f ,
\qquad 
M _{\textup{full}} f   = \sup _{ r>0 } A _{r} f .  
\end{align}  
The first is the lacunary maximal function, and the second is the full maximal function, introduced by E.~M.~Stein \cite{MR0420116}.   
For both of these, we prove  \emph{sparse bounds.} The latter are particular quantifications of the known $L^p$ inequalities for these operators.  In particular, these bounds quickly imply novel weighted inequalities, for weights in intersections of certain Muckenhoupt and reverse H\"older classes.  
These inequalities are the sharpest known for these operators.

\medskip 

We set notation for the sparse bounds.  
Call a collection of cubes $ \mathcal S$  in $ \mathbb R ^{n}$ \emph{sparse} if there 
are sets $ \{ E_S  \,:\, S\in \mathcal S\}$  
which are pairwise disjoint,   $E_S\subset  S$ and satisfy $ \lvert  E_S\rvert > \tfrac 14 \lvert  S\rvert  $ for all $ S\in \mathcal S$.
For any cube $ Q$ and $ 1\leq r < \infty $, set $ \langle f \rangle_ {Q,r} ^{r} = \lvert  Q\rvert ^{-1} \int _{Q} \lvert  f\rvert ^{r}\; dx  $.  Then the $ (r,s)_m$-sparse form $ \Lambda _{\mathcal S, r,s,m} = \Lambda _{r,s} $, indexed by the sparse collection $ \mathcal S$ is 
\begin{equation} \label{e:sparse_def}
\Lambda _{S, r, s,m} (f,g) = \sum_{S\in \mathcal S} \lvert  S\rvert \langle f  \rangle _{S,r} \langle g  \mathbf 1_{F_S}\rangle _{S,s}.  
\end{equation}
Here, the subscript $ {}_m$ is a reminder that the form has a maximal function component: The sets $ \{F_S \;:\; S\in \mathcal S\}$ are a collection of pairwise disjoint sets with $ F_S\subset S$ for all $ S \in \mathcal S$ (with no requirement on a lower bound on the measure of $ F_S$).   
If there is no subscript $ {}_m$, we mean the same bilinear form, but with $ \mathbf 1_{F_S} \equiv \mathbf 1_{S}$ for all cubes $ S$. The  sparse collection $\mathcal{S}$ is also frequently suppressed in the notation.

Given a  sublinear operator $ T$, and $ 1\leq r, s < \infty$, we set 
$ \lVert T \,:\, (r,s)_m\rVert$ to be the infimum over constants $ C$ so that for all  all bounded compactly supported functions $ f, g$, 
\begin{equation}\label{e:SF}
\lvert  \langle T f, g \rangle \rvert \leq C \sup  \Lambda _{r,s,m} (f,g), 
\end{equation}
where the supremum is over all sparse forms.  
It is essential that the sparse form be allowed to depend upon $ f $ and $ g$. But the point is that the sparse form itself varies over a class of operators with very nice properties.   

We include a discussion of the  lacunary maximal operator for pedagogical reasons.  The following $L^p$ bounds are well known. 

\begin{priorResults}\cites{MR1567040,MR537803}  For all $1<p<\infty$, and dimensions $n$,
we have $\lVert M _{\textup{lac}} \,:\, L^p \mapsto L^p\rVert < \infty$. 
\end{priorResults}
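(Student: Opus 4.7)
The plan is the classical Fourier-analytic decomposition: compare each spherical average with a smooth ball-average, dominate $M_{\textup{lac}}$ pointwise by the Hardy--Littlewood maximal function plus a square function in the differences, and then bound the square function via $L^{2}$ Plancherel together with vector-valued Calder\'on--Zygmund theory.

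Concretely, fix a nonnegative radial Schwartz bump $\phi$ with $\int \phi = 1$, set $\phi_{r}(x) = r^{-n}\phi(x/r)$, and define $P_{r} f = f * \phi_{r}$. Since $\sup_{r>0} |P_{r} f| \lesssim Mf$ pointwise, with $M$ the Hardy--Littlewood maximal function, we get
\[
M_{\textup{lac}} f \;\leq\; \sup_{j \in \mathbb{Z}} |P_{2^{j}} f| \;+\; Sf,
\qquad
Sf := \Bigl(\sum_{j \in \mathbb{Z}} |(A_{2^{j}} - P_{2^{j}}) f|^{2}\Bigr)^{1/2},
\]
and the first term is $L^{p}$-bounded for every $1 < p \leq \infty$. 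The claim therefore reduces to $\|Sf\|_{p} \lesssim \|f\|_{p}$ for $1 < p < \infty$.

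For the $L^{2}$ bound, apply Plancherel: the Fourier multiplier of $A_{r} - P_{r}$ is $m_{r}(\xi) = \widehat{\sigma}(r\xi) - \widehat{\phi}(r\xi)$. At the origin, both $\widehat{\sigma}$ and $\widehat{\phi}$ equal $1$ and are radially symmetric with Taylor expansion $1 + O(|\eta|^{2})$, giving $|m_{r}(\xi)| \lesssim \min(1,|r\xi|^{2})$; at infinity, the stationary-phase bound $|\widehat{\sigma}(\eta)| \lesssim |\eta|^{-(n-1)/2}$ (valid for $n \geq 2$) together with the Schwartz decay of $\widehat{\phi}$ yields $|m_{r}(\xi)| \lesssim (1+|r\xi|)^{-(n-1)/2}$. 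Summing these estimates geometrically in $j$ produces $\sum_{j\in\mathbb{Z}} |m_{2^{j}}(\xi)|^{2} \leq C$ uniformly in $\xi$, so Plancherel gives $\|Sf\|_{2} \lesssim \|f\|_{2}$.

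To upgrade from $L^{2}$ to $L^{p}$ for $1 < p < \infty$, view the scalar convolution kernels $K_{j} := \sigma_{2^{j}} - \phi_{2^{j}}$ as the components of an $\ell^{2}$-valued kernel $K(x) = (K_{j}(x))_{j \in \mathbb{Z}}$. The previous step is precisely the $L^{2}\to L^{2}(\ell^{2})$ boundedness of the associated vector-valued convolution operator, and vector-valued Calder\'on--Zygmund theory then yields $L^{p} \to L^{p}(\ell^{2})$ boundedness once the $\ell^{2}$-valued H\"ormander condition $\int_{|x|>2|y|} \|K(x-y) - K(x)\|_{\ell^{2}}\, dx \lesssim 1$ is verified. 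The main technical point is exactly this H\"ormander estimate: each $\sigma_{2^{j}}$ is a singular measure supported on the sphere of radius $2^{j}$, so the argument exploits the scale-separation (the essential support of $K_{j}$ concentrates in the shell $|x|\approx 2^{j}$) together with the smoothness supplied by the companion $\phi_{2^{j}}$ term, summing the translation differences across lacunary scales in $\ell^{2}$; this is the classical delicate step in the Rubio de Francia / Duoandikoetxea treatment of lacunary radial maximal functions.
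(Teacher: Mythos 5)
Your first two steps are sound: the pointwise domination $M_{\textup{lac}} f \leq \sup_j |P_{2^j} f| + Sf$ and the $L^2$ bound for $Sf$ via the multiplier estimates $|m_r(\xi)| \lesssim \min\{|r\xi|^2, (1+|r\xi|)^{-(n-1)/2}\}$ are both correct. The gap is in the third step: the $\ell^2$-valued H\"ormander condition you propose to verify is \emph{false} for the raw kernel $K=(K_j)_{j}$, $K_j = \sigma_{2^j} - \phi_{2^j}$. Fix $y \neq 0$ and any $j$ with $2^j \gg |y|$. The measures $\sigma_{2^j}(\cdot - y)$ and $\sigma_{2^j}$ are mutually singular (two distinct spheres of equal radius intersect in a set of surface measure zero), so the total variation of their difference is exactly $2$, with no gain as $|y|/2^j \to 0$; and the absolutely continuous difference $\phi_{2^j}(\cdot - y)-\phi_{2^j}$ cannot cancel any of that singular mass. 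Hence $\int |K_j(x-y)-K_j(x)|\,dx \geq 2$ for every such $j$, and since these differences are supported on essentially disjoint shells $|x|\approx 2^j$, one gets $\int_{|x|>2|y|} \lVert K(x-y)-K(x)\rVert_{\ell^2}\,dx \gtrsim \sum_{j\,:\,2^j \gtrsim |y|} 1 = \infty$. The Schwartz companion $\phi_{2^j}$ repairs the multiplier at the origin, but it supplies no spatial regularity to the singular measure, so vector-valued Calder\'on--Zygmund theory cannot be applied to $K$ directly.

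The standard repair — and what the Duoandikoetxea--Rubio de Francia scheme you cite actually does — is a further Littlewood--Paley decomposition \emph{in frequency}: write $K_j = \sum_{k\in\mathbb{Z}} K_j \ast \psi_{j+k}$ and set $S_k f = \bigl(\sum_j |f \ast K_j \ast \psi_{j+k}|^2\bigr)^{1/2}$. Plancherel with your multiplier bounds gives $\lVert S_k \rVert_{2\to 2} \lesssim 2^{-\epsilon|k|}$, while each $S_k$ now has a smooth $\ell^2$-valued kernel whose H\"ormander constant grows at most like $2^{C|k|}$; interpolating and summing in $k$ (after a bootstrap to reach all $1<p<\infty$) completes the argument. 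So your strategy is completable, but the decisive decomposition is missing, and the step as written fails. Note also that this is the classical route the paper deliberately avoids: the statement is quoted from Calder\'on and Coifman--Weiss, and the paper's own new proof of the $L^p$ bounds is intrinsic — it derives the sparse bound of Theorem~\ref{t:lac} from the Littman--Strichartz $L^p$-improving estimates of Theorem~\ref{t:LS} and their continuity variant, and then the $L^p$ boundedness follows trivially from the sparse form bound as in Proposition~\ref{p:pp}, with no square function or multiplier theory at all.
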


The proofs for the result above compare to the Hardy-Littlewood maximal function, and pass through a square function. For the sparse bound, we will argue directly. 
The bounds below contains the $L^p$ bounds as a trivial corollary, and so it represents a new proof of this fact, one that is intrinsic, in that it only uses properties of spherical averages.

%%%%%%%%%%%%%%%%%%%%%%%%%%%%%% THEOREM THEOREM THEOREM
\begin{theorem}\label{t:lac}
Let $ \mathbf L_n$ be the triangle with vertexes $ (0,1)$, $ (1,0)$ and $ (\frac n {n+1}, \frac n {n+1})$. 
(See Figure~\ref{f:L}.) 
For $ n\geq 2$, and 
all  $(\frac 1r, \frac 1s)$  in the interior of  $\mathbf L_n$, we have the inequality  
\begin{equation}  \label{e:LAC}
\lVert M _{\textup{lac}}  : (r, s)_m \rVert < \infty . 
\end{equation} 
Moreover, for $ \frac 1r+\frac1s >1$ not in the closed set  $  \mathbf L_n$, the inequality \eqref{e:LAC} fails. 
\end{theorem}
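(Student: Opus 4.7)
The strategy is to derive the sparse bound \eqref{e:LAC} from an $L^p$-improving estimate for the single-scale spherical average $A_1$, via a standard Calder\'on--Zygmund stopping-time construction, and to establish the sharpness via explicit extremizers.

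The analytic input is the classical Littman--Strichartz bound: $A_1 \colon L^r \to L^{s'}$ is bounded (with $1/s + 1/s' = 1$) precisely when $(1/r, 1/s) \in \mathbf L_n$. Rescaling gives, for any cube $Q$ of sidelength $\ell(Q)$, any radius $\rho \leq \ell(Q)$, and any function $g$ supported in $Q$,
\begin{equation}\label{e:plan-single}
\bigl|\langle A_\rho (f\mathbf{1}_{3Q}), g\rangle\bigr| \lesssim (\rho/\ell(Q))^{-n(1/r+1/s-1)} \, |Q|\, \langle f\rangle_{3Q,r}\, \langle g\rangle_{Q,s}.
\end{equation}
For $(1/r, 1/s)$ in the interior of $\mathbf L_n$ one has $1/r+1/s > 1$ (strictly above the lower edge), so the prefactor grows as $\rho$ shrinks and the bound is genuinely useful only at the ``matched'' scales $\rho \sim \ell(Q)$; at such scales, it contributes $O(|Q|\langle f\rangle_{3Q,r}\langle g\rangle_{Q,s})$ per cube.

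The sparse collection $\mathcal S$ is built top-down: fix a large dyadic cube $Q_0$ containing the supports of $f,g$, and recursively select, within each $Q\in\mathcal S$, the maximal strict subcubes $Q'\subsetneq Q$ on which either $\langle f\rangle_{Q',r}>C_0\langle f\rangle_{Q,r}$ or $\langle g\rangle_{Q',s}>C_0\langle g\rangle_{Q,s}$. Choosing $C_0$ large gives $|Q\setminus \bigcup Q'| > \tfrac14|Q|$, hence sparsity. The maximal-function component $F_Q$ is extracted from a measurable selector $j(x)$ with $M_{\textup{lac}}f(x)=A_{2^{j(x)}}f(x)$. For each $Q \in \mathcal S$ I estimate the pairing restricted to $x\in E_Q := Q \setminus \bigcup Q'$: the contribution from scales $2^j \sim \ell(Q)$ interacting with $f\mathbf{1}_{3Q}$ is controlled by \eqref{e:plan-single}; the scales $2^j \ll \ell(Q)$ are handled by the descendants in $\mathcal S$ (for on $E_Q$ the stopping-time control prevents such scales from contributing more than the average of $f$ on $Q$); and the scales $2^j \gg \ell(Q)$, which necessarily involve only $f$ on $(3Q)^c$, are reassigned to an appropriate ancestor of $Q$ in $\mathcal S$.

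Sharpness on the complement of $\mathbf L_n$ (with $1/r+1/s>1$) is obtained via the standard Knapp-type extremizers for spherical averages: indicator functions of thin spherical shells paired against balls, translated and scaled so that the pairing is concentrated on exactly one scale $2^j$. The principal obstacle is the bookkeeping in the iteration---organizing the three ranges of scales (local, smaller, larger) so that each contribution is correctly allocated to exactly one cube of $\mathcal S$ without double counting, while keeping the stopping-time selection large enough to dominate the accumulated error. The lacunary structure is essential here, since it allows a clean geometric summation over the discrete scales of $M_{\textup{lac}}$; the continuous-scale analogue needed for $M_{\textup{full}}$ will require substantially more delicate decomposition.
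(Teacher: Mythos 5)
There is a genuine gap, and it sits exactly where the theorem's content lies: the scales $2^{j}\ll \ell (Q)$ at points of $E_Q$. Since $E_Q=Q\setminus \bigcup Q'$ is by construction disjoint from the stopping children, the small-scale contribution on $E_Q$ cannot be ``handled by the descendants in $\mathcal S$'' --- there are no cubes of $\mathcal S$ below $Q$ covering $E_Q$ --- so it must be paid for at $Q$ itself, and your parenthetical pointwise claim that the stopping condition controls it is false. Spherical means are singular averages and are not dominated by local $L^r$ averages for any finite $r$: fix $x\in E_Q$ and take $f$ to be the indicator of the $\delta $-neighborhood of the sphere $\{y: \lvert y-x\rvert =2^{j}\}$ with $2^{j}\ll \ell (Q)$; then $A_{2^{j}}f(x)=1$ while $\langle f\rangle_{Q'',r}\lesssim (\delta /2^{j})^{1/r}\to 0$ for every cube $Q''\ni x$ of side $\gtrsim 2^{j}$, so no stopping control on $L^r$ averages yields a pointwise bound. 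Worse, if your claim were true it would deliver an $(r,1)_m$ sparse bound for finite $r$, and the sharpness half of the very theorem you are proving forbids this: $(\frac1r,1)\notin \mathbf L_n$ once $\frac 1r>0$ (see \eqref{e:zlac}). Your matched-scale estimate and the reassignment of large scales to ancestors are both fine (the latter is the paper's $\tfrac13 Q$ localization with shifted dyadic grids), but the static Littman--Strichartz bound alone cannot close the small-scale regime.

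The paper's proof supplies precisely the missing ingredient. In Lemma~\ref{l:lac_sparse} all scales inside the good cubes are kept together, and $f_1$ is given a Calder\'on--Zygmund decomposition adapted to the stopping cubes \eqref{e:bad1}: the good part is bounded and is summed using only the disjointness of the sets $F_Q$, while the bad part at depth $k$ below $\ell (Q)$ has mean zero on its cubes, and this cancellation is converted into geometric decay $2^{-\eta k}$ in \eqref{e:zcont} by a \emph{quantitative continuity} improvement of Littman--Strichartz, Theorem~\ref{t:littman}: $\lVert A_1-\tau _y A_1 : L^r\mapsto L^s\rVert \lesssim \lvert y\rvert ^{\eta }$, proved by interpolating the improving estimates with an $L^2$ Plancherel bound using the decay of $\widehat{d\sigma }$. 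Nothing in your proposal produces this decay. Two further points you would eventually hit: the key summation \eqref{e:SQ} uses $\frac1r+\frac1s\geq 1$ and requires $f_1$ to be an indicator (the stopping condition is nonlinear), after which the paper passes to general $f_1$ via level sets and an $L^{\rho ,1}$ Lorentz bound; and for sharpness one needs the reduction \eqref{e:ONE} of the supremum of sparse forms to a single form supported on one cube, together with the dual pairing $\langle f_\delta , A_1 g_\delta \rangle$, to extract \emph{both} constraints $\frac 1r+\frac ns\leq n$ and $\frac nr+\frac1s\leq n$ --- your annulus-against-ball pairing as sketched gives only one edge of the triangle.
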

%%%%%%%%%%%%%%%%%%%%%%%%%%%%%% THEOREM THEOREM THEOREM

\medskip 

The case of the full maximal operator is more delicate. The foundation al work is due to E.~M.~Stein, in dimensions $n\ge3$,  and Bourgain in the delicate case of $n=2$. 

\begin{priorResults}\cites{MR1567040,MR537803}  For   and dimensions $n \geq 2$,  we have  
\begin{equation}
\lVert M _{\textup{full}} \,:\, L^p \mapsto L^p\rVert < \infty, \qquad \tfrac {n} {n-1} < p < \infty.
\end{equation}
\end{priorResults}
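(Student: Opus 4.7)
The plan is to dominate $M_{\textup{full}}$ by $M_{\textup{lac}}$ plus a continuous oscillation term, and to bound the oscillation by a square function whose Fourier multiplier is controlled by the stationary--phase decay $\widehat\sigma(\xi) = O(|\xi|^{-(n-1)/2})$. For each dyadic shell $[2^j,2^{j+1}]$, the fundamental theorem of calculus together with Cauchy--Schwarz yields
\begin{equation*}
\sup _{r\in [2^j,2^{j+1}]} |A_r f(x) - A_{2^j} f(x)| \; \leq \; \Bigl( 2^j \int _{2^j} ^{2^{j+1}} |\partial_r A_r f(x)|^{2} \, dr \Bigr) ^{1/2}.
\end{equation*}
Taking the supremum over $j \in \mathbb Z $ and combining with $M_{\textup{lac}} f$ gives the pointwise bound
\begin{equation*}
M_{\textup{full}} f(x) \; \leq \; M_{\textup{lac}} f(x) + G f(x), \qquad G f(x) := \Bigl( \int _{0} ^{\infty } r\, |\partial_r A_r f(x)|^{2} \, dr \Bigr) ^{1/2}.
\end{equation*}
By the previously stated $L^{p}$ theorem for the lacunary operator, it remains to establish $\lVert Gf \rVert _{p} \lesssim \lVert f \rVert _{p}$ for $p > n/(n-1)$.

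For this I would apply a Littlewood--Paley decomposition $\widehat \sigma = \widehat \psi + \sum _{k\geq 0} m_k$, with $m_k$ a smooth frequency--bump at scale $|\xi| \sim 2^k$, producing frequency--localised averages $A _{r} ^{k}$ with associated maximal and square functions. Plancherel together with the estimates $\widehat \sigma , \nabla \widehat \sigma = O (|\xi|^{-(n-1)/2})$ yields geometric $L^{2}$--decay in $k$ of the frequency--localised square function, while the trivial pointwise bound $\sup _{r} |A _{r} ^{k} f| \lesssim 2^{k} M _{\textup{HL}} f$ coming from the size of the kernel of $m_k$ gives $L^{q}$--boundedness with only polynomial growth in $2^k$ for every $q > 1$. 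Real interpolation between these two bounds, followed by summation in $k$, closes the argument at any $p > n/(n-1)$; the critical exponent emerges precisely as the balance between the $L^{2}$ gain and the $L^{q}$ loss. This is essentially Stein's original $g$--function proof in dimensions $n \geq 3$.

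The main obstacle is the two--dimensional case $n=2$, where the Fourier decay rate $|\xi|^{-1/2}$ is exactly borderline, the $L^{2}$ bound on $G_k$ is barely $O(1)$, and direct summation in $k$ fails. This is Bourgain's circular maximal theorem, and it requires a substantially more delicate argument: one must exploit the \emph{phase} of $\widehat \sigma$, not merely its size, via an induction-on-scales or bootstrap across Littlewood--Paley levels, combined with an $L^{p}$--improving input for the single localised average $A_1$ that reflects the nonvanishing curvature of the circle. The endpoint $p = n/(n-1)$ is sharp, as can be seen by testing against the characteristic function of a thin annular shell near the origin.
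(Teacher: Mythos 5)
You should first note that the paper never proves this statement: it is displayed as a quoted prior result, attributed in the surrounding text to Stein ($n\geq 3$) and Bourgain ($n=2$), and the only proof implicit in the paper runs along an entirely different route from yours --- the sparse bound of Theorem~\ref{t:full} (itself built on the $L^p$-improving estimates of Theorem~\ref{t:S} for the local operator $\tilde M$, hence on Schlag, Schlag--Sogge and Lee) combined with Proposition~\ref{p:pp}: for $p>\tfrac n{n-1}$ the point $(\tfrac1p,\tfrac1{p'})$ lies on the open edge from $P_1$ to $P_2$ of $\mathbf F_n$, so one may choose $(\tfrac1r,\tfrac1s)$ in the interior of $\mathbf F_n$ with $r<p<s'$ and conclude $\lVert M_{\textup{full}}: L^p\mapsto L^p\rVert<\infty$. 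Your proposal instead reconstructs the classical direct argument, which is a legitimate alternative for $n\geq3$; but one step fails quantitatively as written, and the case $n=2$ is cited rather than proved.

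The concrete gap is the claim that the frequency-localised square function decays geometrically in $k$ on $L^2$. Your opening Cauchy--Schwarz spends a \emph{full} derivative, and on the $k$-th Littlewood--Paley piece a full derivative costs a factor $2^k$ while the size bound only refunds $2^{-k(n-1)/2}$: since $\lvert\nabla\bigl(\zeta(\cdot/2^k)\,\widehat{d\sigma}\bigr)(\eta)\rvert\simeq 2^{-k(n-1)/2}$ on $\lvert\eta\rvert\simeq2^k$ (the gradient falling on the phase $e^{\pm i\lvert\eta\rvert}$ gives no gain), Plancherel yields $\sup_\xi\int_0^\infty r\,\lvert\partial_r[m_k(r\xi)]\rvert^2\,dr\simeq 2^{k(3-n)}$, that is $\lVert G_k\rVert_{L^2\to L^2}\lesssim 2^{-k(n-3)/2}$. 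This is $O(1)$ at $n=3$ --- no decay, so no interpolation with the growing bound $2^kM_{\textup{HL}}$ can make the sum over $k$ converge --- and even for $n\geq4$ the interpolation arithmetic gives only $p>\tfrac{n-1}{n-2}$ (for instance $p>\tfrac32$ at $n=4$), not the asserted sharp range $p>\tfrac n{n-1}$. The standard repair, which is what Stein's argument actually does, is to spend only \emph{half} a derivative: per Littlewood--Paley piece use $\sup_r\lvert F(r)\rvert^2\leq\lvert F(1)\rvert^2+2\bigl(\int_0^\infty\lvert F\rvert^2\,\tfrac{dr}r\bigr)^{1/2}\bigl(\int_0^\infty\lvert r\,\partial_rF\rvert^2\,\tfrac{dr}r\bigr)^{1/2}$ (or a fractional $g_\beta$ with $\beta>\tfrac12$), so that the underived factor $2^{-k(n-1)/2}$ and the derived factor $2^{-k(n-3)/2}$ average to $2^{-k(n-2)/2}$, which decays for all $n\geq3$ and, after your interpolation with the $2^kM_{\textup{HL}}$ bound, delivers exactly $p>\tfrac n{n-1}$. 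Finally, for $n=2$ you give a description, not a proof; that is defensible, since the paper itself only cites Bourgain, but then your write-up establishes the theorem only for $n\geq4$ as it stands, whereas the paper's sparse route absorbs the two-dimensional difficulty by importing Lee's local estimates \eqref{e:M_Lee} rather than reproving Bourgain.
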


 The sparse bound below is again a very precise refinement of the well known inequalities above. 

%%%%%%%%%%%%%%%%%%%%%%%%%%%%%% THEOREM THEOREM THEOREM
\begin{theorem}\label{t:full} For $ n\geq 2$ and   let  
 $ \mathbf F _n$ be the trapezium   with vertexes $ P_1 =(0,1)$, $P_2= (\frac {n-1}n, \frac 1n)$,  $P_3 = (\frac {n-1} {n},  \frac {n-1}n)$,
 and $ P_4 = (\frac {n ^2 -n} {n ^2 +1}, \frac {n ^2 -n+2} {n ^2 +1})$.   (See Figure~\ref{f:F}.) 
 For all $ (\frac1r,\frac1s)$ in the interior of $ \mathbf F_n$, there holds 
 \begin{equation}\label{e:full}
\lVert M _{\textup{full}}  : (r,s)_m \rVert < \infty . 
\end{equation}
Moreover, for $ \frac 1r+\frac1s >1$ not in the closed set  $  \mathbf F _n$, the inequality \eqref{e:full} fails. 
\end{theorem}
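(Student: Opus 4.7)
The positive direction rests on single-scale $L^p$-improving estimates for the localized operator $\widetilde M f \coloneqq \sup_{r\in[1,2]} A_r f$, due to Schlag (for $n=2$) and Schlag-Sogge (for $n\ge 3$): the bound $\widetilde M\colon L^p\to L^q$ holds precisely when $(1/p,1/q)$ lies in a closed quadrilateral whose four vertices, under the correspondence $(1/p,1/q)\leftrightarrow (1/r,1-1/s)$, coincide with $P_1,P_2,P_3,P_4$. Parabolic rescaling then yields
\begin{equation}\label{e:Mk-bound}
\|M_k f\|_{L^q} \le C\, 2^{kn(1/q-1/p)}\|f\|_{L^p}, \qquad M_k f \coloneqq \sup_{r\in[2^k,2^{k+1}]} A_r f,
\end{equation}
with a geometric gain in $k$ whenever $(1/p,1/q)$ is strictly interior. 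Combining \eqref{e:Mk-bound} with H\"older on a cube $Q$ of side $2^k$, and noting that $M_k f(x)$ for $x\in Q$ depends only on $f\mathbf{1}_{CQ}$, produces the local form
\begin{equation}\label{e:localform-full}
\int_Q (M_k f)\,g\,dx \le C\,2^{-k\alpha}\,|Q|\,\langle f\rangle_{CQ,r}\,\langle g\rangle_{Q,s}, \qquad \alpha=\alpha(r,s)>0,
\end{equation}
valid for every $(1/r,1/s)$ in the interior of $\mathbf F_n$.

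The sparse bound is then assembled by a standard stopping-time recursion. For $f,g$ supported in a large cube $Q_0\in\mathcal S$, define $\mathcal B(Q_0)$ to be the collection of maximal dyadic subcubes $Q\subset Q_0$ for which $\langle f\rangle_{Q,r}>\kappa\langle f\rangle_{Q_0,r}$ or $\langle g\rangle_{Q,s}>\kappa\langle g\rangle_{Q_0,s}$; strong-type $(r,r)$ and $(s,s)$ bounds for the Hardy-Littlewood maximal function give $|\bigcup\mathcal B(Q_0)|\le\tfrac12|Q_0|$ for $\kappa$ large, and we iterate inside each $Q\in\mathcal B(Q_0)$ to produce the sparse collection $\mathcal S$. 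On the good set $F_{Q_0}\coloneqq Q_0\setminus\bigcup\mathcal B(Q_0)$ we partition further by the scale $k=k(x)$ at which the supremum in $M_{\textup{full}}f(x)$ is essentially attained; only scales $2^k\lesssim\ell(Q_0)$ contribute, since $A_t f(x)=0$ for $x\in Q_0$ and $t>\mathrm{diam}(Q_0)$. Summing \eqref{e:localform-full} over these $k$ and using $\langle f\rangle_{CQ,r}\lesssim\langle f\rangle_{Q_0,r}$ for $Q\not\subset\bigcup\mathcal B(Q_0)$ delivers the $(r,s)_m$ sparse bound, the disjoint sets $F_S$ in \eqref{e:sparse_def} being the scale-level subsets inside each $F_{Q_0}$.

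The main technical issue is exhausting the interior of $\mathbf F_n$: any single pair $(p,q)$ in the improving region corresponds to only one point on $\partial\mathbf F_n$, and one must interpolate between vertices to reach each interior point; this is possible because the Schlag-Sogge quadrilateral is closed and contains a neighborhood of the preimage of any interior point of $\mathbf F_n$. For the sharpness claim when $(1/r,1/s)\notin\mathbf F_n$, standard defeating examples suffice: $L^\infty$-testing rules out the edges adjacent to $P_1$; thin spherical shells of radius $\sim 1$ and thickness $\delta\downarrow 0$ (Knapp-type configurations) pin down the vertex $P_4$ and the adjacent edges $P_1P_4$ and $P_3P_4$; and a dilate of an $L^{n/(n-1)}$-endpoint extremizer of $M_{\textup{full}}$ itself rules out the vertical edge $P_2P_3$.
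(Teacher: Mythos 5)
Your proof collapses at the claimed local estimate with the factor $2^{-k\alpha}$. The rescaled bound $\lVert M_k f\rVert_q \lesssim 2^{kn(1/q-1/p)}\lVert f\rVert_p$ carries no usable gain: when you localize to a cube $Q$ with $\ell Q \sim 2^k$ and pass to normalized averages, the factor $2^{kn(1/q-1/p)}$ is cancelled exactly by $\lvert CQ\rvert^{1/p}\,\lvert Q\rvert^{1/q'} \sim 2^{kn(1/p+1/q')}$, leaving the scale-invariant estimate $\int_Q (M_k f)\,g \lesssim \lvert Q\rvert \langle f\rangle_{CQ,p}\langle g\rangle_{Q,q'}$ with constant \emph{independent of $k$}. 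This is forced: $M_{\textup{full}}$ commutes with dilations, so no inequality of this normalized shape can contain a factor depending on the absolute scale $2^k$ (dilate the whole configuration and the claimed factor changes while both sides scale identically). Taking $(1/p,1/q)$ strictly interior to the Schlag--Sogge region only perturbs the exponents, and the perturbation cancels in the same way. Consequently, in your final summation each scale $k\le \log_2\ell(Q_0)$ inside a stopping cube contributes $\sim \lvert Q_0\rvert\langle f\rangle_{Q_0,r}\langle g\rangle_{Q_0,s}$ with no decay, and since the supremum is over all radii $r>0$ there are infinitely many scales: the sum diverges. Disjointness of your scale-level sets does not rescue this at exponents $r,s>1$: H\"older across scales costs a factor like $(\#\,\text{scales})^{1/s'}$. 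This is precisely why, in the paper, the only term handled by disjointness alone is the good-function term, which is run at $L^\infty\times L^1$.

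The paper's decay mechanism is of a genuinely different nature, and it is the part your proposal omits. One first proves a translation-continuity estimate, $\bigl\lVert \sup_{1\le t\le 2}\lvert A_t f - \tau_y A_t f\rvert \bigr\rVert_s \lesssim \lvert y\rvert^{\eta}\lVert f\rVert_r$ as in \eqref{e:full_improve_cont} --- via Plancherel for $n\ge 3$, but for $n=2$ via Lee's endpoint bounds together with a Littlewood--Paley decomposition, a nontrivial step with no analogue in your sketch. This is then deployed not against all scales but against a Calder\'on--Zygmund decomposition of $f_1$ adapted to the stopping cubes, as in \eqref{e:bad1}: the bad part has mean zero on cubes at relative depth $k$ below a given $Q$, and continuity converts that cancellation into the geometric factor $2^{-\eta k}$ of \eqref{e:full3} --- decay in the \emph{relative} scale gap, which is consistent with dilation invariance. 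Two further ingredients you would still need even with this fixed: the key lemma is proved only for $f_1$ an indicator function (the stopping conditions are nonlinear, blocking a direct interpolation), and the passage to general $f_1$ requires the $L^{\rho,1}$-Lorentz decomposition carried out in the proof of Theorem~\ref{t:lac}. Your sharpness discussion, by contrast, is essentially the paper's (annulus for the edge $P_1P_4$, Knapp for $P_3P_4$, Stein's example for $P_2P_3$) and is fine once you make explicit the observation that an $(r,s)$ sparse bound implies the weak-type $(r,r)$ inequality.
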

%%%%%%%%%%%%%%%%%%%%%%%%%%%%%% THEOREM THEOREM THEOREM

One of the great advantages of sparse bounds is that one can easily derive weighted inequalities for sparse operators, indeed inequalities with sharp dependence upon the Muckenhoupt and reverse H\"older constants. We will discuss this in detail in \S \ref{s:weighted}. 
Weighted inequalities for the spherical maximal function  in the category of  Muckenhoupt and  reverse H\"older classes 
has been studied in \cites{MR1922609,MR1373065}.  We recover and extend their results using the sparse bound. See for instance Proposition~\ref{p:n}

Sparse bounds for different operators is a recent topic of research. These arguments have delivered the most powerful known proof \cite{MR3625108} of the $ A_2$ conjecture. They quickly prove sharp weighted estimates for commutators 
 \cite{160401334}. In other settings, they establish weighted inequalities \cite{160305317} for the bilinear Hilbert transform, as well as other objects in phase plane analysis \cite{161203028}.  Some of these arguments are rather short and elegant, using familiar $ T T ^{\ast} $ style arguments  \cite{2016arXiv161209201C} to provide remarkably sharp control of rough singular integrals.  
 Also see \cites{2016arXiv160901564K,2017arXiv170105170L,2017arXiv170105249K} for further work in this direction.  
In the setting of Radon transforms, the paper \cite{2016arXiv161208881C} discusses a particular arithmetic example, showing that sparse bounds are possible in that setting. Random examples have been considered  in \cites{160908701,160906364,2016arXiv161004968K}.   This paper proves the \emph{first sparse bounds for a Radon transform in the continuous case.}

Our sparse bounds are sharp in the scale of $ L ^{p}$ averages. Sharper results can be obtained using local Lorentz-Orlicz averages at the endpoint cases. The latter is the focus of the article of Richard Oberlin \cite{170404297}. 
Given the close association between sparse bounds and weighted inequalities in other settings, one then suspects that the weighted inequalities that follow are the best possible in the category of Muckenhoupt and reverse H\"older classes.  
In another direction, the core innovation is the identification of the central role of the $L^p$ improving inequalities.  The sharp range of improving inequalities are known for a wide range of Radon transforms. 
Many of these can now be extended to sparse bounds for allied maximal functions.

\bigskip 
We prove the sparse bounds for $ M _{\textup{lac}}$ first, followed by that for $ M _{\textup{full}}$.
Both use the same tool, the $ L ^{p} $ improving mapping properties of the unit scale version of the maximal operators. 
In fact, we need a `continuity' version of these inequalities.   These appear to be new, and  are proved in \S \ref{s:continuity}.    
Once the continuity inequalities are established, the remaining argument is a  variant, but not a corollary, of the innovative paper of Conde, Culiuc, Di Plinio and Ou \cite{2016arXiv161209201C}. 
The argument is presented in detail. 
We then turn to the consequences for weighted inequalities in \S~\ref{s:weighted}.  A final section includes various complements.

\bigskip 
\begin{ack}
 It is a pleasure to acknowledge the interest and input of several people: 
Laura Cladek, Francesco Di Plinio, Richard Oberlin,  Yumeng Ou, and Betsy Stovall, as well as the anonymous referee.   Luz Roncal pointed out an oversight concerning interpolation.  
 \end{ack}

%%%%%%%%%%%%%%%%%%%%%%%%%%%%%% SUBSECTION SUBSECTION SUBSECTION SUBSECTION
 %%%%%%%%%%%%%%%%%%%%%%%%%%%%%% SUBSECTION SUBSECTION SUBSECTION SUBSECTION 

%%%%%%%%%%%%%%%%%%%%%%%%%%%%%% SECTION  SECTION SECTION
%%%%%%%%%%%%%%%%%%%%%%%%%%%%%% SECTION  SECTION SECTION 
\section{The Lacunary Case} %\label{s:}

The argument has two components, one being a (small) improvement to the classical  $ L ^{p}$-improving properties of the spherical averages due to Littman \cite{MR0358443} and Strichartz \cite{MR0256219}.  
We set $ \mathbf L_n $ to be the triangle with vertexes $ (0,1)$, $ (1,0)$ and $ (\frac {n} {n+1}, \frac n {n+1})$. 
Consider the dual  to $ \mathbf L_n$, defined by $ \mathbf L_n '= \{(\frac1p, \frac1q) :  (\frac1p , 1- \frac1q) \in \mathbf L_n\}$.  See Figure~\ref{f:L}.  

%%%%%%%%%%%%%%%%%%%%%%%%%%%%%% THEOREM THEOREM THEOREM
\begin{priorResults}\label{t:LS}\cites{MR0358443,MR0256219} For any point $  (\tfrac 1r, \tfrac 1s)$ in the closed triangle $ \mathbf L_n'$, there holds 
\begin{equation}\label{e:LS}
\lVert A_1     : L ^{r} \mapsto L ^{s} \rVert < \infty . 
\end{equation}
\end{priorResults}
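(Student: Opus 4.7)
The plan is to derive the full triangle $\mathbf{L}_n'$ from one non-trivial endpoint estimate together with trivial $L^p \to L^p$ bounds, via Riesz--Thorin interpolation. Write $A_1 f = f \ast \sigma$, where $\sigma$ is the normalized surface measure on $\mathbb{S}^{n-1}$. Since $\sigma$ is a finite measure, Young's inequality gives
\begin{equation}
\lVert A_1 : L^p \mapsto L^p \rVert \leq 1, \qquad 1\leq p \leq \infty,
\end{equation}
which supplies the boundary points $(1/p,1/p)$ of $\mathbf{L}_n'$, in particular the vertices $(0,0)$ and $(1,1)$.

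The main task is to establish the nontrivial corner
\begin{equation}
\lVert A_1 : L^{(n+1)/n} \mapsto L^{n+1} \rVert < \infty,
\end{equation}
corresponding to the vertex $(n/(n+1),\,1/(n+1))$ of $\mathbf{L}_n'$. I would embed $\sigma$ into an analytic family of distributions: for $\operatorname{Re} z > 0$ set
\begin{equation}
K^z(x) = \frac{(1-\lvert x\rvert^2)_+^{z-1}}{\Gamma(z)},
\end{equation}
and extend meromorphically in $z$. At $z=0$ one recovers (up to a constant) the surface measure $\sigma$, so $A_1 f = c\, f \ast K^0$. Two control estimates are needed. For $\operatorname{Re} z$ sufficiently large, $K^z$ is a bounded function, so by Young's inequality $f \mapsto f\ast K^z$ maps $L^1 \to L^\infty$ with operator norm of admissible growth in $\lvert \operatorname{Im} z\rvert$. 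For $\operatorname{Re} z$ sufficiently negative, the Fourier transform $\widehat{K^z}(\xi)$ can be computed via Bessel asymptotics, giving $\lvert \widehat{K^z}(\xi)\rvert \lesssim (1+\lvert\xi\rvert)^{-(n-1)/2 - \operatorname{Re} z}$; choosing $\operatorname{Re} z = -(n-1)/2$ yields a bounded multiplier, hence an $L^2 \to L^2$ bound, with acceptable growth on vertical strips.

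With these two strip estimates in hand, Stein's theorem of complex interpolation applied at $z=0$ produces the corner estimate above, since the choice of $\operatorname{Re} z$ that lands at $0$ selects precisely the exponents $(p,q) = ((n+1)/n, n+1)$.

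Finally, Riesz--Thorin interpolation between the corner and the two trivial diagonal vertices $(0,0)$ and $(1,1)$ fills in the closed triangle $\mathbf{L}_n'$. The main obstacle is the analytic interpolation step: one must control the Fourier transform of $K^z$ uniformly on vertical strips and verify the admissible growth conditions required by Stein's theorem; the $L^2$ bound in particular depends on the sharp Bessel function decay $\lvert \widehat{\sigma}(\xi)\rvert \lesssim (1+\lvert\xi\rvert)^{-(n-1)/2}$, which is the source of the exponent $n/(n+1)$ in the corner of $\mathbf{L}_n$.
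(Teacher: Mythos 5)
This statement is imported by the paper as a prior result (Theorem~\ref{t:LS}), attributed to Littman \cite{MR0358443} and Strichartz \cite{MR0256219}; the paper contains no proof of it, so there is no internal argument to compare against. Your proposal is correct, and it is essentially a reconstruction of Strichartz's original complex-interpolation proof: embed $\sigma$ in the analytic family $K^z=(1-\lvert x\rvert^2)_+^{z-1}/\Gamma(z)$, which recovers a multiple of $d\sigma$ at $z=0$; use the trivial kernel bound on one vertical line, the Bessel-asymptotic multiplier bound $\lvert \widehat{K^z}(\xi)\rvert\lesssim (1+\lvert\xi\rvert)^{-(n-1)/2-\operatorname{Re}z}$ on the line $\operatorname{Re}z=-(n-1)/2$, and Stein's analytic interpolation at $z=0$ to obtain the vertex estimate $L^{(n+1)/n}\mapsto L^{n+1}$; then Riesz--Thorin against the Young's-inequality diagonal fills in the closed triangle $\mathbf{L}_n'$. (Littman's route is different --- oscillatory-integral estimates for the wave propagator --- but yields the same vertex.) Two small points of care. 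First, to land exactly on the sharp vertex $(\frac{n}{n+1},\frac1{n+1})$ you must take the right edge of the strip to be precisely $\operatorname{Re}z=1$, where $\lvert (1-\lvert x\rvert^2)_+^{z-1}\rvert\le 1$ gives the $L^1\mapsto L^\infty$ bound; with ``$\operatorname{Re}z$ sufficiently large'', as you wrote, the interpolation parameter $\theta$ solving $0=(1-\theta)a_1-\theta\frac{n-1}{2}$ produces strictly weaker exponents whenever $a_1>1$. Second, the admissible-growth hypothesis in Stein's theorem does hold here, since $1/\Gamma(z)$ and the constants in the Bessel asymptotics grow at most like $e^{c\lvert\operatorname{Im}z\rvert}$ on the strip; as you note, the $L^2$ line is exactly where the sharp decay $\lvert\widehat{d\sigma}(\xi)\rvert\lesssim(1+\lvert\xi\rvert)^{-(n-1)/2}$ enters --- the same decay the paper records in \eqref{e:sig} and uses for its continuity estimates.
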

%%%%%%%%%%%%%%%%%%%%%%%%%%%%%% THEOREM THEOREM THEOREM

The inequality strengthens as $ s$ increases. In particular the critical case is vertex  $  (\tfrac 1r, \tfrac 1s)= (\frac n {n+1},  \frac 1 {n+1})$.  
The improvement is a `continuity' condition, namely the inequality is preserved, with a small gain, under small translations. 
Let $ \tau _y f (x) =f (x-y)$ be the translation of $ f$ by $ y$.   

%%%%%%%%%%%%%%%%%%%%%%%%%%%%%% THEOREM THEOREM THEOREM
\begin{theorem}\label{t:littman}  
Let $ \mathbf L'_n$ be the closed triangle  with vertexes $ (0,0), (1,1)$ and $ (\frac n {n+1}, \frac 1 {n+1})$. 
For $ (\frac 1{r}, \frac1{s}) $ in the interior of $ \mathbf L'_n $  we have the inequalities
\begin{equation}\label{e:littman}
\lVert A_1   - \tau _y A_1  : L ^{r} \mapsto L ^{s} \rVert 
\lesssim \lvert  y\rvert ^{\eta }, \qquad   \lvert  y\rvert \leq 1   , 
\end{equation}
for a choice of $ \eta = \eta (n, r,s)>0$.  
\end{theorem}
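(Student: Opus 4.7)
The plan is to derive \eqref{e:littman} by complex interpolation between two a priori estimates for the difference operator $B_y := A_1 - \tau_y A_1$: (i) an $L^2 \to L^2$ H\"older-in-$y$ bound coming from stationary phase and Plancherel, and (ii) the Littman--Strichartz $L^p$-improving estimate of Theorem~\ref{t:LS}, applied at a suitable boundary point of $\mathbf{L}'_n$.

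\emph{Step 1 ($L^2 \to L^2$ H\"older bound).} Since $A_1 f = f \ast \sigma$, the operator $B_y$ is convolution with the signed measure $\sigma - \tau_y \sigma$, and hence a Fourier multiplier with symbol
\[
m_y(\xi) = \bigl(1 - e^{-2\pi i y \cdot \xi}\bigr)\,\widehat\sigma(\xi).
\]
Combining the classical stationary-phase bound $\lvert \widehat\sigma(\xi)\rvert \lesssim (1+\lvert\xi\rvert)^{-(n-1)/2}$ with the elementary estimate $\lvert 1 - e^{-2\pi i y \cdot \xi}\rvert \leq 2\min(1, \pi\lvert y\rvert\lvert\xi\rvert)$, and optimizing the two regimes at $\lvert\xi\rvert \sim 1/\lvert y\rvert$, I would obtain
\[
\lVert m_y \rVert_{L^\infty} \lesssim \lvert y\rvert^{\eta_1}, \qquad \eta_1 := \min\bigl\{1, (n-1)/2\bigr\} > 0,
\]
and therefore $\lVert B_y : L^2 \to L^2 \rVert \lesssim \lvert y\rvert^{\eta_1}$ by Plancherel.

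\emph{Step 2 (Interpolation).} Fix $(1/r, 1/s)$ in the interior of $\mathbf{L}'_n$. The point $(1/2, 1/2)$ lies on the diagonal edge of $\mathbf{L}'_n$ (joining $(0,0)$ and $(1,1)$), whereas any interior point satisfies $1/r > 1/s$ strictly (the third vertex $(n/(n+1), 1/(n+1))$ lies on that side of the diagonal). By convexity, the ray from $(1/2, 1/2)$ through $(1/r, 1/s)$ exits $\mathbf{L}'_n$ at some point $(1/r_0, 1/s_0)$ on one of the two non-diagonal edges, giving
\[
(1/r, 1/s) = (1-\theta)(1/r_0, 1/s_0) + \theta (1/2, 1/2), \qquad \theta = \theta(r, s) \in (0, 1).
\]
Theorem~\ref{t:LS} applied at $(1/r_0, 1/s_0) \in \mathbf{L}'_n$ gives $\lVert B_y : L^{r_0} \to L^{s_0}\rVert \leq 2\lVert A_1 : L^{r_0} \to L^{s_0}\rVert \lesssim 1$. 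Combining this with Step~1 by the Riesz--Thorin theorem yields
\[
\lVert B_y : L^{r} \to L^{s}\rVert \lesssim \lvert y\rvert^{\theta \eta_1},
\]
which is \eqref{e:littman} with $\eta = \theta \eta_1 > 0$.

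The only nontrivial input beyond classical harmonic analysis is the $L^\infty$ multiplier estimate for $m_y$, and the stationary-phase decay of $\widehat\sigma$ used there is as old as Littman--Strichartz itself. I therefore do not anticipate a serious obstacle; the point deserving care is purely geometric, namely that the pivot through $(1/2, 1/2)$ reaches every interior point of $\mathbf{L}'_n$ along a segment with both endpoints in $\mathbf{L}'_n$, which is immediate from convexity.
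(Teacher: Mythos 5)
Your proposal is correct and is essentially the paper's own argument: the paper likewise proves the $L^2\to L^2$ bound $\lVert A_1 - \tau_y A_1 : L^2 \mapsto L^2\rVert = \lVert (1-e^{iy\cdot\xi})\widehat{d\sigma}(\xi)\rVert_\infty \lesssim \lvert y\rvert^{\eta_0}$ via Plancherel and the stationary-phase decay of $\widehat{d\sigma}$, and then interpolates with the Littman--Strichartz estimates of Theorem~\ref{t:LS} to cover the interior of $\mathbf L'_n$. You have merely made explicit the details the paper leaves implicit, namely the value $\eta_1=\min\{1,(n-1)/2\}$ and the convex-geometry bookkeeping for the Riesz--Thorin step, both of which are fine.
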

%%%%%%%%%%%%%%%%%%%%%%%%%%%%%% THEOREM THEOREM THEOREM

A proof is presented in \S \ref{s:continuity}.   
We need a scale invariant version of the inequalities above, which is very easy to prove by a change of variables.

%%%%%%%%%%%%%%% Figure 
\begin{figure}

\begin{tikzpicture}

\begin{scope}[scale=1.5]
\draw[->,thick] (-.5,0) -- (2.5,0) node[below] {$ \frac1r$} ;
\draw[->,thick] (0,-.5) -- (0,2.75) node[left] {$ \frac1s$}; 
\filldraw[gray, opacity=.2] (0,2) -- (2,0)   --  (1.8,1.8)  -- (0,2) ; %
\draw (1.8,1.8) node[above] { $  (\frac n {n+1},\frac n {n+1})$}; 
\draw (2,0) node[below] {1}; \draw (0,2) node[left] {1}; 
\draw (1.4,1.4) node {$ L_n$}; 

\begin{scope}[xshift=4.5cm] 
\draw[->,thick] (-.5,0) -- (2.5,0) node[below] {$ \frac1r$} ;
\draw[->,thick] (0,-.5) -- (0,2.75) node[left] {$ \frac1s$}; 
\filldraw[gray, opacity=.2] (0,0) -- (2,2)   --  (1.8,.2)  -- (0,0);  
\draw (2,2) node[above] {$(1,1)$}; 
\draw (1.8,.2) node[right] {$ (\frac n {n+1},\frac 1 {n+1})$}; 
\draw (1.4,.6) node {$ L_n'$}; 
\end{scope}

\end{scope}

\end{tikzpicture}

\caption{The triangle $ L_n$ on the left, and $ L'_n $ on the right.}
\label{f:L}
\end{figure}
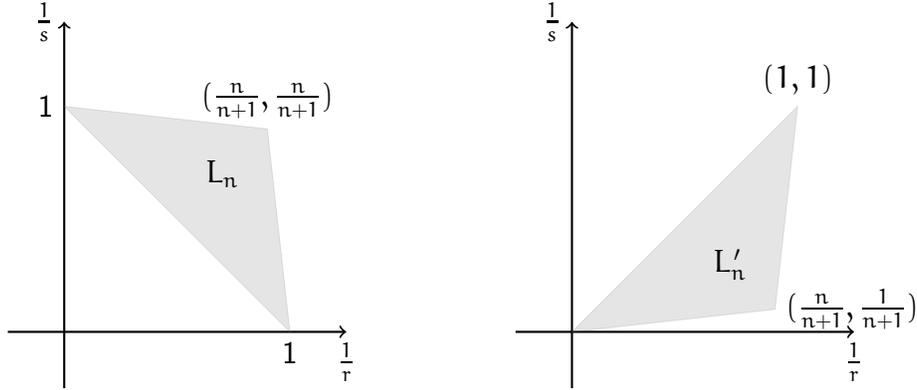
%%%%%%%%%%%%%%% Figure

%%%%%%%%%%%%%%%%%%%%%%%%%%%%%% LEMMA LEMMA LEMMA
\begin{lemma}\label{l:lac_scale} Let $ f_1 , f_2$ be supported on a cube $ Q$, and let $t \simeq \ell Q$.  
For $ (\frac1r, \frac1s)$ as in  the interior of $\mathbf L_n$,  there holds 
\begin{equation}\label{e:littman2}
\lvert  \langle  A_t f  -  A_t \tau _y f_1 , f_2  \rangle \rvert \lesssim   \lvert  y/ \ell Q \rvert ^{\eta } 
 \lvert  Q\rvert \langle f_1  \rangle_ {Q,r}   \langle f_2  \rangle_ {Q,s} \qquad  \lvert  y\rvert \leq \ell Q.     
\end{equation}
\end{lemma}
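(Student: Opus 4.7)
The plan is to reduce the statement to Theorem~\ref{t:littman} at unit scale by an affine change of variables, together with the duality between $\mathbf L_n$ and $\mathbf L'_n$.

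Let $x_Q$ denote the center of $Q$, set $\lambda = t / \ell Q$ (so $\lambda \simeq 1$ by hypothesis), and put $g_i(u) := f_i(x_Q + \ell Q \cdot u)$ for $i = 1,2$. Each $g_i$ is then supported in a fixed unit cube, and a direct computation gives the normalization
\[
\|g_i\|_r = (\ell Q)^{-n/r} \|f_i\|_r = \langle f_i \rangle_{Q,r},
\]
and likewise for $g_2$ with exponent $s$. Performing the change of variables $x = x_Q + \ell Q \cdot u$ in the bilinear pairing, and using the identities $A_t f_1(x_Q + \ell Q u) = A_\lambda g_1(u)$ and $A_t \tau_y = \tau_y A_t$ (which rescales $\tau_y$ to $\tau_{y'}$, where $y' := y / \ell Q$ satisfies $|y'| \leq 1$), one arrives at
\[
\langle (A_t - A_t \tau_y) f_1, f_2 \rangle = (\ell Q)^n \, \langle (A_\lambda - A_\lambda \tau_{y'}) g_1, g_2 \rangle.
\]

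Next I would invoke Theorem~\ref{t:littman}. By the paper's definition of $\mathbf L'_n$, the condition $(1/r, 1/s) \in \mathbf L_n$ is exactly equivalent to $(1/r, 1/s') \in \mathbf L'_n$, so dualizing Theorem~\ref{t:littman} gives
\[
\bigl|\langle (A_1 - A_1 \tau_{y'}) g_1, g_2 \rangle\bigr| \lesssim |y'|^{\eta} \|g_1\|_r \|g_2\|_s.
\]
Because $\lambda$ lies in a fixed compact subset of $(0, \infty)$, the intertwining $A_\lambda = D_{\lambda^{-1}} A_1 D_\lambda$ with $D_\lambda f(x) = f(\lambda x)$ shows that the same bound holds with $A_\lambda$ in place of $A_1$, uniformly in $\lambda \simeq 1$. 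Inserting this into the identity above and using $(\ell Q)^n = |Q|$ together with the normalizations of $\|g_1\|_r$ and $\|g_2\|_s$ produces the desired estimate with the geometric factor $|y/\ell Q|^\eta$.

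The argument is essentially bookkeeping of the change of variables, and no substantive obstacle arises: Theorem~\ref{t:littman} already supplies the positive $\eta$, and the dilation invariance of the family $\{A_\lambda : \lambda \simeq 1\}$ handles the passage from $A_1$ to $A_\lambda$ at no cost. The only point requiring care is confirming the equivalence of $(1/r, 1/s) \in \mathbf L_n$ with $(1/r, 1/s') \in \mathbf L'_n$ encoded in the paper's definition, so that duality applies on the full interior of the relevant region.
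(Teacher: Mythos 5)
Your proof is correct and is exactly the argument the paper intends: the paper gives no written proof of Lemma~\ref{l:lac_scale} beyond the remark that it follows from Theorem~\ref{t:littman} "by a change of variables," and your rescaling, the duality identification of $\mathbf L_n$ with $\mathbf L'_n$ via $(1/s)\mapsto(1/s')$, and the dilation intertwining supply precisely those details. The only micro-point to note is that when $\lambda = t/\ell Q < 1$ the rescaled shift $y'/\lambda$ may slightly exceed $1$, which is harmless because \eqref{e:littman} extends to $\lvert y\rvert \leq C$ with a constant depending on $C$ (for instance by telescoping the translation into boundedly many steps of length at most one).
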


We set some notation for the statement of the main lemma. 
For a cube $ Q$ with side length $ 2 ^{q}$, for $ q\in \mathbb Z $, let 
\begin{equation*}
A_Q f = A _{2 ^{q-2}} (f \mathbf 1_{\frac 13 Q}).  
\end{equation*}
It is important to the proof below that the support of $ A_Q f $ is contained in $ Q$.  
There are a choice of $ 3 ^{n}$ dyadic grids $ \mathcal D_1 ,\dotsc, \mathcal D _{3 ^{n}} $ so that 
\begin{equation*}
A _{2 ^{q-2}} f = \sum_{t=1} ^{3 ^{n} } \sum_{Q \in \mathcal D_t : \ell Q = 2 ^{q}} A_Q f.  
\end{equation*}
Therefore, it suffices to prove the sparse bound for each of the maximal operators 
\begin{equation} \label{e:MD}
M _{\mathcal D_t} f := 
\sup _{Q\in \mathcal D_t} A_Q f , \qquad  1\leq t \leq  3 ^{n}.  
\end{equation}
The specific dyadic grid in question is immaterial, so we fix such a grid below, and write $ \mathcal D = \mathcal D_t$.  
This is the kernel of the proof.   Notice that the first function is an  indicator function.

%%%%%%%%%%%%%%%%%%%%%%%%%%%%%% LEMMA LEMMA LEMMA
\begin{lemma}\label{l:lac_sparse} 
Let $ 1< r ,s  < \infty $ be as in Theorem \ref{t:lac},  and let $ C_0 >1$ be a constant.   
Let  $ f_1 = \mathbf 1_{F_1}$.  
Let $ \mathcal Q$ be a collection of sub cubes of $ Q_0 \in \mathcal D$ for which there holds 
\begin{equation}
\sup _{Q'\in \mathcal Q} \sup _{Q \;:\; Q'\subset Q\subset Q_0}   \Bigl\{\frac { \langle f_1  \rangle_{Q,r}} 
{\langle f_1  \rangle_{Q_0,r}} +  \frac { \langle f_2  \rangle_{Q,s}} 
{\langle f_2  \rangle_{Q_0,s}} \Bigr\} < C_0. 
\end{equation}
Then, there holds  
\begin{equation}\label{e:lac_sparse}
\Bigl\langle  \sup _{Q\in \mathcal Q}  A_Q f_1 ,f_2 \Bigr\rangle 
\lesssim \lvert  Q_0\rvert \langle f_1  \rangle_{Q_0,r} \langle f_2 \rangle_{Q_0,s}.     
\end{equation}
\end{lemma}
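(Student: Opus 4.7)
The plan is to linearize the maximal operator over $\mathcal Q$ and decompose $A_Q$ via a small ball-smoothing powered by the continuity estimate in Lemma~\ref{l:lac_scale}, controlling the resulting pieces by the Hardy--Littlewood maximal function and by a continuity gain, respectively. Begin by choosing a measurable selector $x\mapsto Q(x)\in\mathcal Q$ with $A_{Q(x)}f_1(x)\ge\tfrac12\sup_{Q\in\mathcal Q}A_Qf_1(x)$ and setting $E_Q=\{x:Q(x)=Q\}$; the $E_Q$ are pairwise disjoint subsets of $Q_0$, reducing the claim to a bound on $\sum_{Q\in\mathcal Q}\langle A_Qf_1,\,f_2\mathbf 1_{E_Q}\rangle$.

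For a small parameter $\rho\in(0,\tfrac14)$ let $\varphi^Q_\rho$ be the $L^1$-normalized indicator of the ball of radius $\rho\,\ell Q$, set $\Phi_Q f=f\ast\varphi^Q_\rho$, and split $A_Q=A_Q\Phi_Q+(A_Q-A_Q\Phi_Q)$. The convolution kernel of $A_Q\circ\Phi_Q$ is supported in an annulus of radius $\simeq\ell Q$ and thickness $\simeq\rho\,\ell Q$ and hence has $L^\infty$-norm $\lesssim\rho^{-1}(\ell Q)^{-n}$, so pointwise $A_Q\Phi_Q f_1\lesssim\rho^{-1}M_{\textup{HL}}f_1$. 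Summing, using the classical $(1,1)$ sparse bound for $M_{\textup{HL}}$, the $C_0$ stopping hypothesis on ancestors of $\mathcal Q$-cubes, the indicator identity $\langle f_1\rangle_{Q_0,1}\le\langle f_1\rangle_{Q_0,r}$, and H\"older's inequality $\langle f_2\rangle_{Q_0,1}\le\langle f_2\rangle_{Q_0,s}$, the smoothed contribution is
\[
\sum_Q\langle A_Q\Phi_Qf_1,\,f_2\mathbf 1_{E_Q}\rangle\ \lesssim\ \rho^{-1}\,C_0^{2}\,|Q_0|\,\langle f_1\rangle_{Q_0,r}\,\langle f_2\rangle_{Q_0,s}.
\]

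For the remainder, $\Phi_Q f_1$ is the average over $|y|\le\rho\,\ell Q$ of $\tau_y f_1$, and Lemma~\ref{l:lac_scale} at an interior pair $(\tfrac1{\tilde r},\tfrac1{\tilde s})$ chosen with $\tilde r\le r$ and $\tilde s<s$ yields
\[
\bigl|\langle(A_Q-A_Q\Phi_Q)f_1,\,f_2\mathbf 1_{E_Q}\rangle\bigr|
\ \lesssim\ \rho^{\eta}\,|Q|\,\langle f_1\rangle_{Q,\tilde r}\,\langle f_2\mathbf 1_{E_Q}\rangle_{Q,\tilde s}.
\]
The indicator identity $\langle f_1\rangle_{Q,p}=\langle f_1\rangle_{Q,1}^{1/p}$ together with the $C_0$ hypothesis transfers to $\langle f_1\rangle_{Q,\tilde r}\lesssim\langle f_1\rangle_{Q_0,r}$, while H\"older applied to the second factor produces a density weight $(|E_Q|/|Q|)^{\beta}$ with $\beta=\tfrac1{\tilde s}-\tfrac1{s}>0$, the positivity being an essential consequence of the strict interior of $(1/r,1/s)$ in $\mathbf L_n$. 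Stratifying $\mathcal Q$ by density $|E_Q|/|Q|\simeq2^{-m}$ and combining the disjointness bound $\sum_{Q\in\mathcal Q_m}|Q|\lesssim2^{m}|Q_0|$ with the gain $2^{-m\beta}$ and the continuity factor $\rho^\eta$, a careful balance yields a bound $\lesssim\rho^\eta\,|Q_0|\,\langle f_1\rangle_{Q_0,r}\langle f_2\rangle_{Q_0,s}$. Fixing a single $\rho\in(0,1)$ absorbs the $\rho^{-1}$ loss from the smoothed piece against $\rho^\eta$ from the remainder.

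The principal obstacle is the summation in the remainder step: the cubes $Q\in\mathcal Q$ are a priori not sparse, so $\sum_Q|Q|$ is unbounded. The argument must simultaneously exploit the indicator structure of $f_1$ (allowing free shifting of the exponent $r$ to $\tilde r$), the strict-interior density exponent $\beta>0$ (available only for $(1/r,1/s)$ an interior point of $\mathbf L_n$), and the continuity exponent $\eta>0$ of Theorem~\ref{t:littman}; it is precisely this confluence of structural hypotheses that reduces the sparse bound to the known $L^p$-improving of spherical averages.
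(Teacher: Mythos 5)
Your linearization and your smoothed term are fine (in fact you do not need the Hardy--Littlewood sparse bound there: the kernel bound gives $A_Q\Phi_Q f_1\lesssim \rho^{-1}\langle f_1\rangle_{Q,1}\leq \rho ^{-1}C_0\langle f_1\rangle_{Q_0,r}$ directly from the stopping hypothesis applied at $Q$ itself, and disjointness of the $E_Q$ finishes). The fatal gap is the remainder summation, and your own arithmetic exposes it. H\"older gives exactly $\langle f_2\mathbf 1_{E_Q}\rangle_{Q,\tilde s}\leq \langle f_2\rangle_{Q,s}\,( \lvert E_Q\rvert/\lvert Q\rvert ) ^{\beta }$ with $\beta = \tfrac1{\tilde s}-\tfrac 1s$, and since $\tilde s\geq 1$ and $s<\infty$ one always has $\beta<1$. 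Hence on the stratum $\lvert E_Q\rvert/\lvert Q\rvert\simeq 2^{-m}$ your two ingredients combine to $2^{-m\beta}\cdot 2^{m}\lvert Q_0\rvert = 2^{m(1-\beta)}\lvert Q_0\rvert$, which \emph{diverges} when summed over $m$; no interior choice of $(\tfrac1{\tilde r},\tfrac1{\tilde s})$ helps. Nor can a stratum-dependent $\rho$ rescue this: a single mollification yields only the constant gain $\rho^{\eta}$, and any gain you buy on the remainder is repaid with the matching loss $\rho^{-1}$ on the smoothed piece. A concrete counterexample to your claimed summation: take $\mathcal Q$ to be all dyadic subcubes of $Q_0$ over $K$ scales with $\lvert E_Q\rvert=\lvert Q\rvert/K$; then $\sum_{Q}\lvert E_Q\rvert=\lvert Q_0\rvert$ but $\sum_Q \lvert Q\rvert (\lvert E_Q\rvert/\lvert Q\rvert)^{\beta}=K^{1-\beta}\lvert Q_0\rvert\to\infty$.

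The missing idea, which is how the paper proceeds, is to decompose $f_1$ rather than $A_Q$: a Calder\'on--Zygmund decomposition $f_1=g_1+b_1$ adapted to the maximal cubes $\mathcal B$ where the stopping condition \eqref{e:bad} fails, with $b_1=\sum_k B_{1,q-k}$ organized by scale as in \eqref{e:bad1}. The good part is bounded and is handled by disjointness of the $F_Q$, exactly as your smoothed term. For the bad part, the mean-zero property of $B_{1,q-k}$ on each $P\in\mathcal B(q-k)$ is converted, by averaging over translations $y$ of size $2^{q-k}$ applied to $A_Q^{\ast}f_Q$, into the continuity estimate, producing decay $2^{-\eta k}$ tied to the \emph{scale separation} $k$ between $Q$ and the bad cubes (see \eqref{e:zcont}) --- this is where the gain sums geometrically, in contrast to your one-shot $\rho^\eta$. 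Crucially, for fixed $k$ the pieces $B_{1,q-k}\mathbf 1_Q$ have pairwise disjoint supports as $Q$ varies, since each bad cube has exactly one ancestor of each scale; this orthogonality on the $f_1$ side, together with disjointness of the $F_Q$, the indicator structure of $f_1$, and $1/r+1/s\geq 1$, is what makes the sum over the non-sparse family $\mathcal Q$ converge in \eqref{e:SQ}. Your remainder $(A_Q-A_Q\Phi_Q)f_1$ sees all of $f_1\mathbf 1_{\frac13 Q}$ at every $Q$, fully overlapping across scales, so the density exponent $\beta<1$ is the only gain available --- and, as shown above, it is insufficient.
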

%%%%%%%%%%%%%%%%%%%%%%%%%%%%%% LEMMA LEMMA LEMMA

%%%%%%%%%%%%%%%%%%%%%%%%%%%%%% PROOF PROOF PROOF
\begin{proof}
The supreumum is linearized. Thus, for pairwise disjoint sets  $ \{ F_ Q \;:\; Q\in \mathcal Q\} $ with $ F_Q \subset Q$, set $f_Q = f_2 \mathbf{1}_{F_Q}$.  We estimate 
\begin{align} \label{e:Lam}
\sum_{Q\in \mathcal Q} \langle   A_Q f_1 , \mathbf 1_{F_Q} f_2 \rangle . 
\end{align}

We take $ \mathcal B$ to be the maximal dyadic subcubes of $ Q_0$ so that  we have 
\begin{equation}\label{e:bad}
\max  \Bigl\{  \frac{\langle f_1  \rangle_{Q,r}} {\langle f_1  \rangle_{Q_0,r}},\
 \frac{\langle f_2  \rangle_{Q,s}} {\langle f_2  \rangle_{Q_0,s}} \Bigr\}  > 2 C_0   .  
\end{equation}
Perform a standard Calder\'on-Zygmund decomposition on $f_1$.  
Set $ f_1 = g_1 + b_1$ where 
\begin{equation} \label{e:bad1}
b_1 = \sum_{P\in \mathcal B} (f_1 - \langle f_1 \rangle_P) \mathbf 1_{P} = \sum_{k = - \infty } ^{q_0 -1} 
\sum_{P\in \mathcal B (k)}  (f_1 - \langle f_1 \rangle_P) \mathbf 1_{P}  =: 
 \sum_{k = - \infty } ^{q_0 -1} B _{1,k} , 
\end{equation}
where above we write $ \ell Q_0 = 2 ^{q_0}$, and set $ \mathcal B (k) = \{P\in \mathcal B : \ell P= 2 ^{k}\}$.  

The bilinear expression in \eqref{e:Lam} is dominated by a sum of  two terms.  The first places the good function $ g_1$ in the first place. It is a bounded function, so that
\begin{equation*}
 \sum_{Q\in \mathcal Q} \lvert  \langle   A_Q g_1 , f_Q \rangle \rvert \lesssim  
\sum_{Q\in \mathcal Q}  \lVert f_2 \mathbf 1_{F_Q}\rVert_1  \lesssim \lvert  Q_0\rvert.    
\end{equation*}
This just depends upon the disjointness of the sets $ F_Q$.  

\smallskip 

The second has $ b_1$ in the first position.  
We have this following easy, but essential, fact:  For all $ Q\in \mathcal Q$ and $ P\in \mathcal B$, if $ Q\cap P \neq \emptyset $, then $ P\subsetneq Q$.  Therefore, for any $ Q\in \mathcal Q$, with $ \ell Q = 2 ^{q}$, 
we have, using the notation of \eqref{e:bad1}, 
\begin{equation*}
 \langle   A_Q b_1 , f_Q \rangle= \sum_{k \;:\; k<q} 
  \langle   A_Q B _{1,k} , f_Q \rangle = \sum_{k=1} ^{\infty }  \langle   A_Q B _{1,q-k} , f_Q \rangle. 
\end{equation*}
Therefore, 
\begin{align*}
\Bigl\lvert 
 \sum_{Q\in \mathcal Q} \langle   A_Q b_1 , f_Q \rangle   
\Bigr\rvert&\leq 
\sum_{k=1} ^{\infty }\sum_{Q\in \mathcal Q}\lvert   \langle   A_Q B_{1, q-k} , f_Q \rangle\rvert , 
& (\ell Q = 2 ^{q})
\end{align*}

We achieve the desired bound, with geometric decay in $ k$, derived from our continuity inequalities.    For $Q\in \mathcal{Q}$, with $\ell Q=2^q$, we estimate as follows, using the mean zero properties of the bad functions. 
\begin{align}
\lvert  \langle   A_Q B_{1, q-k} &, f_Q\rangle 
= \lvert  \langle    B_{1, q-k} , A_Q ^{\ast} f_Q\rangle \rvert  
\\& 
= \sum _{P\in \mathcal B (q-k)}
\frac 1 { \lvert P\rvert} 
\Bigl \lvert \int_P \int_P 
[ A_Q ^{\ast}  f_Q (x) - A_Q ^{\ast} f_Q (x')]
 \cdot    B _{1,q-k} (x)  \; dx \, dx'\Bigr\rvert 
 \\
 & \lesssim 
\frac 1 { \lvert P_0\rvert} 
\Bigl \lvert  \int
[ A_Q ^{\ast}  f_Q (x) -  \tau_y A_Q ^{\ast} f_Q (x)]
 \cdot    B _{1,q-k} (x)  \; dx \Bigr\rvert  \, dy
 \\  \label{e:zcont}
 & \lesssim 2 ^{- \eta k}  \lvert  Q\rvert  
\langle B _{1,q-k} \mathbf 1_{Q} \rangle _{Q, r} 
 \langle f_Q \rangle_{Q, s}  . 
\end{align}
Above, $ P_0$ is the cube of side  length $ 2 ^{q-k+1}$ centered at the origin, and we use our continuity inequality \eqref{e:littman}. 

It remains to argue that uniformly in $ k \geq 1$, 
\begin{equation} \label{e:SQ}
\sum_{q}\sum_{\substack{Q\in \mathcal Q \\ \ell Q= 2 ^{q} }} 
 \lvert  Q\rvert  
\langle B _{1,q-k} \mathbf 1_{Q} \rangle _{Q, r} 
\langle f_2 \mathbf 1_{F_Q} \rangle_{Q, s}  \lesssim  \langle f_1 \rangle _{Q_0, r} \langle f_2\rangle _{Q_0, s} \lvert  Q_0\rvert.  
\end{equation}
This follows from the  (a) disjointness of the sets $ F_Q$,  (b) the disjointness of the supports of $ B _{1,k} \mathbf 1_{Q}$, for $ k\geq 1$ fixed, and (c) $ 1/r+1/s\geq 1$.  
It is at this point that we need the $ f_1$  to be and  indicator function. 

Fix the integer $ k$.  Dominate 
\begin{equation*}
\lvert   B _{1,q-k}  \rvert  \lesssim \langle f_1 \rangle _{Q_0,r} \mathbf 1_{E_q} 
+ \mathbf 1_{F _{1, q}}, 
\end{equation*}
where the $ E_q = E _{q,k}$ are pairwise disjoint sets in $ Q_0$ as $ q$ varies, 
and $ F _{1,q} = F _{1,q,k}$ are pairwise disjoint sets in $ F_1$.  
This leaves us two terms to control. 

The first term requires us  to  show that 
\begin{equation}\label{e:SQ1}
 \langle f_1 \rangle _{Q_0,r} 
\sum_{q}\sum_{\substack{Q\in \mathcal Q \\ \ell Q= 2 ^{q} }} 
 \lvert  Q\rvert  
\langle  \mathbf 1_{E_q} \rangle _{Q, r} 
\langle f_2 \mathbf 1_{F_Q} \rangle_{Q, s}  \lesssim  \langle f_1 \rangle _{Q_0, r} \langle f_2\rangle _{Q_0, s} \lvert  Q_0\rvert.  
\end{equation}
Notice that the term $  \langle f_1 \rangle _{Q_0,r}$ appears on both sides.  
 Now, this is easy to see from H\"older's inequality   for $ 1/r + 1/s=1$.  
 In the case that $ 1/r + 1/s = 1+ \tau >1$, set $ 1/ \dot r = 1/r - \tau  $.  Then,   
 $ 1/ \dot r + 1/s=1$, and  \( r < \dot r\), so that  
\begin{equation*}
\langle  \mathbf 1_{E_q} \rangle _{Q, r} 
\langle f_2 \mathbf 1_{F_Q} \rangle_{Q, s}  
\lesssim  
\langle  \mathbf 1_{E_q} \rangle _{Q, \dot r} 
\langle f_2 \mathbf 1_{F_Q} \rangle_{Q, s} 
\end{equation*} 
But, then \eqref{e:SQ1} follows from the case of duality,

The second term requires us  to  show that 
\begin{equation}\label{e:SQ2}
\sum_{q}\sum_{\substack{Q\in \mathcal Q \\ \ell Q= 2 ^{q} }} 
 \lvert  Q\rvert  
\langle  \mathbf 1_{F_{1,q}}  \rangle _{Q, r} 
\langle f_2 \mathbf 1_{F_Q} \rangle_{Q, s}  \lesssim  \langle f_1 \rangle _{Q_0, r} \langle f_2\rangle _{Q_0, s} \lvert  Q_0\rvert.   
\end{equation}
The inequality holds in the case of $ 1/r+1/s=1$. 
For $ 1/r + 1/s = 1+  \tau $, define $ \dot r$  as above.  
The point is then that
\begin{equation*}
\langle  \mathbf 1_{F_{1,q} } \rangle _{Q, r} 
\langle f_2 \mathbf 1_{F_Q} \rangle_{Q, s} \lesssim 
\langle  \mathbf 1_{F_{1} } \rangle _{Q_0} ^{\tau } 
\cdot 
\langle  \mathbf 1_{F_{1,q} } \rangle _{Q, \dot r} 
\langle f_2 \mathbf 1_{F_Q} \rangle_{Q, s'}   
\end{equation*}
where we have used the  the stopping condition \eqref{e:bad}.  
From this, we conclude \eqref{e:SQ2}, and so \eqref{e:SQ}. The proof is complete. 
 (The stopping condition is non linear, preventing a general interpolation argument at this point. That is why we passed to indicator sets.) 

\end{proof}
%%%%%%%%%%%%%%%%%%%%%%%%%%%%%% PROOF PROOF PROOF

%%%%%%%%%%%%%%%%%%%%%%%%%%%%%% PROOF PROOF PROOF
\begin{proof}[Proof of Theorem \ref{t:lac}]  
We deduce the m-sparse bound for the operator $ M _{\mathcal D}$ in \eqref{e:MD}.  
From this it follows that $M _{\textup{lac}}$ is bounded by the sum of a finite number of sparse forms. 
But, the principle described in \eqref{e:ONE}  shows that there is a constant $C$, so that given $f,g$, 
there is a fixed sparse form $\Lambda _{\mathcal{S}_0, r,s}$, so that 
\begin{equation}
\sup_{\mathcal{S}} \Lambda _{\mathcal{S}, r,s}(f,g)\leq C\Lambda _{\mathcal{S}_0, r,s}(f,g). 
\end{equation}
Thus, the sparse bound as claimed will follow. 

\smallskip 

The main line of the argument comes in two stages. The first stage is to prove the sparse 
bound for \( f_1 = \mathbf 1_{F_1}\), and the second stage is for $ f_1$ a general function.  
The beginning of both stages is the same.  
We can assume that $ f_1, f_2$ are bounded functions supported on a dyadic cube $Q_0 \in \mathcal D $. 
Indeed, we can even assume that for any cube $Q\supsetneqq Q_0$, we have $A_Q f \equiv 0$.  Namely, for the construction of the sparse bound, we need only consider cubes $Q\subset Q_0$.  

We then add the cube $ Q_0$ to $ \mathcal S$. We take the $\mathcal S$-children of $ Q_0$ to be the collection $ \mathcal E$ of maximal children $P\subsetneq  Q_0$ for which $ \langle f_1 \rangle _{P,r} > C_n  \langle f_1 \rangle _{Q_0,r} $, 
or $  \langle f_2 \rangle _{P, \sigma } > C_n  \langle f_2 \rangle _{Q_0, \sigma }$. Here $ \sigma >s$ should 
satisfy $ 1/r+ 1/ \sigma >1$, but is otherwise arbitrarily close to $ s$.   
  Let $ E$ be the union of these maximal children.  For a choice of constant $C_n>1$, we have $\lvert E\rvert < \tfrac 12 \lvert Q_0\rvert$.   Set 
  $ \mathcal Q = \{  P\subset Q_0 : P\not\subset E\}$.  Associated to the set $ Q_0$ we need the set 
\begin{equation*}
F_{Q_0} = \{ x\in Q_0 \;:\;    M _{\mathcal D} f (x) = \sup _{Q\in \mathcal Q} A_Q f (x)\}.  
\end{equation*}
We need to see that 
\begin{equation}\label{e:22PP}
\Bigl\langle \sup _{Q\in \mathcal Q} A_Q f (x), f_2 \mathbf 1_{F _{Q_0}}  \Bigr\rangle  
\lesssim 
\langle f_1 \rangle _{r, Q_0} 
  \langle f_2 \mathbf 1_{F _{Q_0}} \rangle _{\sigma , Q_0} \lvert  Q_0\rvert
\end{equation}
A straight forward recursion then completes the proof of the sparse bound.

\smallskip   
The second stage of the argument is to allow \( f_1\) to be an arbitrary bounded function. 
We can assume that  $ f_1 =  \sum_{k \in \mathbb Z } 2 ^{-k} \mathbf 1_{F _{1,k}}$, for disjoint sets $ F _{1,k}$.    
 Apply \eqref{e:22PP}  for each set $ F _{1,k}$.   
For each $ k\in \mathbb Z $, we get a sparse collection $ \mathcal S_k$ so that 
\begin{align}
\Bigl\langle \sup _{Q\in \mathcal Q} A_Q f (x), f_2 \mathbf 1_{F _{Q_0}}  \Bigr\rangle  
& \lesssim 
\sum_{k \in \mathbb Z }  2 ^{-k}
\sum_{S\in \mathcal S_k}  
\langle \mathbf 1_{F _{1,k}} \rangle _{r, S} 
  \langle f_2 \mathbf 1_{F _{Q_0}} \rangle _{\sigma , S} \lvert  S\rvert 
  \\
  & \lesssim 
    \langle f_2 \mathbf 1_{F _{Q_0}} \rangle _{\sigma , Q_0} 
\sum_{k \in \mathbb Z }  2 ^{-k}
\sum_{S\in \mathcal S_k}  
\langle \mathbf 1_{F _{1,k}} \rangle _{r, S} 
\lvert  S\rvert 
\\  \label{e:sumk}
& \lesssim  \lvert  Q_0\rvert 
 \langle f_2 \mathbf 1_{F _{Q_0}} \rangle _{\sigma , Q_0} 
\sum_{k \in \mathbb Z }  2 ^{-k}
\langle \mathbf 1_{F _{1,k}} \rangle _{\rho , Q_0}  . 
\end{align}
Above, we have again used the stopping condition to move the $ f_2$ term outside the sum, 
and then applied \eqref{e:CM}, where $ \rho > r$  satisfies $ 1/ \rho + 1/ \sigma >1$, and is arbitrarily 
close to $ r$.  It remains to bound the  sum over $ k$ in \eqref{e:sumk}. But note that  
we can compare to the $ L ^{\rho ,1}$ Lorentz norm: 
\begin{align*}
\sum_{k \in \mathbb Z }  2 ^{-k}
\langle \mathbf 1_{F _{1,k}} \rangle _{\rho , Q_0} 
\simeq 
\lVert   f_1 \;:\; L ^{\rho ,1} (Q_0 ; dx / \lvert  Q_0\rvert )\rVert  \lesssim \langle f_1 \rangle _{\rho ' ,Q}, 
\qquad 1 < \rho ' < \rho .  
\end{align*}
We can pass from $ L ^{\rho ,1}$ to $ L ^{\rho ' }$, since we are working on a probability space. 
And,  so a slightly weaker form of \eqref{e:22PP} holds.  But, that form is sufficient, since 
we claim sparse bounds in an open set. 
\end{proof}
%%%%%%%%%%%%%%%%%%%%%%%%%%%%%% PROOF PROOF PROOF

This is an elementary fact.  
%%%%%%%%%%%%%%%%%%%%%%%%%%%%%% PROPOSITION PROPOSITION PROPOSITION
\begin{proposition}\label{p:CM} Let $ \mathcal S$ be a collection of sparse subcubes of a fixed 
dyadic cube $ Q_0$, and let $ 1\leq s < t < \infty $. Then, for a bounded function $ \phi $,  
\begin{equation}\label{e:CM}
\sum_{Q\in \mathcal S} \langle \phi  \rangle _{s, Q} \lvert  Q\rvert \lesssim \langle \phi  \rangle _{t, Q_0} \lvert  Q_0\rvert  .       
\end{equation}
 
\end{proposition}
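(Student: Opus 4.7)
The plan is to reduce the estimate to the $L^t$ boundedness of a maximal function, exploiting sparsity in the standard way.

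First I would use the sparsity hypothesis directly: for each $Q \in \mathcal{S}$ there is a set $E_Q \subset Q$ with the sets $\{E_Q : Q \in \mathcal{S}\}$ pairwise disjoint and $|Q| < 4|E_Q|$. Hence
\begin{equation}
\sum_{Q \in \mathcal{S}} \langle \phi \rangle_{s,Q} |Q| \;\le\; 4 \sum_{Q \in \mathcal{S}} \langle \phi \rangle_{s,Q} |E_Q|.
\end{equation}

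Next, let $M_s^{\mathcal{D}}$ denote the dyadic maximal function over subcubes of $Q_0$, applied to $|\phi|^s$ and then raised to the $1/s$ power; that is, $M_s^{\mathcal{D}} \phi(x) = \sup_{x \in Q \subset Q_0} \langle \phi \rangle_{s,Q}$. For every $x \in E_Q \subset Q$, we have $\langle \phi \rangle_{s,Q} \le M_s^{\mathcal{D}}\phi(x)$. Using disjointness of the $E_Q$, the right side above is bounded by
\begin{equation}
4 \int_{Q_0} M_s^{\mathcal{D}} \phi(x) \, dx.
\end{equation}

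Now I would apply H\"older's inequality with exponents $t$ and $t'$:
\begin{equation}
\int_{Q_0} M_s^{\mathcal{D}} \phi \, dx \;\le\; |Q_0|^{1/t'} \, \| M_s^{\mathcal{D}} \phi \|_{L^t(Q_0)}.
\end{equation}
Since $t > s$, the dyadic maximal operator $M_s^{\mathcal{D}}$ is bounded on $L^t$ (this is just the $L^{t/s}$ boundedness of the standard dyadic maximal function applied to $|\phi|^s$), so
\begin{equation}
\| M_s^{\mathcal{D}} \phi \|_{L^t(Q_0)} \;\lesssim\; \|\phi\|_{L^t(Q_0)} \;=\; |Q_0|^{1/t} \langle \phi \rangle_{t, Q_0}.
\end{equation}
Combining these estimates gives the claimed bound, with implicit constant depending only on $s,t$.

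There is no real obstacle here; the only point worth flagging is that the hypothesis $s < t$ is used precisely to invoke $L^t$ boundedness of $M_s^{\mathcal{D}}$. The argument is the standard sparse-to-Carleson principle, and the same proof would work with any collection of cubes for which the Carleson packing condition $\sum_{Q \subset Q_0} |E_Q| \le |Q_0|$ holds.
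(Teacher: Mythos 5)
Your proof is correct, but it takes a different route from the paper's. The paper applies H\"older's inequality directly to the sum, writing $\lvert Q\rvert = \lvert Q\rvert^{1/t+1/t'}$ and splitting with exponents $t$ and $t'$; sparsity then controls the second factor through the packing bound $\sum_{Q\in\mathcal S}\lvert Q\rvert \le 4\sum_{Q\in\mathcal S}\lvert E_Q\rvert \le 4\lvert Q_0\rvert$, while the first factor $\bigl[\sum_{Q\in\mathcal S}\langle \lvert\phi\rvert^s\rangle_Q^{t/s}\lvert Q\rvert\bigr]^{1/t}$ is handled by the Carleson embedding inequality with exponent $t/s>1$. You instead spend the sparsity at the outset, passing to the disjoint sets $E_Q$ of proportional measure, dominating pointwise by the dyadic maximal operator $M_s^{\mathcal D}$, and only then applying H\"older on $Q_0$ together with the $L^{t/s}$ boundedness of the dyadic maximal function. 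The two arguments are equivalent in substance --- the Carleson embedding inequality is itself standardly proved by exactly your maximal-function mechanism, and both uses of H\"older are with the same pair $(t,t')$ --- so neither is stronger; your version has the merit of being self-contained, quoting only the dyadic maximal theorem, and it mirrors the technique the paper itself uses to prove Proposition~\ref{p:pp}, whereas the paper's version cleanly separates the packing condition from the embedding step. One caution about your closing remark: as written, your argument genuinely needs the pairwise disjoint major subsets $E_Q$, not merely a Carleson packing condition on the collection; for collections of dyadic cubes sparseness and Carleson packing are in fact equivalent, but that equivalence is a nontrivial theorem, and with only packing in hand one would have to argue as the paper does, via Carleson embedding, rather than via your pointwise domination.
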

%%%%%%%%%%%%%%%%%%%%%%%%%%%%%% PROPOSITION PROPOSITION PROPOSITION

%%%%%%%%%%%%%%%%%%%%%%%%%%%%%% PROOF PROOF PROOF
\begin{proof}
This is an instance of the Carleson embedding inequality, combined with the definition of sparsity. 
Below,  $ t' = \frac {t} {t-1}$ is the dual index to $ t$.   
\begin{align*}
\sum_{Q\in \mathcal S} \langle \phi  \rangle _{s, Q} \lvert  Q\rvert 
& =
\sum_{Q\in \mathcal S} \langle \phi  \rangle _{s, Q} \lvert  Q\rvert ^{1/t+ 1/t'} 
\\
& \leq 
\Bigl[ 
\sum_{Q\in \mathcal S} \langle \phi  \rangle _{s, Q} ^{t} \lvert  Q\rvert
\Bigr] ^{1/t} \Bigl[ \sum_{Q\in \mathcal S}  \lvert  Q\rvert \Bigr] ^{1/t'} 
\\
& \lesssim
\Bigl[ 
\sum_{Q\in \mathcal S} \langle  \lvert  \phi \rvert ^{s}  \rangle _{ Q} ^{t/s} \lvert  Q\rvert
\Bigr] ^{1/t}   \lvert  Q_0\rvert ^{1/t'}   
\\
& \lesssim \lVert  \phi  \mathbf 1_{Q_0} \lVert _{t} \lvert  Q_0\rvert ^{1/t'}.  
\end{align*}

\end{proof}
%%%%%%%%%%%%%%%%%%%%%%%%%%%%%% PROOF PROOF PROOF

%%%%%%%%%%%%%%%%%%%%%%%%%%%%%% SECTION  SECTION SECTION
%%%%%%%%%%%%%%%%%%%%%%%%%%%%%% SECTION  SECTION SECTION 
\section{The Full Supremum} %\label{s:}

The analog of the $ L ^{p}$-improving properties of $ A_1$ in Theorem~\ref{t:LS} 
concern the `unit scale'  maximal function 
$
\tilde M f   = \sup _{1\leq t \leq 2} A_t f 
$.
This is due to 
 Schlag \cite{MR1388870}, also see  Schlag and Sogge \cite{MR1432805}.  
 
 %%%%%%%%%%%%%%%%%%%%%%%%%%%%%% THEOREM THEOREM THEOREM
\begin{priorResults}\label{t:S}
Let $ \mathbf F_n'$ be the closed convex hull of the four points $ P_1' = (0,0)$, $ P_2' = (\frac  {n-1}n,\frac  {n-1}n)$, $ P_3' = (\frac  {n-1}n, \frac 1 {n})$, and $ P_4' = (\frac {n ^2 -n} {n ^2 +1 }, \frac {n-1} {n ^2 +1})$.  For all $ (\frac1r, \frac1s) $ in   $ \mathbf F'_n $, we have 
\begin{align}  \label{e:full_improve}
\lVert 
\tilde M  \;:\; L ^{r}\mapsto L^s\rVert < \infty . 
\end{align}
\end{priorResults}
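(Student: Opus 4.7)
The plan is to obtain the result by establishing the bound at each of the four vertices $P_1', P_2', P_3', P_4'$ of the polygon $\mathbf F_n'$ and then applying real or complex interpolation (treating $\tilde M$ via the standard linearization of a sublinear operator through a measurable choice $t = t(x) \in [1,2]$). The vertex $P_1'=(0,0)$ is immediate: every $A_t$ is an averaging operator, so $\lVert\tilde M f\rVert_\infty \leq \lVert f\rVert_\infty$. The diagonal vertex $P_2'=\bigl(\tfrac{n-1}{n},\tfrac{n-1}{n}\bigr)$ records the endpoint $L^{n/(n-1)}$ mapping of $\tilde M$, which is nothing but Stein's spherical maximal theorem for $n\geq 3$ and Bourgain's theorem for $n=2$, restricted to the bounded band of radii $t\in[1,2]$; the interior of $\mathbf F_n'$ avoids the true endpoint and so a near-endpoint bound plus interpolation with $P_1'$ suffices.

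For the $L^p$-improving vertex $P_3'=\bigl(\tfrac{n-1}{n},\tfrac{1}{n}\bigr)$, I would start from the fixed-scale Littman--Strichartz inequality of Theorem~\ref{t:LS}: at its critical vertex $A_1 \colon L^{n/(n-1)}\to L^n$ holds, and the estimate is uniform in $t$ over the compact interval $[1,2]$ by dilation invariance. To promote this to the supremum, I would use Sobolev embedding in the $t$-variable via the identity
\begin{equation*}
  \sup_{1\leq t \leq 2} |A_t f(x)|^n \lesssim \int_1^2 |A_t f(x)|^n\, dt + \int_1^2 |t\,\partial_t A_t f(x)|^n\, dt.
\end{equation*}
The first term is controlled by the fixed-scale bound after Minkowski/Fubini. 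For the second, one uses the companion $L^p$-improving mapping property of $\partial_t A_t$ (which costs one derivative but remains admissible in the open interior of $\mathbf F_n'$, so one should actually aim at a point slightly inside the edge and then interpolate back to $P_3'$).

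The hard vertex is $P_4'=\bigl(\tfrac{n^2-n}{n^2+1},\tfrac{n-1}{n^2+1}\bigr)$, and this is the Schlag (resp.\ Schlag--Sogge) content. The route I would take is Fourier analytic: dyadically decompose the spherical multiplier as $\widehat{d\sigma}=\sum_{k\geq 0}m_k$ with $m_k$ supported in $|\xi|\sim 2^k$ and obeying the stationary-phase decay $|m_k|\lesssim 2^{-k(n-1)/2}$. For each frequency piece, one proves a local smoothing / square-function estimate
\begin{equation*}
  \Bigl\lVert \Bigl(\int_1^2 |A_t^{(k)}f|^q\, dt\Bigr)^{1/q}\Bigr\rVert_{L^s(\mathbb R^n)} \lesssim 2^{-k\alpha(r,s)} \lVert f\rVert_{L^r},
\end{equation*}
exploiting the curvature of the light cone $\{(x,t):|x|=t\}$ in space-time. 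A Sobolev embedding in $t$ again converts the $L^q_t$ norm to $L^\infty_t$, and summation in $k$ (requiring positive gain $\alpha(r,s)>0$ precisely at $P_4'$) gives the maximal bound. Once the four vertex bounds are in place, Riesz--Thorin interpolation closes out the closed convex hull $\mathbf F_n'$.

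The main obstacle is clearly the vertex $P_4'$: the balance between the Fourier decay of $\widehat{d\sigma}$, the transversality required for the square-function estimate, and the Sobolev loss in $t$ is delicate, and the very specific exponent $(n^2-n)/(n^2+1)$ is dictated by this balance. Schlag's original two-dimensional proof avoids Fourier methods in favour of geometric incidence counts for annuli, which is itself a nontrivial combinatorial input; in higher dimensions one typically invokes the local smoothing machinery of Mockenhaupt--Seeger--Sogge. Either route is the technical heart of the theorem, whereas the remaining three vertices and the interpolation step are essentially routine.
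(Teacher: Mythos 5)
First, a point of comparison: the paper itself does not prove this statement. It is quoted as a prior result, with the proof delegated to Schlag \cite{MR1388870} and Schlag--Sogge \cite{MR1432805}; so your sketch is being measured against the cited literature rather than an internal argument. At that level your skeleton is the right one (Littlewood--Paley decomposition, stationary-phase decay of $\widehat{d\sigma}$, Sobolev embedding in $t$, local smoothing at the hard vertex, Schlag's combinatorial alternative in $n=2$). But the organizing plan --- prove strong type at the four vertices and close the closed hull by Riesz--Thorin --- fails as stated, because the vertex bounds are partly false. At $P_2'$ with $n=2$ the claim is $L^2\to L^2$ boundedness of the local circular maximal operator, which is known to fail (Bourgain's theorem is strictly $p>2$, and the standard focusing example lives at radii $t\in[1,2]$, so localizing does not save you). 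At $P_4'$ your own convergence requirement ``$\alpha(r,s)>0$ precisely at $P_4'$'' is exactly what does not hold: the gain vanishes at the vertex. In $n=2$ this is visible in the frequency-localized bound quoted in this paper as \eqref{e:M_Lee}, whose operator norm is $2^{j(1-5/q)}$ with exponent $0$ at $q=5$, i.e.\ at $P_4'=(\tfrac25,\tfrac15)$; the dyadic series then has ratio one and does not sum, and the Knapp example of Section 5 (inequality \eqref{e:quad}) shows the loss is real. So your method yields the interior (plus parts of the open edges), not the closed hull; the genuine endpoint analysis is the subject of Lee \cite{MR1949873}. For the host paper this is harmless --- Theorem \ref{t:full_improve} and everything downstream use only interior points --- but you should say so rather than claim the closed region.

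The $P_3'$ step also does not close numerically as written. Per Littlewood--Paley band $\lvert\xi\rvert\sim 2^k$, the fixed-time gain at $P_3'=(\tfrac{n-1}n,\tfrac1n)$ is $2^{-k/n}$ (interpolate the kernel bound $L^1\to L^\infty$ of size $2^k$ against the Plancherel bound $L^2\to L^2$ of size $2^{-k(n-1)/2}$), while your $\partial_t$ term costs a full $2^k$ per band, or $2^{k/2}$ in the sharper $L^2_t$-Sobolev form; the net factor $2^{k(\frac12-\frac1n)}$ diverges for every $n\ge3$, so the derivative is \emph{not} ``admissible in the open interior of $\mathbf F_n'$'' near that edge. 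Moreover ``aim slightly inside the edge and interpolate back to $P_3'$'' is not a legitimate move: interpolation cannot promote interior estimates to a boundary point. The workable elementary route is band-wise: prove the maximal bound at $(\tfrac12,\tfrac12)$ with gain $2^{k/2}\cdot 2^{-k(n-1)/2}=2^{-k(n-2)/2}$ (the same computation this paper uses in its $n\ge3$ continuity proof), then interpolate each band against $L^1\to L^\infty$ and $L^\infty\to L^\infty$; the criticality of that interpolation lands exactly at $P_3'$, giving all interior points on the $P_2'P_3'$ side, while the region out to $P_4'$ genuinely requires the local smoothing input, as you correctly identify. A minor mislabel: $P_3'$ is not the ``critical vertex'' of Littman--Strichartz --- the critical vertex of $\mathbf L_n'$ in Theorem \ref{t:LS} is $(\tfrac n{n+1},\tfrac1{n+1})$, i.e.\ $A_1:L^{(n+1)/n}\to L^{n+1}$; the bound $A_1:L^{n/(n-1)}\to L^n$ you invoke is true but is an interior point of $\mathbf L_n'$, and in any case a fixed-time estimate alone cannot control the supremum without the band-wise bookkeeping above.
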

%%%%%%%%%%%%%%%%%%%%%%%%%%%%%% THEOREM THEOREM THEOREM

This  `continuity property' is a corollary.

%%%%%%%%%%%%%%%%%%%%%%%%%%%%%% THEOREM THEOREM THEOREM
\begin{theorem}\label{t:full_improve}  
For all $ (\frac1r, \frac1s) $ in the interior of $ \mathbf F'_n $, we have for some  $ \eta = \eta (n,r,s)>0$, 
\begin{align}  \label{e:full_improve_cont}
\lVert  \sup _{1\leq t \leq 2} \lvert  A_t f - \tau _y A_t f  \rvert 
\rVert_s 
\lesssim  \lvert  y\rvert ^{\eta } \lVert f\rVert_r, \qquad  \lvert  y\rvert<1 .
\end{align}
\end{theorem}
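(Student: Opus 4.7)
The proof will parallel Schlag's derivation of Theorem~\ref{t:S} from single-scale $L^p$-improving estimates: a Sobolev-type reduction in the scale variable $t$, combined with continuity estimates of the type furnished by Theorem~\ref{t:littman} for both $A_t$ and its scale derivative $\partial_t A_t$.

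Using the translation invariance $\tau_y A_t = A_t \tau_y$, one writes $A_t - \tau_y A_t = A_t (I - \tau_y)$ and $\partial_t A_t - \tau_y \partial_t A_t = (\partial_t A_t)(I - \tau_y)$. Applying the pointwise inequality
\[
\sup_{1 \le t \le 2}|g(t)| \le |g(1)| + \int_1^2 |g'(t)|\,dt
\]
to $g(t) = (I - \tau_y) A_t f(x)$, then taking the $L^s_x$-norm and using Minkowski's inequality, one obtains
\[
\bigl\lVert \sup_{1 \le t \le 2} |(I - \tau_y) A_t f|\bigr\rVert_s
\le \lVert (I - \tau_y) A_1 f\rVert_s
+ \int_1^2 \lVert (I - \tau_y)\partial_t A_t f\rVert_s\,dt.
\]
The first term on the right is bounded by $|y|^{\eta} \lVert f\rVert_r$ directly by Theorem~\ref{t:littman}: observe that the interior of $\mathbf F'_n$ lies in the interior of $\mathbf L'_n$, since the vertices $P_2'$ and $P_4'$ lie on the boundary of $\mathbf L'_n$ while any interior point of $\mathbf F'_n$ avoids those edges.

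For the second term, one needs a continuity estimate of the same form for $\partial_t A_t$. This is a weighted spherical-average operator whose Fourier multiplier decays one power slower than that of $A_t$, and its $L^p$-improving region is correspondingly smaller; the trapezium $\mathbf F'_n$ is by construction the intersection of this range with the Littman-Strichartz range $\mathbf L'_n$ for $A_t$ (which is exactly what accounts for $\mathbf F'_n \subsetneq \mathbf L'_n$ and for the appearance of the critical vertex $P_4'$ in place of $P_3'$). Running the same stationary-phase / $TT^*$ argument that proves Theorem~\ref{t:littman} in \S~\ref{s:continuity}, with $A_1$ replaced by $\partial_t A_t|_{t=1}$, yields $\lVert (I - \tau_y)\partial_t A_t f\rVert_s \lesssim |y|^{\eta}\lVert f\rVert_r$ uniformly in $t \in [1,2]$ (uniformity coming from a rescaling as in Lemma~\ref{l:lac_scale}). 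Combined with the Sobolev-in-$t$ decomposition above, this gives Theorem~\ref{t:full_improve} on the interior of $\mathbf F'_n$. The main obstacle is checking that the oscillatory-integral machinery of \S~\ref{s:continuity} transfers from $A_1$ to $\partial_t A_t|_{t=1}$; the required modifications should amount to routine notational changes, since the relevant kernel is still a spherically localized oscillatory integral, merely with one fewer power of decay in frequency.
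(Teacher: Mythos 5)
There is a genuine gap, and it is fatal to the plan as written: the claimed fixed-time continuity estimate $\lVert (I-\tau_y)\partial_t A_t f\rVert_s \lesssim \lvert y\rvert^{\eta}\lVert f\rVert_r$ on the interior of $\mathbf F'_n$ is not true, and $\mathbf F'_n$ is not ``by construction'' the improving region of $\partial_t A_t$ intersected with $\mathbf L'_n$. The trapezium $\mathbf F'_n$ is Schlag's (and Schlag--Sogge's) region for the \emph{maximal} operator $\tilde M$, obtained from local smoothing estimates in $L^p(dx\,dt)$; it cannot be recovered from fixed-time bounds on $\partial_t A_t$. Concretely, in dimension $n=2$ your second term does not exist at any exponents: by \eqref{e:sig} the multiplier of $\partial_t A_t$ grows like $\lvert\xi\rvert^{1/2}$ (the $t$-derivative costs a full power of $\lvert\xi\rvert$ while $\widehat{d\sigma}$ only decays like $\lvert\xi\rvert^{-1/2}$), so $\partial_t A_t$ satisfies no fixed-time $L^r\to L^s$ bound whatsoever, and no ``routine notational change'' to the argument for $A_1$ can produce one. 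Even for $n\geq 3$, where $\partial_t A_t$ does have some improving region, your crude Sobolev reduction $\sup_t\lvert g\rvert \leq \lvert g(1)\rvert + \int_1^2\lvert g'\rvert\,dt$ spends a full derivative in $t$ and can only reach a Stein-type region strictly smaller than $\mathbf F'_n$; it misses a neighborhood of the critical vertex $P_4'$, which is exactly where the local-smoothing input is irreplaceable.

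The paper's route avoids this entirely, and the structural point is worth absorbing: it never proves fixed-time bounds for $\partial_t A_t$ across the region. Instead it (i) reduces the supremum over $t\in[1,2]$ to a finite $\lvert y\rvert^{\eta}$-net $\mathcal T$, bounding the net supremum by ${}^{\sharp}(\mathcal T)^{1/p_2}\lvert y\rvert^{\eta}\lVert f\rVert_{p_1}$ via Theorem~\ref{t:littman} (using only $\mathbf F'_n\subset\mathbf L'_n$, as you correctly observe for your first term), and (ii) handles nearby times by the modulus-of-continuity bound of Proposition~\ref{p:full}, which is proved at a \emph{single} point and then interpolated against the known maximal bound \eqref{e:full_improve} of Theorem~\ref{t:S} --- the black box encoding Schlag's hard work, which your plan never invokes. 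The derivative in $t$ is used only in $L^2$: for $n\geq 3$, $\lVert\partial_t A_t f\rVert_{L^2(\mathbb R^n\times[1,2))}\lesssim\lVert f\rVert_2$ gives a $\operatorname{Lip}(\frac14)$ embedding at $(\frac12,\frac12)$; for $n=2$ one must first apply a Littlewood--Paley decomposition, accept the loss $2^{j/2}$ in \eqref{e:Mk2}, and beat it with Lee's frequency-localized maximal estimates \eqref{e:M_Lee} before summing in $j$. (A smaller inaccuracy: the paper's proof of Theorem~\ref{t:littman} is not stationary-phase machinery but simply Plancherel's theorem \eqref{e:trans0} interpolated with the Littman--Strichartz bounds, so there is no oscillatory-integral argument available to ``re-run'' for $\partial_t A_t$ in the first place.)
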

%%%%%%%%%%%%%%%%%%%%%%%%%%%%%% THEOREM THEOREM THEOREM

We will delay the proof of this theorem to the next section.  See Figure~\ref{f:F} for a picture of the trapeziums $\mathbf  F_n$ and $\mathbf  F'_n$.

%%%%%%%%%%%%%%% Figure
\begin{figure}

\begin{tikzpicture}

\begin{scope}[scale=1.5]
\draw[->,thick] (-.5,0) -- (2.5,0) node[below] {$ \frac1p$} ;
\draw[->,thick] (0,-.5) -- (0,2.5) node[left] {$ \frac1q$}; 
\filldraw[gray, opacity=.2] (0,2)  -- (1.9,.1)  -- (1.9, 1.5 ) --  (1.4,1.8) -- (0,2);
\draw (0,2)  node[left] {$ P_1$}; 
\draw (1.9,.2) node[above,right] {$ P_2$};  \draw (1.4,1.8)  node[above] {$ P_4$}; 
\draw  (1.9, 1.5 )  node[right] { $ P_3$}; 
\draw (1.4,1.1) node {$ F_n$}; 

\begin{scope}[xshift=4cm] 
\draw[->,thick] (-.5,0) -- (2.5,0) node[below] {$ \frac1p$} ;
\draw[->,thick] (0,-.5) -- (0,2.5) node[left] {$ \frac1q$}; 
\filldraw[gray, opacity=.2] (0,0) -- (1.9,1.9) -- (1.9, .5 ) --  (1.4,.2) -- (0,0);  
\draw (-.3,-.1) node[below] {$ P_1'$}; 
\draw (1.9,1.9) node[above] {$ P_2'$}; 
\draw (1.9, .5 )  node[right] { $ P_3'$}; 
\draw (1.2, .2 )  node[above] { $ P_4'$}; 
\draw (1.6,1) node {$ F_n'$}; 
\end{scope}

\end{scope}

\end{tikzpicture}

\caption{The trapezium $ F_n$ on the left, and $ F'_n $ on the right. (When $ n=2$, they are in fact triangles.)} 
\label{f:F}
\end{figure}
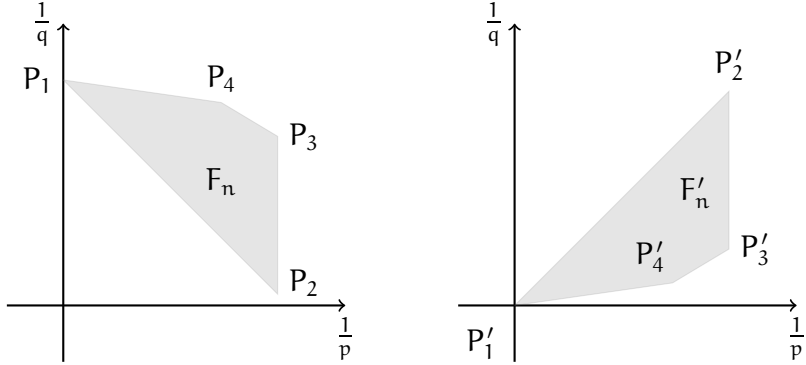
%%%%%%%%%%%%%%% Figure

We again make a dyadic reduction.  For a cube $ Q$ with side length $ 2 ^{q}$, for $ q\in \mathbb Z $, let 
\begin{equation*}
\tilde M_Q f =   \sup _{2 ^{q -3} \leq t < 2 ^{q-2}} A _{t} (f \mathbf 1_{\frac 13 Q}), \qquad \ell Q = 2 ^{q}.  
\end{equation*}
There are a choice of $ 3 ^{n}$ dyadic grids $ \mathcal D_1 ,\dotsc, \mathcal D _{3 ^{n}} $ so that 
\begin{equation*}
 \sup _{2 ^{q -3} \leq t < 2 ^{q-2}} A _{t} (f \mathbf 1_{\frac 13 Q}) 
 \leq  \sum_{s=1} ^{3 ^{n} } \sum_{Q \in \mathcal D_s \;:\; \ell Q = 2 ^{q}}  \tilde M_Q f 
\end{equation*}
Therefore, it suffices to prove the sparse bound for each of the maximal operators 
\begin{equation} \label{e:fMD}
M _{\mathcal D_s} f := 
\sup _{Q\in \mathcal D_t} \tilde M_Q f , \qquad  1\leq s \leq  3 ^{n}.  
\end{equation}
We fix such a grid below, and write $ \mathcal D = \mathcal D_s$.  
The main Lemma is as before.   We will prove it, and leave the details of the derivation of Theorem~\ref{t:full} to the reader.  

%%%%%%%%%%%%%%%%%%%%%%%%%%%%%% LEMMA LEMMA LEMMA
\begin{lemma}\label{l:full_sparse}
Let $ (\frac1r, \frac1s)$ be in the interior of $ \mathbf F _n$.   Let $ f_1 = \mathbf 1_{F_1}$.  
Let $ \mathcal Q \subset \mathcal{D}$ be a collection of sub cubes of $ Q_0$ so that  
\begin{equation*}
\sup _{Q'\in \mathcal Q}  \sup _{Q \;:\; Q'\subset Q\subset Q_0} \Bigl\{\frac { \langle f_1  \rangle_{Q,r}} 
{\langle f_1  \rangle_{Q_0,r}} +  \frac { \langle f_2  \rangle_{Q,s}} 
{\langle f_2  \rangle_{Q_0,s}} \Bigr\} < C_0. 
\end{equation*}
Then, there holds 
\begin{equation}\label{e:full_sparse}
\Bigl\langle  \sup _{Q\in \mathcal Q}  \tilde M_Q f_1 ,f_2 \Bigr\rangle 
\lesssim \lvert  Q_0\rvert \langle f_1  \rangle_{Q_0,r} \langle f_2 \rangle_{Q_0,s}.     
\end{equation}
\end{lemma}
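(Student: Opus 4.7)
\emph{Proof plan.} The plan is to mirror the proof of Lemma~\ref{l:lac_sparse} almost verbatim, using the full--maximal continuity estimate Theorem~\ref{t:full_improve} in place of Theorem~\ref{t:littman}. Specifically, I would linearize $\sup_{Q\in\mathcal Q}\tilde M_Q f_1$ by pairwise disjoint sets $F_Q\subset Q$, then run the Calder\'on--Zygmund stopping decomposition $f_1 = g_1 + b_1$ of the indicator $f_1 = \mathbf 1_{F_1}$ with maximal bad cubes $\mathcal B$ satisfying \eqref{e:bad}, and split $b_1 = \sum_k B_{1,q_0 - k}$ by scale with $B_{1,q_0 - k} = \sum_{P\in\mathcal B(q_0 - k)} h_P$ and $h_P = (f_1 - \langle f_1\rangle_P)\mathbf 1_P$. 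The contribution of $g_1$ is handled as in the lacunary case: $\lVert g_1\rVert_\infty\lesssim \langle f_1\rangle_{Q_0,r}$, $\tilde M_Q$ preserves $L^\infty$, and the disjointness of the $F_Q$'s together with Jensen (since $s\ge 1$) yields the bound $\lesssim |Q_0|\langle f_1\rangle_{Q_0,r}\langle f_2\rangle_{Q_0,s}$.

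The heart of the argument is to show that for each fixed $k\ge 1$ and each $Q\in \mathcal Q$ with $\ell Q = 2^q$,
\[
\int_{F_Q} f_2\cdot\tilde M_Q B_{1,q-k}\,dx\;\lesssim\;2^{-\eta k}|Q|\langle B_{1,q-k}\mathbf 1_Q\rangle_{Q,r}\langle f_2\mathbf 1_{F_Q}\rangle_{Q,s}.
\]
For this I would pointwise dominate $\tilde M_Q B_{1,q-k}\le \sup_{t\simeq \ell Q}|A_t(B_{1,q-k}\mathbf 1_{Q/3})|$, pair with $f_2\mathbf 1_{F_Q}$ by H\"older with exponent pair $(s',s)$, and then bound $\lVert \sup_t|A_t B_{1,q-k}|\rVert_{s'}$ using mean zero. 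The crucial representation is
\[
A_t h_P(x) = \frac 1{|P|}\int_{|v|\lesssim \ell P}\bigl[A_t h_P^{(v)}(x) - \tau_{-v}A_t h_P^{(v)}(x)\bigr]\,dv,\qquad h_P^{(v)}:= h_P\mathbf 1_{P\cap(P+v)},
\]
obtained by combining $\int h_P = 0$ with the spherical kernel identity $K_t(x,u+v) = K_t(x-v,u)$. Summing over the disjoint $P\in \mathcal B(q-k)$ contained in $Q/3$, then taking $\sup_t$ and $L^{s'}$ norms, Theorem~\ref{t:full_improve} applied at scale $\ell Q$ produces the required factor $(|v|/\ell Q)^\eta\lesssim 2^{-\eta k}$ together with the dimensional scaling $|Q|^{1/s' - 1/r}$. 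The admissibility $(1/r,1/s')\in \mathbf F_n'$ is precisely the hypothesis $(1/r,1/s)\in \mathbf F_n$, by the defining correspondence $(1/r,1/s)\in \mathbf F_n\iff (1/r,1-1/s)\in \mathbf F_n'$.

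The remaining summation over $(k,q,Q)$ is carried out as in the lacunary proof: the disjointness of the $F_Q$, the disjointness of the supports of $B_{1,q-k}\mathbf 1_Q$ as $q$ varies, the stopping condition \eqref{e:bad}, and H\"older produce the two sub-estimates in the style of \eqref{e:SQ1}--\eqref{e:SQ2}, after which $\sum_k 2^{-\eta k}<\infty$ concludes. The main obstacle I anticipate is the loss of the clean duality used in \eqref{e:zcont}: since $\tilde M_Q$ is sublinear rather than self-adjoint, one cannot transfer the cancellation onto a linear adjoint. The remedy is to push the cancellation from $h_P$ directly into the translation difference \emph{inside} the supremum over $t$ via the averaged representation above, which is exactly the form that matches Theorem~\ref{t:full_improve}.
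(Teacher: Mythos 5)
Your proposal is correct, but at the decisive step it takes a genuinely different route from the paper. The paper handles the sublinearity exactly by the maneuver you declared unavailable: it linearizes the supremum, replacing $\tilde M_Q$ by $L_Q\phi(x) = A_{t_Q(x)}\phi(x)$ for a measurable stopping time $t_Q$; since $\lvert L_Q\phi - L_Q\tau_{-y}\phi\rvert \le \sup_t \lvert A_t\phi - \tau_{-y}A_t\phi\rvert$ pointwise, Theorem~\ref{t:full_improve} gives an $L^r\mapsto L^{s'}$ continuity bound for $L_Q$ uniform in $t_Q$, hence a dual bound for $L_Q^{\ast}$, and the mean-zero cancellation of $B_{1,q-k}$ is then transferred onto $L_Q^{\ast}f_Q$ exactly as in \eqref{e:zcont}, yielding \eqref{e:full3}. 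You instead keep the cancellation on the bad function itself, via the exact identity $h_P = \lvert P\rvert^{-1}\int_{\lvert v\rvert_\infty \le \ell P}\bigl[h_P^{(v)} - \tau_{-v}h_P^{(v)}\bigr]\,dv$ with $h_P^{(v)} = h_P\mathbf 1_{P\cap(P+v)}$, which pushes the translation difference under the supremum in $t$ and lets you apply Theorem~\ref{t:full_improve} in its stated, non-dual form with exponents $(1/r,1/s')\in \mathbf F_n'$; your exponent correspondence with $\mathbf F_n$ is verified by the affine map of vertices. One caveat you should make explicit: the summation over $P\in\mathcal B(q-k)$ must be performed \emph{inside} the $v$-average, i.e.\ you apply the continuity estimate once to $H^{(v)} = \sum_P h_P^{(v)}$, using that the $h_P^{(v)}$ have disjoint supports so $\lVert H^{(v)}\rVert_r \le \lVert B_{1,q-k}\mathbf 1_Q\rVert_r$ (this works because all cubes in $\mathcal B(q-k)$ have common side length $2^{q-k}$, so one $v$-range serves all). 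If you instead took $L^{s'}$ norms cube by cube and then summed, the triangle inequality would cost a factor on the order of the number of bad cubes, roughly $2^{kn/r'}$, which would overwhelm the small gain $2^{-\eta k}$. Your ordering of operations reads correctly, and granted that reading, your argument is valid; its advantage is that it sidesteps linearization and duality altogether, making the cancellation mechanism more transparent, while the paper's adjoint route is shorter because it reuses the template of Lemma~\ref{l:lac_sparse} nearly verbatim. The remaining summation over $(k,q,Q)$, using disjointness of the $F_Q$, disjointness of the supports of $B_{1,q-k}\mathbf 1_Q$, the stopping condition \eqref{e:bad}, and the indicator structure of $f_1$, coincides with the paper's.
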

%%%%%%%%%%%%%%%%%%%%%%%%%%%%%% LEMMA LEMMA LEMMA

%%%%%%%%%%%%%%%%%%%%%%%%%%%%%% PROOF PROOF PROOF
\begin{proof}
The proof closely follows the lines of the proof of Lemma~\ref{l:lac_sparse}. 
Assume $ \langle f_1  \rangle_{Q_0,r}= \langle f_2 \rangle_{Q_0,s}=1$.  Define the collection of `bad' cubes $ \mathcal B$ as in \eqref{e:bad}.  We bound the bilinear form 
\begin{equation} \label{e:BF}
\sum_{Q\in \mathcal Q} \langle  \tilde M_Q f_1 ,f_Q  \rangle, 
\end{equation}
where  $ \{F_Q \;:\; Q\in \mathcal Q\} $ is a family of disjoint sets with $ F_Q\subset Q$, 
and $ f_Q = \mathbf 1_{F_Q} f_2$.

Use the  Calder\'on-Zygmund decomposition,  just like in   \eqref{e:bad1}. 
The bilinear form in \eqref{e:BF} is a divided into two terms, of which the first has the good function $ g_1$ in the first place.  
\begin{equation*}
\sum_{Q\in \mathcal Q} \lvert  \langle  \tilde M_Q g_1 , \mathbf 1_{F_Q}f_2  \rangle\rvert 
\lesssim \sum_{Q\in \mathcal Q} \int _{F_Q} \lvert  f_2\rvert\;dx \lesssim \lvert  Q_0\rvert.    
\end{equation*}

\smallskip 

The second term has $ b_1$ in the first place, and $ f_Q$ in the second.  Namely, we have to bound 
\begin{equation*}
\sum_{Q\in \mathcal Q}\lvert   \langle  \tilde M_Q b_1 , f_Q  \rangle \rvert 
\leq  \sum_{k=1} ^{\infty } 
\sum_{Q\in \mathcal Q}\lvert   \langle  \tilde M_Q B_{1, q- k}, f_Q  \rangle \rvert , \qquad  ( \ell Q= 2 ^{q}). 
\end{equation*}
Above, we have used the expansion in \eqref{e:bad1}.  
We will use the continuity inequality \eqref{e:full_improve_cont} to establish the desired bound with geometric decay in $ k$. 
Let us argue by duality.  For each $ Q\in \mathcal Q$ we can replace $ \tilde M_Q  $ 
by $ L _{Q} \phi  (x)= A _{t_Q (x)} \phi (x)$, where $ t_Q \;:\; \tfrac 13 Q \mapsto [2 ^{q-2}, 2 ^{q-1}]$ is measurable.  
Then, estimate 
\begin{align}
\lvert \langle  & L _{Q}    B_{1,q- k_1} , f_Q \rangle  \rvert = 
\lvert \langle    B_{1,q- k} ,  L _{Q} ^{\ast} f_Q  \rangle  \rvert 
\\&
\leq \sum_{\substack{P\in \mathcal B (q-k) \\ P\subset Q} } 
\frac 1 {\lvert  P\rvert } \int_{P}
\Bigl  \lvert  \int_P   B_{1,q-k_1} (x)  \cdot  \bigl(   L _{Q} ^{\ast}  f_Q(x)- L _{Q} ^{\ast}   f_Q(x') \bigr)\; dx \Bigr\rvert \,dx' 
\\ \label{e:full3}
& \lesssim 2 ^{- \eta k} 
\lvert  Q\rvert \langle  B_{1,q- k} \rangle_{Q,r} \langle f_Q \rangle _{Q,s} . 
\end{align}
Here, the notation is similar to \eqref{e:zcont}, and we   appeal to the  scale-invariant and \emph{dual} form of   \eqref{e:full_improve_cont}.  
The remainder of the argument is exactly as in the proof of Lemma~\ref{l:lac_sparse}. 

\end{proof}
%%%%%%%%%%%%%%%%%%%%%%%%%%%%%% PROOF PROOF PROOF

%%%%%%%%%%%%%%%%%%%%%%%%%%%%%% SECTION  SECTION SECTION
%%%%%%%%%%%%%%%%%%%%%%%%%%%%%% SECTION  SECTION SECTION 
\section{Proof of  the Continuity Inequalities.  } \label{s:continuity}

%%%%%%%%%%%%%%%%%%%%%%%%%%%%%% SUBSECTION SUBSECTION SUBSECTION SUBSECTION
 %%%%%%%%%%%%%%%%%%%%%%%%%%%%%% SUBSECTION SUBSECTION SUBSECTION SUBSECTION 
\subsection{Proof of Theorem~\ref{t:littman}}%\label{ss.}
From Plancherel's theorem,   we have 
\begin{equation}\label{e:trans0}
\lVert A_1 f  -  A_1 \tau _y : L ^2 \mapsto L ^2   \rVert
= \lVert (1 - e ^{i y \cdot \xi }) \widehat {d \sigma } (\xi ) \rVert_ \infty 
\lesssim \lvert  y\rvert ^{\eta _0} , \qquad \eta _0 = \eta _o (n) >0. 
\end{equation}
To see this last inequality, we need only appeal to the well known decay estimate for $ \lvert  \widehat {d \sigma } (\xi )\rvert$ which we recall below.

In interpolation between this $ L^2$ estimate and the $ L ^{r}$ improving estimates of Theorem~\ref{t:LS},  it is clear that the conclusion \eqref{e:littman} holds for $ (\frac1r, \frac1s)$ in the interior of the triangle $ \mathbf L'_n$.  

 %%%%%%%%%%%%%%%%%%%%%%%%%%%%%%  SUBSECTION SUBSECTION SUBSECTION 
\subsection{Proof of Theorem~\ref{t:full_improve}}%\label{ss.}
We recall that the Fourier transform of  $ \sigma $, the uniform measure on the sphere $ \mathbb S ^{n-1}$, is 
\begin{equation} \label{e:sig}
\widehat {d \sigma } (\xi ) = e ^{-i \lvert  \xi \rvert } a _{-} (\xi )  + e ^{i \lvert  \xi \rvert } a _{+} (\xi ),  
\end{equation}
where $ \lvert  \partial ^{\alpha } a _{\pm} ( \xi ) \rvert \lesssim (1+ \lvert  \xi \rvert ) ^{- (n-1)/2- \lvert  \alpha \rvert }$.

The trapezium $ \mathbf F'_n$ is contained in the triangle $ \mathbf L'_n$. 
Thus, if $ \mathcal T \subset [1,2]$ is a finite set, it follows from Theorem~\ref{t:littman} that we have 
\begin{equation*}
\lVert \sup _{t\in \mathcal T} \lvert  A _{t} f - \tau _y A _{t} f \rvert \rVert _ {p_2}
\lesssim  {}^{\sharp} (\mathcal T) ^{1/p_2}  \cdot \lvert  y\rvert ^{\eta }   \lVert f\rVert _{p_1} , \qquad (\tfrac1{p_1}, \tfrac 1{p_2}) \in \mathbf F'_n \setminus \{  (0,1), (1,0)\}. 
\end{equation*}
Taking $ \mathcal T $ be a $ \lvert  y\rvert ^{\eta} $-net in $ [1,2]$, it clearly suffices to show  this modulus of continuity result.  

%%%%%%%%%%%%%%%%%%%%%%%%%%%%%% PROPOSITION PROPOSITION PROPOSITION
\begin{proposition}\label{p:full} 
Subject to $  (\tfrac1{r}, \tfrac 1{s}) $ satisfying  the hypotheses of Theorem~\ref{t:full_improve}, 
there is a $ \eta >0$ so that for all $ 0< \delta < \frac 12 $, we have 
\begin{equation}\label{e:full0}
\bigl\lVert \sup _{ \substack{ s,t\in [1,2]\\  \lvert  s-t\rvert< \delta  }}
 \lvert  A _{t} f - A_s f \rvert  \bigr\rVert_ {s} \lesssim \delta ^{\eta } \lVert f\rVert _{r}.   
\end{equation}
\end{proposition}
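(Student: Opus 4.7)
\medskip

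\noindent\textbf{Proof plan for Proposition~\ref{p:full}.}

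The plan is to decompose $f$ via Littlewood--Paley in the spatial variable, $f = \sum_{k\ge 0} P_k f$, with $P_0$ projecting onto $|\xi|\lesssim 1$ and $P_k$ onto $|\xi|\sim 2^k$ for $k\ge 1$. Since $|A_t(\sum_k P_k f) - A_s(\sum_k P_k f)| \le \sum_k |A_t P_k f - A_s P_k f|$ pointwise, it suffices to bound each frequency piece in $L^s$ and sum. I would combine two complementary estimates: a \emph{smoothness-in-$t$} bound yielding decay in $\delta$ at the cost of growth in $k$, and a \emph{frequency-decay} bound yielding decay in $k$ with no gain in $\delta$.

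For the smoothness bound, the fundamental identity $A_t f - A_s f = \int_s^t \partial_u A_u f\,du$ gives $\sup_{|s-t|<\delta}|A_t P_k f - A_s P_k f| \le \delta \sup_{u\in[1,2]}|\partial_u A_u P_k f|$ pointwise. Differentiating the formula \eqref{e:sig} shows that $\partial_u \widehat{d\sigma}(u\xi) = \xi\cdot\nabla\widehat{d\sigma}(u\xi)$ has precisely the same form as \eqref{e:sig} with one extra factor of $|\xi|$ in the leading term. Thus $\partial_u A_u = \tilde A_u\circ D$, where $\tilde A_u$ is an averaging operator of the same class as $A_u$ (so Theorem~\ref{t:S} applies to its maximal function) and $D$ is a frequency multiplier of size $\sim |\xi|$. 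Consequently
\begin{equation*}
\bigl\| \sup_{|s-t|<\delta}|A_t P_k f - A_s P_k f| \bigr\|_s \;\lesssim\; \delta\cdot 2^k \,\|P_k f\|_r.
\end{equation*}

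For the frequency-decay bound, I would use the fact that for $(\tfrac1r,\tfrac1s)$ in the \emph{interior} of $\mathbf F'_n$ there is slack, yielding an improvement of Theorem~\ref{t:S} for frequency-localized inputs: $\|\tilde M P_k f\|_s \lesssim 2^{-\beta k}\|P_k f\|_r$ for some $\beta=\beta(r,s)>0$. The standard route is to interpolate Theorem~\ref{t:S} at a boundary point of $\mathbf F'_n$ with the $L^2\to L^2$ bound for $\tilde M P_k$, which gains $2^{-\beta_0 k}$ by combining the Fourier decay $|\widehat{d\sigma}(\xi)|\lesssim (1+|\xi|)^{-(n-1)/2}$ with Sobolev embedding in $t$. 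Triangle inequality then gives $\|\sup_{|s-t|<\delta}|A_t P_k f - A_s P_k f|\|_s \lesssim 2^{-\beta k}\|P_k f\|_r$.

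To conclude, I take the geometric mean of the two bounds at exponent $\theta\in(0,1)$ to obtain $\delta^{1-\theta}\,2^{((1-\theta)-\beta\theta)k}\|P_k f\|_r$. Choosing $\theta = (1+\gamma)/(1+\beta)$ for a small $\gamma>0$ makes $(1-\theta)-\beta\theta = -\gamma$, so summing in $k$ (using $\|P_k f\|_r\lesssim \|f\|_r$ uniformly) produces $\delta^{(\beta-\gamma)/(1+\beta)}\|f\|_r$. Taking $\eta := (\beta-\gamma)/(1+\beta)>0$ yields \eqref{e:full0}. The main obstacle is establishing the frequency-decay estimate $\|\tilde M P_k f\|_s\lesssim 2^{-\beta k}\|P_k f\|_r$ uniformly on compact subsets of the interior of $\mathbf F'_n$; for $n\ge 3$ the Sobolev-in-$t$ argument on the $L^2$ side produces the gain cleanly, whereas for $n=2$ the critical scaling forces one to invoke the geometric content of Bourgain's and Schlag's arguments behind Theorem~\ref{t:S}, and the interior hypothesis is precisely what furnishes the room to gain \emph{any} positive power of the frequency.
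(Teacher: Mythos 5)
Your overall architecture --- Littlewood--Paley decomposition, a smoothness-in-$t$ bound with $\delta$-gain and frequency loss, a frequency-decay bound with no $\delta$-gain, then a geometric mean and summation in $k$ --- is precisely the skeleton of the paper's proof in dimension $n=2$, and in dimensions $n\geq 3$ your plan does go through (there the multiplier bound from \eqref{e:sig} gives $\lVert \tilde M P_k \rVert_{2\to 2} \lesssim 2^{-k(n-2)/2}$, so the $L^2$ side really does gain a power of the frequency, as you say). Note, though, that for $n \geq 3$ the paper is considerably simpler than your plan: since $|\partial_t \widehat{d\sigma}(t\xi)| \lesssim |\xi|(1+|\xi|)^{-(n-1)/2}$ is \emph{bounded} when $n\geq 3$, one gets $\lVert \partial_t A_t f \rVert_{L^2(\mathbb R^n \times [1,2])} \lesssim \lVert f \rVert_2$ with no decomposition at all, hence a $\delta^{1/4}$-gain at $(\tfrac12,\tfrac12)$ directly, and a single interpolation with Theorem~\ref{t:S} covers the interior of $\mathbf F'_n$. (A smaller point: your general-$(r,s)$ smoothness bound $\delta\,2^k\lVert P_k f\rVert_r$ invokes Theorem~\ref{t:S} for the maximal function of the differentiated operator $\tilde A_u$, which is not literally what that theorem states; the paper sidesteps this by running the smoothness estimate only at $L^2$, where Plancherel suffices, cf.\ \eqref{e:Mk2}.)

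The genuine gap is in $n=2$, exactly at the step you flagged as the main obstacle, and your proposed repair does not work as stated. Your route to the frequency-decay estimate $\lVert \tilde M P_k f\rVert_s \lesssim 2^{-\beta k}\lVert P_k f\rVert_r$ is to interpolate Theorem~\ref{t:S} at the boundary with an $L^2\to L^2$ bound gaining $2^{-\beta_0 k}$ from Fourier decay plus Sobolev embedding in $t$. But in the plane that $L^2$ computation yields $\beta_0 = \tfrac{n-2}{2} = 0$: the decay $(1+|\xi|)^{-1/2}$ is exactly cancelled by the derivative cost, and $\lVert \tilde M P_k\rVert_{2\to2} = O(1)$ with no gain. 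Interpolating two gain-free bounds cannot manufacture frequency decay at interior points, so the claim that ``the interior hypothesis is precisely what furnishes the room to gain any positive power'' is where the argument breaks --- interior slack transfers a gain you already have somewhere, it does not create one. The paper fills this hole with an input strictly stronger than Theorem~\ref{t:S}, namely Lee's endpoint estimate \eqref{e:M_Lee}: $\lVert M_{\delta,j} : L^p \to L^q \rVert \lesssim 2^{j(1-\frac5q)}$ on the line $\tfrac1p + \tfrac3q = 1$, whose exponent is negative for $\tfrac{14}{3} < q < 5$; this is a local-smoothing-type bound (Schlag--Sogge, refined by Lee) at points inside $\mathbf F'_2$ near the vertex $(\tfrac25,\tfrac15)$, not a consequence of the maximal theorem itself. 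Interpolating \eqref{e:M_Lee} at $(p,q)=(3,\tfrac92)$ with the $L^2$ bound $\delta^{1/4}2^{j/2}$ then yields $\delta^{\eta}2^{-\eta j}$ near $(\tfrac13,\tfrac29)$, after which your summation and final interpolation bookkeeping are fine. So to complete your proof in $n=2$ you must explicitly import such a frequency-localized improving estimate; gesturing at ``the geometric content of Bourgain's and Schlag's arguments'' leaves the one quantitatively essential step unproved.
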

%%%%%%%%%%%%%%%%%%%%%%%%%%%%%% PROPOSITION PROPOSITION PROPOSITION

%%%%%%%%%%%%%%%%%%%%%%%%%%%%%% SUBSUBSECTION SUBSUBSECTION SUBSUBSECTION 
\subsubsection*{The Proof in Dimensions $ n\geq 3$}%\label{sss.}
It suffices to prove a version of \eqref{e:full0} at the point $ (\frac12 ,\frac12)$, and then interpolate to the other points in the interior of $ \mathbf F'_n$.  
Using \eqref{e:sig} and Plancherel,  we see that there is a full derivative in $ t$: 
\begin{align*}
\lVert  \partial_t A_t f \rVert _{L ^2 (\mathbb R ^{n} \times [1,2))} \lesssim \lVert f\rVert_2 . 
\end{align*}
It follows that for each $x \in \mathbb R ^{n}$,  $ A_t f (x) $ continuously embeds 
as a function of $ t$ into the class $ \operatorname {Lip} (\frac 14)$, so that \eqref{e:full0} follows.

%%%%%%%%%%%%%%%%%%%%%%%%%%%%%% SECTION  SECTION SECTION
%%%%%%%%%%%%%%%%%%%%%%%%%%%%%% SECTION  SECTION SECTION 
\subsubsection*{The Proof in Dimension $ n=2$} %\label{s:}
We rely upon the detailed analysis of   Sanghyuk Lee \cite{MR1949873}, which refines the work of Schlag \cite{MR1388870} and Schlag-Sogge \cite{MR1432805} in the convolution setting. 
Again, we prove the estimate \eqref{e:full0} at a single point in  the triangle $ \mathbf F_2'$, and obtain the result as stated by interpolation.  

%%%%%%%%%%%%%%% Figure
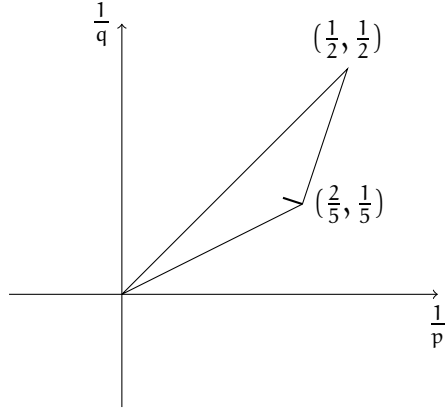
\begin{figure}
\begin{tikzpicture}[scale=6] 
\draw[->] (-.25, 0) -- (.7,0) node[below] {$ \frac 1p$}; \draw[->]  (0,-.25) -- (0,.6) node[left] {$ \frac 1q$}; 
\draw (0,0) -- (.5,.5) node[above] {$ (\frac 12 ,\frac 12)$} -- (.4,.2) node[below,right] {$ (\frac 25,\frac 15)$} -- (0,0); 
\draw[thick]  (.357,.214) -- (.4,.2); 
\end{tikzpicture}
\caption{The Triangle $ \mathbf F_2'$, and the elements of the proof of Theorem~\ref{t:full_improve} in the case of dimension $ 2$.  The thick line inside the triangle goes from $ (\frac 5{14},\frac 3 {14})$ to $ (\frac 25,\frac 15) $. 
The point $  (\frac 13, \frac 29)$ is on the thick line, and we will interpolate between an estimate at that point, and an estimate at $ (\frac 12 , \frac 12)$. }  
\label{f:F2}
\end{figure}
%%%%%%%%%%%%%%% Figure

A Littlewood-Paley decomposition is needed.  Let $  \mathbf 1_{ [1,2]} \leq \zeta \leq \mathbf 1_{[\frac 12, \frac 52]} $ be a smooth function on $ \mathbb R $ so that $ \sum_{j\geq 1} \zeta (y/2 ^{j}) = 1$, if $  \lvert  y\rvert \geq 4 $. 
Then set $ \zeta _0 = 1 - \sum_{j\geq 1} \zeta (y/2 ^{j})$.  
For $ f \in L ^2 (\mathbb R ^{n})$, set 
Set $ \widehat {f_j}  (\xi )= \zeta (\lvert  \xi \rvert/2 ^{j} )  \widehat f (\xi )  $, for $ j\geq 1$, 
and $ \widehat {f_0} = \zeta _0  \widehat f   $.  

Let $ \mathcal M _{\delta }$ be the maximal function in \eqref{e:full0}, and let $ \mathcal M _{\delta ,j} f = \mathcal M _{ \delta } f_j$. 
We have 
\begin{equation}  \label{e:Mk}
\mathcal M _{ \delta } f \leq \sum_{j\geq 0} \mathcal M _{ \delta , j }f. 
\end{equation}
Now, it follows from \cite{MR1949873}*{just above eqn (1.5)}, 
\begin{equation}\label{e:M_Lee}
\lVert M _{\delta ,j} \;:\; L ^{p} \mapsto L ^{q}\rVert \lesssim 2 ^{j (1-\frac 5q)}, 
\qquad \tfrac 1p +\tfrac 3q =1,\  q > \tfrac {14}3. 
\end{equation}
The exponent on $ j$ above is negative for $ \frac {14}3 < q < 5$.  At  $ q=5$, we have $ (p,q)=( \frac 52,5)$, which corresponds to  the crucial vertex $(\frac 25,\frac 15)$ of the triangle $ \mathbf F'_2$.   See Figure \ref{f:F2}. 

It again follows from \eqref{e:sig} that 
\begin{align}
 \label{e:Mk2}
\lVert  \partial_t  A_t f_j  \rVert_ {L ^2 (\mathbb R ^2 \times [1,2))}  &\lesssim   2 ^{\frac j2} \lVert f\rVert_2. 
\end{align}
As a consequence, $  A_t f_j $ continuously embeds into $ \operatorname {Lip} (\frac 14)$ with norm at most $ 2 ^{j/2}$.  That is, we have the bound 
\begin{equation*}
\lVert M _{\delta ,j} \;:\; L ^{2} \mapsto L ^{2}\rVert \lesssim \delta ^{\frac 14} 2 ^{\frac j2}. 
\end{equation*}
Interpolation with \eqref{e:M_Lee}, say with  $ p=3, q = \frac 92$, shows that 
$ (\frac 1p, \frac 1q)$ sufficiently close to $ (\frac 13, \frac 29) $, we have for a positive choice of $ \eta >0$, 
\begin{equation*}
\lVert M _{\delta ,j} \;:\; L ^{p} \mapsto L ^{q}\rVert \lesssim \delta ^{ \eta } 2 ^{ - \eta j}.  
\end{equation*}
This is summable in $ j\geq 0 $, so completes our proof.

%%%%%%%%%%%%%%%%%%%%%%%%%%%%%% SECTION  SECTION SECTION
%%%%%%%%%%%%%%%%%%%%%%%%%%%%%% SECTION  SECTION SECTION 
\section{Sharpness of the Sparse Bounds } %\label{s:}

Sharpness of the sparse bounds is not immediate from the sharpness of the $L^p$ improving estimates, as the sparse bound is defined as the largest possible sparse bound.  Nevertheless, sharpness will follow from the examples that show that the $ L ^{p}$ improving estimates are sharp.  

\begin{proposition} \label{p:sharp} Suppose that for $1\leq r,s< \infty$ satisfy $\frac1r+\frac 1s\geq 1$. 
\begin{enumerate}
\item If the sparse bound $ \lVert M _{\textup{lac}}  : (r, s)_m \rVert < \infty$  holds, 
then,  $ (\frac 1r, \frac 1s) \in \mathbf L_n$, where the last set is the triangle defined in Theorem~\ref{t:lac}. 

\item If the sparse bound $ \lVert M _{\textup{full}}  : (r, s)_m \rVert < \infty$  holds, then, $(\frac 1r, \frac 1s) \in \mathbf{F}_n$, where the latter set is the trapezium defined in Theorem \ref{t:full}.  
\end{enumerate}
\end{proposition}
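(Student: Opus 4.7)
The approach is to produce, for each nontrivial defining inequality of the target region ($\mathbf{L}_n$ in Part (1), $\mathbf{F}_n$ in Part (2)), a one-parameter family of test pairs $(f_\delta, g_\delta)$ for which $\langle M f_\delta, g_\delta\rangle$ dominates $\sup_{\mathcal{S}}\Lambda_{\mathcal{S},r,s,m}(f_\delta,g_\delta)$ asymptotically as $\delta\downarrow 0$ unless that inequality holds. These will be standard Knapp-type and tangential-plate examples familiar from the sharpness proofs for the $L^p$-improving bounds in Theorems~\ref{t:LS} and~\ref{t:S}. In each case the lower bound on $\langle Mf,g\rangle$ uses the pointwise inequality $M\geq A_r$ for a well-chosen $r$, while the sparse-form upper bound is obtained via H\"older's inequality, the disjointness of the sets $\{F_S\}$, and the standing hypothesis $1/r+1/s\geq 1$.

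For Part (1), the two slanted sides of $\mathbf{L}_n$ are $n/r+1/s\leq n$ and $1/r+n/s\leq n$. To force the first, I take $f=\mathbf{1}_{B(0,\delta)}$ and $g=\mathbf{1}_{A_\delta}$, with $A_\delta$ a bounded portion of the $\delta$-neighborhood of the unit sphere. The Knapp geometry gives $A_1 f\gtrsim \delta^{n-1}$ on $A_\delta$, hence $\langle M_{\textup{lac}} f,g\rangle \gtrsim \delta^n$. Because $\textup{supp}(f)$ and $\textup{supp}(g)$ are separated by distance $\approx 1$, only cubes $S$ with $\ell S\gtrsim 1$ contribute; summing $|S|^{1-1/r-1/s}\delta^{n/r}|F_S\cap A_\delta|^{1/s}$ over nested sparse scales by H\"older, with $\sum_S |F_S|\leq |A_\delta|\approx\delta$, yields $\sup_{\mathcal{S}}\Lambda \lesssim \delta^{n/r+1/s}$, and letting $\delta\downarrow 0$ forces the inequality. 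Swapping the roles of $f$ and $g$ gives the other slanted side.

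For Part (2), the inequality $1/r+n/s\leq n$ (side $P_1 P_4$) follows exactly as above since $M_{\textup{full}}\geq A_1$. For the vertical side $P_2 P_3$, namely $1/r\leq (n-1)/n$, I take $f=\mathbf{1}_{B(0,\delta)}$ and $g=\mathbf{1}_{B(0,1)}$; the continuum of scales now available gives $M_{\textup{full}} f(x)\gtrsim (\delta/|x|)^{n-1}$ on $\delta\leq |x|\leq 1$, and so $\langle M_{\textup{full}} f,g\rangle \gtrsim \delta^{n-1}$. A geometric summation over nested dyadic cubes $S_j$ (convergent because $1-1/r\geq 0$) bounds the sparse form by $\lesssim \delta^{n/r}$, forcing $n/r\leq n-1$.

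The most delicate case is the Schlag side $P_3 P_4$, which yields the inequality $(n+1)/r+(n-1)/s \leq 2(n-1)$. I take $f=\mathbf{1}_R$ on a thin plate $R=[0,\delta^2]\times [0,\delta]^{n-1}$ near the origin (so $|R|\approx \delta^{n+1}$), and $g=\mathbf{1}_B$ on a dual tangential ``bad set'' $B\subset \bigcup_{r\in[1,2]}(R+r e_1)$ of measure $|B|\approx \delta^{n-1}$ near $(1,0,\ldots,0)$. A standard tangent-sphere calculation shows that for each $x\in B$ the appropriately chosen sphere $S(x,r)$ intersects $R$ in $(n-1)$-dimensional measure $\approx\delta^{n-1}$, so $A_r f(x)\gtrsim\delta^{n-1}$ and $\langle M_{\textup{full}}f,g\rangle \gtrsim \delta^{2(n-1)}$. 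Since the supports are unit-separated, only cubes of side length $\gtrsim 1$ contribute to the sparse form, and a single-cube computation gives $\sup_{\mathcal{S}}\Lambda \lesssim \delta^{(n+1)/r+(n-1)/s}$; the comparison closes the argument. The main obstacle is precisely this Schlag step: identifying the correct bad set, verifying the measure estimate $|B|\approx \delta^{n-1}$, and proving the pointwise lower bound on $A_r f$ there, which is essentially the computation that saturates Schlag's endpoint $P_4'$ in Theorem~\ref{t:S}.
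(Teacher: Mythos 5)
Your proof is correct, and over most of the boundary it coincides with the paper's own argument: the ball/annulus pair together with the self-adjointness swap $\langle A_1 f,g\rangle=\langle f,A_1 g\rangle$ is exactly how the paper extracts the two constraints $n/r+1/s\leq n$ and $1/r+n/s\leq n$ cutting out $\mathbf L_n$ (and the side $P_1P_4$ of $\mathbf F_n$), and your tangential plate is the paper's Knapp example $R_1=[-C\sqrt\delta,C\sqrt\delta]^{n-1}\times[-C\delta,C\delta]$, $R_2=[-\sqrt\delta,\sqrt\delta]^{n-1}\times[\frac43,\frac53]$ merely reparametrized ($\delta\mapsto\delta^2$), yielding the same restriction $(n+1)/r+(n-1)/s\leq 2(n-1)$ of \eqref{e:quad}. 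The one genuinely different step is the side $P_2P_3$: the paper invokes Stein's function $h(x)=\mathbf 1_{|x|<1}|x|^{1-n}(\log|x|)^{-1}$, for which $M_{\textup{full}}h=\infty$ on a set of positive measure, combined with the implication that an $(r,s)$ sparse bound forces a weak-type $L^r$ bound; you instead test directly on $f=\mathbf 1_{B(0,\delta)}$, $g=\mathbf 1_{B(0,1)}$ and compute $\langle M_{\textup{full}}f,g\rangle\gtrsim\delta^{n-1}$ against $\sup_{\mathcal S}\Lambda\lesssim\delta^{n/r}$. Your route is more self-contained (no divergence example and no sparse-implies-weak-type step), but it forfeits the single-cube reduction that the paper's separated-support examples enjoy via the principle \eqref{e:ONE}: since here $\operatorname{supp} f\subset\operatorname{supp} g$, cubes at every scale between $\delta$ and $1$ contribute, and your geometric summation needs two points made explicit. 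First, at each scale $2^j\delta$ the disjointness of the sets $E_S$ with $\lvert E_S\rvert>\frac14\lvert S\rvert$ permits only $O(1)$ sparse cubes meeting $B(0,\delta)$, so $\sum_j (2^j\delta)^{n(1-1/r)}\delta^{n/r}$ is indeed dominated by the top scale when $r>1$; second, cubes of side below $\delta$ contribute at most $\sum_{S\subset Q}\lvert S\rvert\lesssim\delta^n\leq\delta^{n/r}$ by the Carleson packing property of sparse collections, a case you do not mention. With those remarks added (and working in the regime $1/r+1/s>1$, where $\sup_{\mathcal S}\Lambda$ is finite for fixed $\delta$ --- the borderline $1/r+1/s=1$ is degenerate in your argument and in the paper's alike), your conclusion $1/r\leq (n-1)/n$ is sound, and the four inequalities you collect do cut out exactly the trapezium $\mathbf F_n$, so both parts of the proposition follow.
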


We recall this  elementary fact, \cite{2016arXiv161001531L}*{Lemma 4.7}. 
For all $ 1\leq r, s < \infty $,  there is a constant $ C$ so that for all   $ f$ and $ g$, 
there  a sparse form $ \Lambda _{0}$ so that 
\begin{equation} \label{e:ONE}
\sup _{\mathcal S} \Lambda _{\mathcal S, r, s} (f,g) \leq C \Lambda _0 (f,g).  
\end{equation}
For the pairs $ f, g$ that we describe below, it will be very easy to verify this principle.  The largest  sparse form $ \Lambda _0$ will consist of a single cube, 
namely  one that contains the support of the functions defined below, and is of minimal side length. 

\smallskip 

%%%%%%%%%%%%%%% Figure
\begin{figure}
\begin{tikzpicture}
\filldraw (0,0) circle (.1em) node[right] {$ x$}; 
\draw[dashed] (.05,.05) circle (3em); 
\draw   (0,0) circle (3em) ; \draw   (0,0) circle (3.25em) ; 
\draw (1.15,1.15) node {$ f_ \delta $}; 
\end{tikzpicture}

\caption{The example showing sharpness of the bounds in Theorem~\ref{t:littman}. The function $ f_ \delta $ is the indicator of the thin annulus, of width $ \delta $.  For a point $ x$ within say $ \delta /2$ of the center of the annulus, one has $ A_1 f_ \delta  (x) \geq c$.  The dashed circle is centered at $ x$, and has radius $ 1$. At least $ \frac 14 $ of the dashed circle is inside the support of $ f _{\delta }$. This leads the inequality \eqref{e:zlac}.}  
\label{f:littman}
\end{figure}
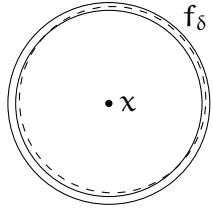
%%%%%%%%%%%%%%% Figure

 \begin{proof}[Proof of Proposition \ref{p:sharp}.1.]
 We begin with the lacunary maximal operator, $  M _{\textup{lac}}$,  and the $ L ^{p}$-improving bounds of Littman \cite{MR0358443} and Strichartz 
\cite{MR0256219}.  For $ 0 < \delta < \tfrac 14$, let $ f_ \delta= \mathbf 1_{ \lvert \,\lvert  x\rvert-1 \rvert < \delta }$ be the indicator of a thin annulus around the unit circle.  
Note that for small absolute constant $ c$, that we have 
\begin{equation}\label{e:LS1}
 A_1 f_\delta  (x)\ge  c g_\delta  (x) =c\mathbf 1_{  \lvert  x\rvert < c \delta  }.  
\end{equation}
This  example is illustrated in Figure \ref{f:littman}. It   establishes the sharpness of exponents $ r$ and $ s$ in  Theorem~\ref{t:littman}.  
Suppose that $ M _{\textup{lac}}$ satisfies an $ (r,s)$-bound, where $ 1/r+1/s >1$.   We then have 
\begin{equation*}
\delta ^{n} \lesssim \langle A_1 f_\delta,  g_\delta   \rangle 
 = \langle  f_\delta , A_1 g_\delta   \rangle
\lesssim \min\{\Lambda _{\mathcal S, r,s} (f_\delta ,g_\delta ) , \Lambda _{\mathcal S', s,r} (f_\delta ,g_\delta )) .  
\end{equation*}
for some choice of sparse collections $ \mathcal S$ and $ \mathcal S'$. 
Note that we have two bounds on the right, due to the convolution structure of the question. 

But each cube in the collections $ \mathcal S$ and $ \mathcal S'$ should intersect the support of $ f$ and of $ g$.  That is, we can assume that $ \{ x \;:\; \lvert  x\rvert <2 \} \subset  Q$, for each $Q\in \mathcal{S}$.  But then, the contribution of such cubes decreases as the side length of the cube increases. 
So, it suffices  to have $ \mathcal S$ to consist of just  a single cube $ Q$ of side length, 2 say. 
Our assumption leads to the conclusion 
\begin{equation*}
\delta  ^{n} \lesssim 
 \langle A_1 f_\delta,  g_\delta   \rangle 
\lesssim  \min \{\lVert f_\delta \rVert_r \lVert g_\delta \rVert_s, 
\lVert f_\delta \rVert_s \lVert g_\delta \rVert_r\}
\lesssim  \delta ^{ \max \{ \frac 1r+ \frac ns,  \frac nr+\frac 1s\} }.    
\end{equation*}
We conclude that we need to have the inequality below, which  tells us that $ (\frac 1r, \tfrac 1s) \in \mathbf L_n$.   
\begin{equation} \label{e:zlac}
 \max \{ \tfrac 1r+ \tfrac ns,  \tfrac nr+\tfrac 1s\} \leq n . 
\end{equation}
 And so, we cannot do better than the $ L ^{p}$-improving bounds Littman and Strichartz for the lacunary maximal function.  
 \end{proof}

\medskip  
%%%%%%%%%%%%%%% Figure
\begin{figure}
\begin{tikzpicture}
\draw  (0,0) rectangle (0.5,4) node[right] {$ R_1$} ;  
\draw[|<->|] (0,-0.25) -- (0.5, -0.25) node[below,midway] {$C \delta $}; 
\draw[|<->|]  (-.3, 0) -- (-.3,4)  node[midway,left] {$ C\sqrt \delta $}; 
\draw (5,1) rectangle (9,3) ;  \draw (7, 3.25) node {$ R_2$}; 
\draw [|<->|] (5,.75) -- (9,.75) node[midway,below] {$ \approx 1$}; 
\draw [|<->|] (.55,2) -- (4.95,2) node[midway,below] {$ \approx 1$}; 
\draw [|<->|] (9.25,1) -- (9.25,3) node [midway,right] {$  \sqrt \delta $}; 
\filldraw (6,2) circle (.1em) node[right]  {$ x$}; 
\draw[dashed]  (.5,-.5) arc (190:170:15);  
\end{tikzpicture}
\caption{An example for the operator $ \tilde M$.  The rectangle $ R_1$ is on the left, and 
at each point $ x \in R_2$, there is a circle of radius $ 1\leq r \leq 2$ which intersects a substantial portion of the rectangle $ R_1$, as indicated by the dashed arc of a circle.
We have $ \tilde M \mathbf 1_{R_1} (x) \gtrsim \delta ^{\frac {n-1}2 }$. The assumed $ (r,s)$ bound leads to the restriction \eqref{e:quad}.} 
\label{f:full}
\end{figure}
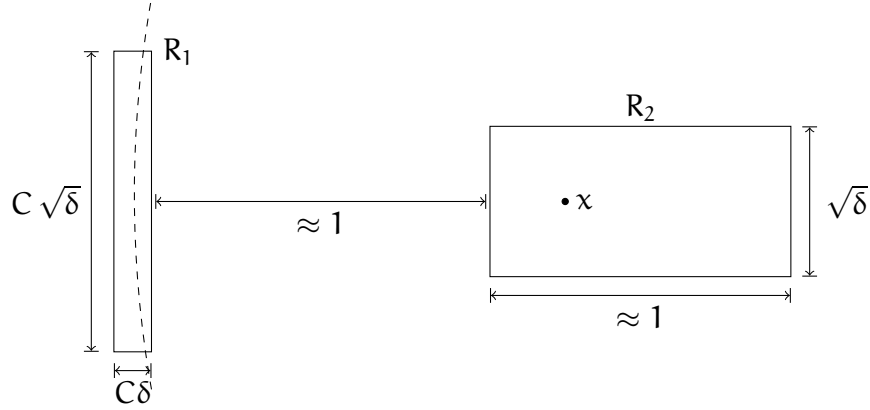
%%%%%%%%%%%%%%% Figure

\begin{proof}[Proof of Proposition \ref{p:sharp}.2.]
We turn to the case of the full spherical maximal function.  The sharpness of the trapezium in Theorem~\ref{t:full_improve} is given by three examples.  One of these is the thin annulus example just used, and this demonstrates the sharpness along the line from $ P_1 = (0,1)$ to $ P_4 = (\frac {n ^2 -n} {n ^2 +1 }, \frac {n^2-n+2} {n ^2 +1})$.  Here, we are referring to the trapezium $\mathbf{F}_n$ in Figure \ref{f:F}.  

The second example is a Knapp type example illustrated in Figure~\ref{f:full}.  Define two rectangles by 
\begin{equation*}
R_1 = [-C\sqrt \delta , C\sqrt \delta ] ^{n-1} \times [-C \delta , C\delta] , \qquad 
R_2 = [-\sqrt \delta , \sqrt \delta ] ^{n-1} \times [ \tfrac 43, \tfrac 53 ].  
\end{equation*}
Then, note that the localized maximal function $ \tilde M $ applied to  $ \mathbf 1_{R_1}$  satisfies 
$
\tilde M \mathbf 1_{R_1} \gtrsim  \delta ^{ \frac {n-1}2} \mathbf 1_{R_2} 
$.
Then, assuming the $ (r,s)$ sparse bound for the full maximal function, we have 
\begin{equation*}
\delta ^{ n-1} \lesssim \langle \tilde M f, g\rangle 
\lesssim \Lambda _{\mathcal S , r,s} (\mathbf 1_{R_1}, \mathbf 1_{R_2}).  
\end{equation*}
The sparse form on the right is largest, up to a constant, taking $ \mathcal S$ to consist of  a single cube of bounded side length, which contains  the two rectangles $ R_1$ and $ R_2$.  We deduce that 
\begin{equation*}
\delta ^{ n-1} \lesssim  \lvert  R_1\rvert ^{1/r} \lvert  R_2\rvert ^{1/s} \lesssim \delta ^{\frac {n+1} {2r}+ \frac {n-1} {2s}}.  
\end{equation*}
From this, we see that we necessarily must have 
\begin{equation} \label{e:quad}
\tfrac {n+1} {r}+ \tfrac {n-1} {s} \leq 2 (n-1). 
\end{equation}
This gives the restriction on the line from the point $P_4 $ to $ P_3 = (\frac {n-1}{n}, \frac {n-1}{n})$.  

\smallskip 

A third example of Stein is the function  $ h (x) = \mathbf 1_{ \lvert  x\rvert<1 } \lvert x  \rvert ^{1-n}  (\log \lvert  x\rvert ) ^{-1}  $, we have $ M  _{\textup{full}}h (x)$ is infinite on a set of positive measure. Hence, $ M_{\textup{full}} $ is unbounded on $ L ^{p}$, for $ 1< p \leq \frac n {n-1}$. Now, if $ M _{\textup{full}}$ satisfies an $ (r,s)$ bound for any $ 1< r \leq \frac n {n-1}$ and any finite $ s$, it would follow that $ M _{\textup{full}}$ is of weak-type $ L ^{r}$, which is impossible.   This shows the sharpness of the line from $ P_2$ to $ P_3$.  
\end{proof}

These examples also show that the `continuity' condition can not hold at the critical indexes for the $L^p$ improving inequalities.  

\begin{proposition}   Suppose that for $1\leq r,s< \infty$ satisfy $\frac1r+\frac 1s >  1$. 
\begin{enumerate}
\item If the  inequality \eqref{e:littman2}  holds,  then $ (\tfrac 1r, \tfrac 1s)$ is in the interior of $ \mathbf L_n$, 
the triangle defined in Theorem~\ref{t:lac}. 

\item If the inequality \eqref{e:full_improve} holds,  then, $(\frac 1r, \frac 1s) $ is in the interior of $ \mathbf{F}_n$, where the latter set is the trapezium defined in Theorem \ref{t:full}.  
\end{enumerate}
\end{proposition}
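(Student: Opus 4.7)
The plan is to reuse the three sharpness examples from the proof of Proposition~\ref{p:sharp}---the thin annulus, the Knapp rectangles, and Stein's logarithmically singular function---but to take the translation parameter $y$ at an appropriate $\delta$-dependent scale. This converts the sharpness obstruction for the plain $L^p$-improving bound (which only rules out the strict exterior) into an obstruction for the continuity inequality, which is sensitive to the factor $|y|^\eta$ and rules out the boundary itself. Throughout, the main point is that the boundary of $\mathbf L_n$ (resp.\ $\mathbf F_n$) is precisely where the known example saturates, so extracting even an infinitesimal gain in $|y|$ forces a contradiction.

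For part~(1), the symmetry $(r,f_1)\leftrightarrow(s,f_2)$ of \eqref{e:littman2} (using that $A_1$ is self-adjoint up to reflection) reduces matters to the edge $1/r+n/s=n$ joining $P_1$ to the critical vertex. I take $f_1=\mathbf 1_{R_1}$ with $R_1=\{||x|-1|<\delta\}$, $f_2=\mathbf 1_{B(0,c\delta)}$, and $Q$ the unit cube. A direct geometric computation (the sphere $|w-z|=1$ intersects $R_1$ in a band of angular aperture $\sim\delta/|z|$) shows $A_1\mathbf 1_{R_1}(z)\sim\delta/|z|$ for $|z|$ between $2\delta$ and $\tfrac12$, whereas $A_1\mathbf 1_{R_1}\equiv1$ on $B(0,\delta)$. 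Taking $|y|=C\delta$ with $C$ large but fixed makes $B(0,c\delta)$ and $B(-y,c\delta)$ disjoint, and
\begin{equation*}
|\langle (A_1-\tau_y A_1)f_1,f_2\rangle|=\Bigl|\int_{B(0,c\delta)}A_1f_1-\int_{B(-y,c\delta)}A_1f_1\Bigr|\gtrsim\delta^n(1-C^{-1}).
\end{equation*}
The right side of \eqref{e:littman2} is $|y|^\eta\langle f_1\rangle_{Q,r}\langle f_2\rangle_{Q,s}\lesssim\delta^{\eta+1/r+n/s}$; on the critical edge this is $\delta^{\eta+n}$, which forces $\eta\le0$.

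For part~(2) there are three boundary edges to rule out. The $P_1P_4$ edge is handled exactly as above, via the thin-annulus argument, since the construction embeds into the full maximal operator. For the Knapp edge $P_3P_4$, I use the rectangles $R_1,R_2$ from Proposition~\ref{p:sharp}.2 and translate $R_1$ laterally by $y$ with $|y|\sim\sqrt\delta$. A parabolic-approximation analysis of the tangent sphere shows that for $|y|$ just above $2\sqrt\delta$ no sphere of radius in $[1,2]$ centered at a point of $R_2$ can be tangent to $R_1+y$; combined with the corresponding tilt estimate for non-tangent intersections, this yields $\sup_t|A_t\mathbf 1_{R_1}(x)-A_t\mathbf 1_{R_1+y}(x)|\sim\delta^{(n-1)/2}$ on $R_2$. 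Plugging into \eqref{e:full_improve_cont} and using the Knapp constraint $(n+1)/r=(n-1)(s+1)/s$ at the boundary gives $\eta\le0$, as desired.

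The $P_2P_3$ edge $1/r=(n-1)/n$ is the delicate one and uses a chaining argument rather than a direct example. For $f\in L^r$ supported in $B(0,R)$ and a unit vector $y_0$, telescope
\begin{equation*}
A_tf(x)=\tau_{Ny_0}A_tf(x)+\sum_{k=0}^{N-1}\tau_{ky_0}(A_t-\tau_{y_0}A_t)f(x).
\end{equation*}
Choosing $N>2(R+2)$ kills the first term on $B(0,R+2)\supset\operatorname{supp}(\tilde Mf)$. Taking $\sup_{t\in[1,2]}$, then $L^s$-norm, and invoking \eqref{e:full_improve_cont} with $|y_0|=1$ gives $\|\tilde Mf\|_s\lesssim N\|f\|_r\lesssim R\|f\|_r$ for every compactly supported $f$. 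Apply this to Stein's $h(z)=\mathbf 1_{|z|<1/2}|z|^{1-n}|\log|z||^{-1}\in L^{n/(n-1)-\varepsilon}$: a spherical-integral computation near the pole where the sphere $|w-x|=|x|$ passes through the origin shows
\begin{equation*}
A_{|x|}h(x)\sim|x|^{1-n}\int_0^{1/2}u^{-1}|\log u|^{-1}\,du=+\infty
\end{equation*}
for every $x$ with $|x|\in[1,2]$, hence $\tilde Mh\equiv+\infty$ on a set of positive measure, contradicting the chained bound. Therefore $1/r<(n-1)/n$ strictly.

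I expect the main obstacle to be the Knapp edge estimate on $P_3P_4$: one must carefully verify that with $|y|$ tuned to $\sqrt\delta$ the translated sphere tangent points miss $R_1+y$, so that the pointwise lower bound survives after taking the supremum over $t$. The other two edges are essentially direct adaptations of the arguments in Proposition~\ref{p:sharp}.
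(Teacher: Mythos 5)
Your overall strategy is the same as the paper's (the paper's own proof is little more than a sketch invoking exactly these three examples), and two of your three edges are handled correctly and in more detail than the paper provides. In part (1), taking $|y|=C\delta$ instead of the paper's choice $\delta\ll|y|\ll1$ works because you verify the quantitative decay $A_1\mathbf 1_{R_1}(z)\sim\delta/|z|$. On the edge $P_2P_3$ your telescoping argument, showing that \eqref{e:full_improve_cont} plus compact support yields $\lVert \tilde M f\rVert_s\lesssim R\lVert f\rVert_r$, is a clean way to make precise the paper's terse appeal to Stein's example. One small but necessary correction there: you assert $h\in L^{n/(n-1)-\varepsilon}$, which only excludes the open region $1/r>(n-1)/n$; to kill the closed edge $1/r=(n-1)/n$ you need $h\in L^{n/(n-1)}$ itself, which is true precisely because the logarithmic factor has exponent $n/(n-1)>1$, so that $\int_0^{1/2}u^{-1}\lvert\log u\rvert^{-n/(n-1)}\,du<\infty$.

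The genuine gap is the Knapp edge $P_3P_4$. Your claim that for lateral $|y|$ just above $2\sqrt\delta$ ``no sphere of radius in $[1,2]$ centered at a point of $R_2$ can be tangent to $R_1+y$'' is false for the rectangles of Proposition~\ref{p:sharp}: $R_1$ has lateral half-width $C\sqrt\delta$ with $C$ a large fixed constant, a point $x=(x',x_n)\in R_2$ has $|x'|\le\sqrt\delta$, and the sphere of radius $t=x_n$ is tangent to the slab at $(x',0)$, which remains deep inside $R_1+y$ after a shift of size $2\sqrt\delta\ll C\sqrt\delta$. Worse, at the tangent scale the cap (of lateral radius $\sim\sqrt{C\delta}$) is then contained in \emph{both} $R_1$ and $R_1+y$, so $A_{x_n}\mathbf 1_{R_1}(x)-A_{x_n}\mathbf 1_{R_1+y}(x)$ vanishes to leading order: a sub-width lateral shift produces exactly the cancellation that the continuity inequality exploits, and your pointwise lower bound $\sim\delta^{(n-1)/2}$ on $R_2$ does not follow from the stated mechanism (the vague ``tilt estimate'' would have to carry the whole argument, on only part of $R_2$). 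The fix, and what the paper's ``similar argument to the one above'' intends, is to translate \emph{transversally}: take $y$ in the $e_n$ direction with $\delta\ll|y|\ll1$. Then for $x\in R_2$ the sphere of radius $t=x_n$ lies within height $\lesssim C^2\delta\ll|y|$ of the hyperplane $\{z_n=0\}$ over the lateral range $|z'|\lesssim C\sqrt\delta$, hence misses $R_1+y$ entirely, while still capturing a cap of measure $\sim\delta^{(n-1)/2}$ inside $R_1$; so $\sup_t\lvert A_t\mathbf 1_{R_1}(x)-\tau_yA_t\mathbf 1_{R_1}(x)\rvert\gtrsim\delta^{(n-1)/2}$ on all of $R_2$. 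Pairing with $\mathbf 1_{R_2}$ and using equality in \eqref{e:quad} gives $c\le C'|y|^\eta$ uniformly in $\delta$, a contradiction as $y\to0$. With that replacement, and the $L^{n/(n-1)}$ correction above, your proof is complete and coincides with the paper's intent.
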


\begin{proof}
This is a corollary to the fact that the relevant examples in the $L^p$ improving estimates are supported on small sets. 

1. Suppose  that $ (\tfrac 1r, \tfrac 1s)$ is on the boundary of $ \mathbf L_n$, which is to say that it satisfies equality in \eqref{e:zlac}.    We have the assumed inequality  \eqref{e:littman} with $\lvert y\rvert $ much smaller than one. Apply it to the function $f_\delta$, where $\delta$ is much smaller than $\lvert y\rvert $.  It follows that there is no cancellation after translation by $ y$, so that 
\begin{align*}
\lVert A_1  f_ \delta  - \tau _y A_1 f_\delta \rVert_s 
\simeq \lVert f_\delta\rVert_r \lesssim \lvert  y\rvert ^{\eta }  \lVert f_\delta\rVert_r .  
\end{align*}
This is a contradiction. 

\smallskip 

2.  Suppose  that $ (\tfrac 1r, \tfrac 1s)$ is on the boundary of $ \mathbf F_n$, and that we have the assumed inequality \eqref{e:full_improve}.  It follows from the first part of the argument that $  (\tfrac 1r, \tfrac 1s)$ 
cannot lie on the line from $P_1$ to $ P_4$, where we are referring to the points in Figure~\ref{f:F}.  
By the example of Stein described above, it cannot lie on the line from $ P_2$ to $ P_3$.  And, by a similar argument to the one above, but using the example from Figure~\ref{f:full}, it also follows that $  (\tfrac 1r, \tfrac 1s)$ cannot lie on the line from $ P_3$ to $ P_4$.  This is a contradiction, so the argument is complete.  

\end{proof}
%%%%%%%%%%%%%%%%%%%%%%%%%%%%%% SECTION  SECTION SECTION
%%%%%%%%%%%%%%%%%%%%%%%%%%%%%% SECTION  SECTION SECTION 
\section{Weighted Inequalities} \label{s:weighted}

The maximal function $ M _{\textup{lac}}$ applied to the indicator of a ball  $ B$ of radius $ 1$ centered at the origin 
is dominated by 
\begin{equation*}
M _{\textup{lac}} \mathbf 1_{B} (x)\lesssim 
\mathbf 1_{2B} (x) + \sum_{k=1} ^{\infty }  2 ^{- k (n-1)} \mathbf 1_{  \lvert  \lvert  x\rvert-2 ^{k} \rvert\leq 2 }.
\end{equation*}
Thus, there is no reason to think that Muckenhoupt weights are the correct tool to understand the behavior of this (or the full) spherical maximal function in weighted spaces.
(See Figure~\ref{f:full} for an example showing that the full supremum is poorly adapted to Muckenhoupt weights.) 

Nevertheless, the question of weighted inequalities for weights of Muckenhoupt type has attracted interest \cites{MR1373065,MR1922609}. And the sparse bounds are especially efficient  for such weights. We detail here some of the implications of our main theorems in this direction. We will see  that our sparse bound contains the best known prior bound for $ M _{\textup{full}}$, and yields new information.  The full implications would be a little technical, and so we do not develop them here. 

We indicate here how easy it is to prove $L^p$ bounds for sparse forms, and leave the details of the weighted case to the references. 
The familiar $L^p$ bounds for the spherical maximal functions are seen to trivially follow from our sparse bounds.  

\begin{proposition}  \label{p:pp}
Let $1\leq r < p < s'<\infty$.  We have the inequality 
\begin{equation}
\Lambda_{r,s}(f,g) \lesssim \lVert f\rVert_p \lVert g\rVert_{p'} . 
\end{equation}
\end{proposition}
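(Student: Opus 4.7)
The plan is to dominate the sparse form pointwise by the classical Hardy–Littlewood maximal operators of $|f|^r$ and $|g|^s$ and then apply Hölder's inequality, leveraging the sparsity condition $|E_S|>\tfrac14|S|$.

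First I would exploit sparsity to replace $|S|$ by $|E_S|$ in the sum, since $|S|\le 4|E_S|$. Next, for each $x\in E_S$ note that $\langle f\rangle_{S,r}\le M_r f(x):=(M|f|^r(x))^{1/r}$ and similarly $\langle g\rangle_{S,s}\le M_s g(x)$, where $M$ is the Hardy–Littlewood maximal function. Since the sets $\{E_S\}_{S\in\mathcal S}$ are pairwise disjoint, I can absorb the sum over $S$ into a single integral:
\begin{align*}
\Lambda_{r,s}(f,g)\le 4\sum_{S\in\mathcal S}|E_S|\langle f\rangle_{S,r}\langle g\rangle_{S,s}
\le 4\int_{\mathbb R^n} M_r f(x)\, M_s g(x)\,dx.
\end{align*}

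Then I would apply Hölder's inequality with exponents $p$ and $p'$ to get
\begin{equation*}
\int M_r f\cdot M_s g\,dx\le \lVert M_r f\rVert_p\,\lVert M_s g\rVert_{p'}.
\end{equation*}
The hypothesis $r<p$ gives boundedness of $M_r$ on $L^p$ (since $M$ is bounded on $L^{p/r}$ and $p/r>1$), hence $\lVert M_r f\rVert_p\lesssim\lVert f\rVert_p$. Analogously, $p<s'$ is equivalent to $p'>s$, which gives $\lVert M_s g\rVert_{p'}\lesssim\lVert g\rVert_{p'}$. Combining these bounds yields the claimed inequality.

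There is no real obstacle here; the argument is entirely standard and is the reason sparse bounds automatically recover $L^p$ estimates. The only point to be careful about is that the sparse form's definition uses $\mathbf 1_{F_S}$ against $g$ (in the $m$-sparse version) or $\mathbf 1_S$ (otherwise); in either case $\langle g\mathbf 1_{F_S}\rangle_{S,s}\le \langle g\rangle_{S,s}$, so the argument above applies without modification. The hypothesis $r<p<s'$ is also sharp for this pointwise-by-maximal-operators approach, which is exactly what the proposition claims.
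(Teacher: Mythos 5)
Your argument is correct and coincides with the paper's own proof: both dominate each term using the disjoint sets $E_S$, bound pointwise by the maximal operators $M_r f$ and $M_s g$, and conclude via H\"older together with the $L^{p/r}$ and $L^{p'/s}$ boundedness of the Hardy--Littlewood maximal function, exactly as the hypothesis $r<p<s'$ permits. Your closing remark that $\langle g\mathbf 1_{F_S}\rangle_{S,s}\leq \langle g\rangle_{S,s}$ handles the $m$-sparse variant is a small but accurate addition.
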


\begin{proof}
The notation for the sparse form is in \eqref{e:Lam}. Recall that to each cube $Q$ in the sparse collection $\mathcal{S}$, there is a set 
$E_Q\subset Q$, with $\lvert E_Q\rvert \geq \frac 12 \lvert Q\rvert$, so that the sets $\{E_Q \,:\, Q\in\mathcal{S}\}$ are pairwise disjoint. 
Thus 
\begin{align*}
\Lambda _{r,s} (f,g) &< 2 \int \sum_{Q\in \mathcal S} \mathbf 1_{E_Q}  \langle f \rangle _{Q,r} \langle g \rangle _{Q,s}   \; dx 
\\
&\leq \int  M_r f \cdot M_s g \;dx \lesssim \lVert M_r f\rVert_p \lVert M_s g\rVert _{p'} \lesssim \lVert f\rVert_p \lVert g\rVert _{p'}. 
\end{align*}
Above $M_r f = \sup_Q \langle f \rangle _{Q,r}  \mathbf 1_{Q}$ is the maximal function with $r$th powers.  
\end{proof}

A \emph{weight} is a function $ w (x) >0$ a.e., which is the density of a measure on $ \mathbb R ^{n}$, also written as $ w (E ) = \int _{E} w \;dx$.  For $ 1< p < \infty $, the dual space to $ L ^{p} (w)$ (with respect to Lebesgue measure) is $ L ^{p'} (\sigma )$, where $ p' = \frac p {p-1}$ and $ \sigma = w ^{1-p'}$.   Note that $ w \cdot \sigma ^{p-1} \equiv 1$.  
A weight $ w \in A_p$ if this equality holds in an average sense, uniformly over all locations and scales. Namely, define 
\begin{equation*}
[w] _{A_p} =\sup _{Q}  \langle w \rangle_Q \langle \sigma  \rangle_Q ^{p-1}  < \infty , \qquad  \sigma = w ^{1-p'}.  
\end{equation*}
Above, the supremum is over all cubes $ Q$.    At $ p=1$, we define 
\begin{equation}\label{e:A1}
[w] _{A_1} =\sup _{Q} \sup _{x\in Q} \frac {  \langle w \rangle_Q } {w (x)}. 
\end{equation}
A weight $ w $ is in the \emph{reverse H\"older} class $ RH _{r}$,  $1\leq r < \infty$,  if 
\begin{equation*}
[w] _{RH _{r} }  =\sup _{Q}  \frac {\langle w \rangle _{Q,r}} {\langle w \rangle_Q} < \infty .  
\end{equation*}
Qualitatively, the conditions of a weight $w$ being in the intersection of $A_p$ and reverse H\"older spaces is the same as $w$ having a factorization $w \in A_1 ^{\alpha} A_1^{\beta} = 
\{ u_1 ^\alpha u_2 ^\beta \;:\; u_1, u_2 \in A_1\}$. 
This is made precise in this proposition.

%%%%%%%%%%%%%%%%%%%%%%%%%%%%%% PROPOSITION PROPOSITION PROPOSITION
\begin{proposition}\label{p:factor} Let $ u_1 , u_2 \in A_1$, and let $ \rho >0$, and $ 1< r < p < \infty $.  
We have 
\begin{equation}\label{e:factor}
A_1 ^{\frac 1 \rho } A_1 ^{-\frac pr +1  } = A _{\frac pr } \cap RH _{\rho }.  
\end{equation}
\end{proposition}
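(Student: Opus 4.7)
The plan is to reduce the identity to two classical theorems about Muckenhoupt weights and then chase exponents. The two ingredients I would invoke are (i) Jones's factorization theorem, which asserts that for $1<q<\infty$ one has $A_q = A_1 \cdot A_1^{1-q}$ in the sense that every $v\in A_q$ admits a factorization $v=v_0 v_1^{1-q}$ with $v_0,v_1\in A_1$ and conversely, and (ii) the Johnson--Neugebauer characterization of the intersection of a Muckenhoupt class and a reverse Hölder class, namely
\begin{equation}\label{e:JN}
w\in A_{p_0}\cap RH_\rho \iff w^\rho \in A_{q}, \qquad q := \rho(p_0-1)+1,
\end{equation}
for $1<p_0<\infty$ and $1\leq \rho<\infty$.

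With these tools in hand, I would apply \eqref{e:JN} with $p_0 = p/r$ and the given $\rho$, so that $q = \rho(p/r - 1) + 1$. Thus $w\in A_{p/r}\cap RH_\rho$ is equivalent to $w^\rho\in A_q$. By Jones factorization, this last condition is in turn equivalent to the existence of $u_1,u_2\in A_1$ with
\begin{equation*}
w^\rho = u_1 \, u_2^{1-q}.
\end{equation*}
Taking $\rho$-th roots and computing the exponent via $(1-q)/\rho = -(p/r - 1) = 1 - p/r$, this becomes
\begin{equation*}
w = u_1^{1/\rho} \, u_2^{1-p/r},
\end{equation*}
which is exactly the statement $w\in A_1^{1/\rho} A_1^{-p/r+1}$. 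Running the equivalences in both directions closes the proof.

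The only substantive step is \eqref{e:JN}; Jones's factorization is cited as a black box. I would not reprove \eqref{e:JN} in full, but would indicate that the forward direction uses the openness of $RH_\rho$ to produce some reverse Hölder exponent $>\rho$, from which a direct computation of the $A_q$ constant of $w^\rho$ succeeds, while the reverse direction is an application of Hölder's inequality to the $A_q$ condition for $w^\rho$ to recover simultaneously the $A_{p/r}$ and $RH_\rho$ constants of $w$. The only place where care is needed is in tracking the quantitative constants if one wants a sharp statement, but since the proposition is qualitative, this is not an obstacle.
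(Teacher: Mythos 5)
Your proposal is correct and matches the paper's own proof, which likewise cites exactly these two facts --- Jones's factorization $A_q = A_1\cdot A_1^{1-q}$ and the equivalence $w\in A_{p/r}\cap RH_\rho \iff w^\rho\in A_{\rho(p/r-1)+1}$ --- and combines them; you have simply written out the exponent arithmetic $(1-q)/\rho = 1-p/r$ that the paper leaves implicit.
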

%%%%%%%%%%%%%%%%%%%%%%%%%%%%%% PROPOSITION PROPOSITION PROPOSITION

%%%%%%%%%%%%%%%%%%%%%%%%%%%%%% PROOF PROOF PROOF
\begin{proof}
These two facts are well known.
(1) A weight in $ A_p$ can be factored into the product of $ A_1$ weights 
\begin{equation}\label{e:1factor}
w \in A_p  \quad \Longleftrightarrow \quad  w = u_1u_2 ^{1-p}, \quad u_1 u_2 \in A_1. 
\end{equation}
(2) The condition $ w\in  A _{p/r } \cap RH _{\rho }$ is equivalent to $ w ^{\rho } \in A _{\rho (p/r-1)+1}$.  
Combining these two facts proves the proposition. 
\end{proof}
%%%%%%%%%%%%%%%%%%%%%%%%%%%%%% PROOF PROOF PROOF

We focus on qualitative aspects of weighted inequalities for the sparse maximal functions.  
While quantitative estimates are available, and not too hard to prove, we think that what we can prove right now is improvable. 
(See \S \ref{s:msparse}.)  Set $\mathcal {L}_p$ to be those weights $w$ for which $M _{\textup{lac}}$ maps $L^p(w)$ to $L^p(w)$, 
for $1< p < \infty$.  Use the same type of notation $\mathcal{F}_p$ for $M _{\textup{full}}$.

 We have these two corollaries 
to our sparse bounds for the lacunary and full spherical maximal operators. These are obtained by combining our main theorems with the bounds in Theorem \ref{t:bfp}.  
As we only seek qualitative results, and the conditions of $ A_p$ and $ RH _{r}$ are open, 
we are free to work on the boundary of the figures $ \mathbf L_n$ and $ \mathbf F_n$.  
 See Figure~\ref{f:corollary} for graphs of the two functions introduced below. 

%%%%%%%%%%%%%%%%%%%%%%%%%%%%%% COROLLARY COROLLARY COROLLARY
\begin{corollary}\label{c:wtd}  For the lacunary and full spherical maximal function, we have these two sets of weighted inequalities. 

%%  ENUMERATE
\begin{enumerate}
\item  Define $ \frac 1 {\phi _{\textup{lac}} (1/r)}$ to be a piecewise linear function on $ [0,1]$ whose graph connects the points  $ Q_1 = (0,1)$, $ Q_2 = (\frac  {n-1}n, \frac {n-1}n)$, and $ Q_3 = (1,0)$.  That is, 
\begin{equation*}
\frac 1 {\phi _{\textup{lac}} (1/r)} = 
\begin{cases}
1- \frac 1 {rn}   &  0< \frac 1r \leq \frac n{n+1}, 
\\
n (1- \tfrac 1{r})   &   \frac n{n+1} < \frac 1r < 1 . 
\end{cases}
\end{equation*}
Assuming $ 1 < r < p < \phi (r)'$, we have 
\begin{gather} \label{e:LL}
{A _{p/r}} \cap {RH _{ (\phi  _{\textup{lac}}(r)'/p)'}}   
\subset \mathcal{L}_p . 
\end{gather}

\item   Define $ \frac 1 {\phi _{\textup{full}} (1/r)}$ to be the piecewise linear function on $ [0, \frac {n-1}n]$ whose graph connects the points  $ P_1 =(0,1)$, 
$ P_4 = (\frac {n ^2 -n} {n ^2 +1}, \frac {n ^2 -n+2} {n ^2 +1})$
and $P_3 = (\frac {n-1} {n},  \frac {n-1}n)$.
Assuming $ \frac {n} {n-1} < r < p < \phi _{\textup{full}} (r)'$, we have 
\begin{equation}  \label{e:FULL}
{A _{p/r}} \cap {RH _{ (\phi  _{\textup{full}}(r)'/p)'}}   
\subset \mathcal{F}_p  . 
\end{equation}

\end{enumerate}
%% ENUMERATE
\end{corollary}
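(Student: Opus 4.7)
The plan is to combine the sparse bounds established in Theorems~\ref{t:lac} and~\ref{t:full} with the general weighted theory for sparse forms, packaged here as Theorem~\ref{t:bfp}. That reference tells us: if an operator $T$ satisfies $\lVert T : (r,s)_m\rVert < \infty$, then $T$ is bounded on $L^p(w)$ for all $r < p < s'$ and all $w \in A_{p/r} \cap RH_{(s'/p)'}$. So the corollary reduces to \emph{choosing the right $(r,s)$ on the boundary of $\mathbf{L}_n$ (resp.\ $\mathbf{F}_n$)} and verifying that the stated weight class matches the one delivered by Theorem~\ref{t:bfp}.

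For part (1), I would proceed as follows. Fix $r$ and $p$ with $1 < r < p < \phi_{\textup{lac}}(r)'$. A direct inspection of the definition shows that the curve $1/r \mapsto 1/\phi_{\textup{lac}}(1/r)$ parametrizes precisely the upper boundary of the triangle $\mathbf{L}_n$: the segment $Q_1Q_2$ where $1/r \le n/(n+1)$, then the segment $Q_2Q_3$ where $1/r \ge n/(n+1)$. Setting $s = \phi_{\textup{lac}}(r)$ therefore places $(1/r, 1/s)$ on the boundary of $\mathbf{L}_n$, just outside the open region where Theorem~\ref{t:lac} applies. The fix is standard openness: the classes $A_{p/r}$ and $RH_\rho$ are open in their defining parameters, so any $w \in A_{p/r} \cap RH_{(\phi_{\textup{lac}}(r)'/p)'}$ actually lies in $A_{p/\tilde{r}} \cap RH_{(\tilde{s}'/p)'}$ for some nearby pair $(\tilde r, \tilde s)$ with $(1/\tilde r, 1/\tilde s)$ in the \emph{interior} of $\mathbf{L}_n$ and $\tilde r < p < \tilde s'$. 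Apply Theorem~\ref{t:lac} at this perturbed pair, then apply Theorem~\ref{t:bfp} to conclude $M_{\textup{lac}} : L^p(w) \to L^p(w)$.

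Part (2) follows the identical template, replacing $\mathbf{L}_n$ by $\mathbf{F}_n$, $\phi_{\textup{lac}}$ by $\phi_{\textup{full}}$, and Theorem~\ref{t:lac} by Theorem~\ref{t:full}. The hypothesis $r > n/(n-1)$ appears naturally because $\mathbf{F}_n$ only extends to $1/r \le (n-1)/n$, which forces $r > n/(n-1)$ to have any sparse bound at all to exploit. The parametrization $1/r \mapsto 1/\phi_{\textup{full}}(1/r)$ is piecewise linear through $P_1, P_4, P_3$ and traces the relevant portion of the boundary of $\mathbf{F}_n$, so the same openness argument applies verbatim.

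The step requiring the most care is simply the bookkeeping: verifying that the linked pair $(r, \phi(r))$ actually lies on the boundary segments as claimed, and that the conjugate index arithmetic $(\phi(r)'/p)'$ in Theorem~\ref{t:bfp} produces the stated reverse-H\"older exponent. This is a direct calculation from the piecewise-linear definitions of $\phi_{\textup{lac}}$ and $\phi_{\textup{full}}$; there is no analytical obstacle, and once this identification is made the corollary drops out. One could, if desired, give quantitative dependence on $[w]_{A_{p/r}}$ and $[w]_{RH_\rho}$ using the factorization Proposition~\ref{p:factor} together with the sharp constant version of Theorem~\ref{t:bfp}, but as the statement is only qualitative the openness shortcut above suffices.
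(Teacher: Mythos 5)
Your proposal is correct and matches the paper's own (very brief) argument: the paper likewise obtains Corollary~\ref{c:wtd} by combining the sparse bounds of Theorems~\ref{t:lac} and~\ref{t:full} with Theorem~\ref{t:bfp}, remarking that since the $A_p$ and $RH_r$ conditions are open one may work on the boundary curves $s=\phi_{\textup{lac}}(r)$ and $s=\phi_{\textup{full}}(r)$ of $\mathbf{L}_n$ and $\mathbf{F}_n$. Your write-up simply makes explicit the self-improvement/perturbation step that the paper leaves implicit.
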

%%%%%%%%%%%%%%%%%%%%%%%%%%%%%%  COROLLARY COROLLARY COROLLARY

%%%%%%%%%%%%%%% Figure
\begin{figure}
\begin{tikzpicture}

\begin{scope}[scale=1.5]
\draw[->,thick] (-.5,0) -- (2.5,0) node[below] {$ \frac1r$} ;
\draw[->,thick] (0,-.5) -- (0,2.75) node[left] {$ \frac1s$}; 
\draw[thick] (0,2)   --  (1.7,1.7) -- (2,0)  node[midway, right] {$ 1/\phi _{\textup{lac}}$}  ; 
\filldraw  (1.7,1.7) circle (.1em); 
\draw (2,1.8) node[above] { $  (\frac n {n+1},\frac n {n+1})$}; 
\draw (2,0) node[below] {1}; \draw (0,2) node[left] {1}; 

\begin{scope}[xshift=4.5cm] 
\draw[->,thick] (-.5,0) -- (2.5,0) node[below] {$ \frac1r$} ;
\draw[->,thick] (0,-.5) -- (0,2.5) node[left] {$ \frac1s$}; 
\draw[thick] (0,2)  --  (1.4,1.8) -- (1.9, 1.5 );  \filldraw  (1.4,1.8) circle (.1em); \filldraw  (1.9, 1.5) circle (.1em); 
\draw (0,2)  node[left] {$ P_1$}; 
\draw (2.4 ,1.8)  node[above] {$  { P_4= (\frac {n ^2 -n} {n ^2 +1}, \frac {n ^2 -n+2} {n ^2 +1})} $}; 
\draw  (1.9, 1.5 )  node[right] { $ P_3= (\frac {n-1} {n},  \frac {n-1}n)$};  \draw (1,1) node {$ 1/\phi _{\textup{full}}$};
\draw[dashed] (0,2) -- (1.9, 1.5 );
\end{scope}

\end{scope}

\end{tikzpicture}

\caption{The two functions $  1/\phi _{\textup{lac}}$ and $ 1/\phi _{\textup{full}} $ of Corollary~\ref{c:wtd}. 
The dashed line is the function $ 1/ \psi $, the function in \eqref{e:zc}.}
\label{f:corollary}
\end{figure}
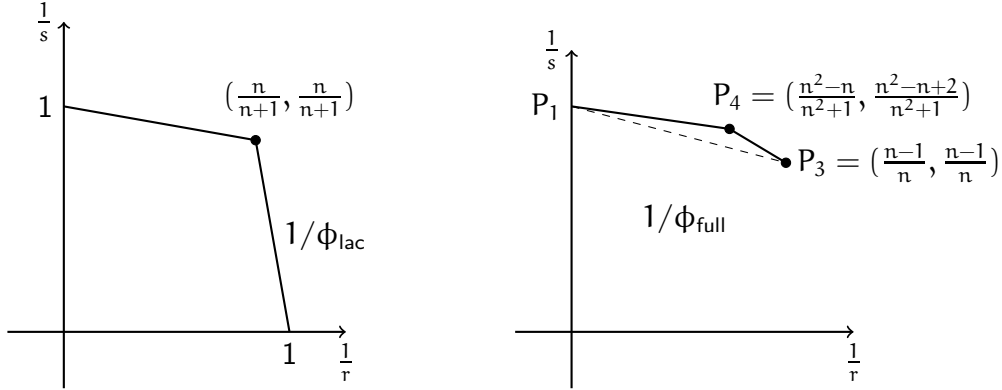
%%%%%%%%%%%%%%% Figure

The case of radial weights has been completely analyzed by Duoandikoetxea and Vega \cite{MR1373065}.  Here, we recall this result, which records the possible inequalities for radial weights. These are sharp, except possibly the $ a = 1-n$ endpoint case in \eqref{e:DV2}.  (In particular, this shows that the class $\mathcal{L}_p$ 
does not satisfy the classical duality $\mathcal{L}_{p'}= \mathcal{L}_p^{1-p'}$. See \cite{MR1373065} for more details.) 
%%%%%%%%%%%%%%%%%%%%%%%%%%%%%% THEOREM THEOREM THEOREM
\begin{priorResults}\label{t:}\cite{MR1373065} 
Let $ w _{a} (x)= \lvert  x\rvert ^{a} $ be a radial weight on $ \mathbb R ^{n}$, for $ a \in \mathbb R $. 
We have the inequalities below, for $ 1< p < \infty $.  
\begin{gather} \label{e:DV1}
 w_a \in \mathcal{L}_p,  \qquad  1-n \leq a < (n-1) (p-1),  
\\ \label{e:DV2}
 w_a \in \mathcal{F}_p,   \qquad   1-n < a < (n-1) (p-1) -n. 
\end{gather}
In \eqref{e:DV2}, the restriction on $ a$ implies that $  \tfrac {n} {n-1} < p < \infty $. 
\end{priorResults}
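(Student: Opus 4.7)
The plan is to derive both weighted inclusions from Corollary \ref{c:wtd}, which converts the sparse bounds of Theorems \ref{t:lac} and \ref{t:full} into $L^p(w)$-boundedness for any weight $w \in A_{p/r} \cap RH_{(\phi(r)'/p)'}$ with $1 < r < p < \phi(r)'$. It therefore suffices to verify that for every admissible $a$, the radial power weight $w_a(x) = \lvert x\rvert^a$ lies in such an intersection for some choice of $r$.

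The first ingredient is the standard characterization, obtained by direct computation of $\langle w_a \rangle_Q$ and $\langle w_a^s \rangle_Q$ (split according to whether $Q$ contains the origin):
\[
w_a \in A_q \iff -n < a < n(q-1), \qquad w_a \in RH_s \iff a > -n/s.
\]
Setting $q = p/r$ and $s = (\phi(r)'/p)'$, membership of $w_a$ in the required intersection becomes the pair of linear conditions
\[
-n < a < n\bigl(p/r - 1\bigr) \quad\text{and}\quad a > -n + np/\phi(r)'.
\]

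For the lacunary inclusion \eqref{e:DV1}, I would substitute the explicit piecewise-linear formula for $\phi_{\textup{lac}}$ and trace $r$ along $(1,p)$ so that $(1/r, 1/\phi_{\textup{lac}}(r))$ runs along the upper boundary of $\mathbf{L}_n$; the union of the resulting open intervals of admissible $a$ is exactly $(1-n, (n-1)(p-1))$, with the upper endpoint controlled by the segment of $1/\phi_{\textup{lac}}$ from $Q_2$ to $Q_3$ and the lower endpoint by the segment from $Q_1$ to $Q_2$. An even quicker proof of this open range is available by invoking the pointwise domination $M_{\textup{lac}} \leq M$, since $[1-n, (n-1)(p-1))$ sits inside the Muckenhoupt window $(-n, n(p-1))$ and so $w_a \in A_p \subset \mathcal{L}_p$. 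The full inclusion \eqref{e:DV2} is handled by the same sparse recipe with $\phi_{\textup{full}}$ and $\mathbf{F}_n$ in place of $\phi_{\textup{lac}}$ and $\mathbf{L}_n$; the tighter upper bound $(n-1)(p-1)-n$ and the implicit restriction $p > n/(n-1)$ follow from the shape of the trapezium $\mathbf{F}_n$, which excludes both a neighbourhood of $(1,0)$ and the edge $1/s = 1$.

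The main obstacle is the closed endpoint $a = 1 - n$ in \eqref{e:DV1}, since Corollary \ref{c:wtd} yields only open conditions. I would handle it by the elementary annular pointwise bound $M_{\textup{lac}} \mathbf{1}_B(x) \lesssim \sum_{k \geq 0} 2^{-k(n-1)} \mathbf{1}_{\{\lvert\lvert x\rvert - 2^k\rvert \leq 2\}}$ displayed at the start of \S\ref{s:weighted}: the weight $w_{1-n}$ assigns comparable mass to each dyadic annulus $\{\lvert x\rvert \sim 2^k\}$, so the factor $2^{-k(n-1)}$ supplies summable geometric decay against the dual weight $\sigma = w_{1-n}^{1-p'}$, yielding the required $L^p(w_{1-n})$ bound by a Schur-test argument. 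The strict lower bound $a > 1 - n$ in \eqref{e:DV2} is consistent with the open character of the sparse region and requires no additional work.
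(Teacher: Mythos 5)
You should first note what the paper actually does with this statement: nothing is proved. It is Duoandikoetxea and Vega's theorem, recalled from \cite{MR1373065} as a prior result, and immediately afterwards the paper states explicitly ``We cannot recover the full strength of this theorem'' from the sparse bounds. Your plan to derive \eqref{e:DV1} and \eqref{e:DV2} from Corollary~\ref{c:wtd} therefore attempts precisely what the paper says its machinery cannot do, and the shortfall is quantifiable within your own framework. In the lacunary case, the constraint $p<\phi_{\textup{lac}}(r)'$ caps the usable exponents: for $p\geq n+1$ one is confined to the Littman--Strichartz edge with $1/r<n/p$, and the Muckenhoupt condition $-n<a<n(p/r-1)$ then yields at best $a<n(n-1)$, which is strictly smaller than $(n-1)(p-1)$ once $p>n+1$; the union of intervals you claim equals $(1-n,(n-1)(p-1))$ is in fact $(1-n,\,n(n-1))$ for such $p$. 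The full case is worse: take $n=3$ and $p=8/5$, where \eqref{e:DV2} asserts $w_a\in\mathcal F_p$ for $a\in(-2,-\tfrac95)$. The constraint $r<p$ forces $1/r>5/8$, hence $(1/r,1/\phi_{\textup{full}}(1/r))$ lies on the segment $P_4P_3$ where $1/\phi_{\textup{full}}(r)'=2/r-1$, and the reverse H\"older condition in \eqref{e:FULL} gives $a>-3+\tfrac{24}{5}(\tfrac 2r-1)>-\tfrac95$. So the sparse route produces only $a>-\tfrac95$ and misses the \emph{entire} interval claimed by the theorem at this $p$.

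Two of your auxiliary arguments are also incorrect. The pointwise bound $M_{\textup{lac}}\leq M$ is false: spherical averages are not dominated by solid averages (for $f=\mathbf 1_{\{|\,|x|-1|<\delta\}}$ one has $A_1f(0)=1$ while $Mf(0)\simeq\delta$), and its consequence $A_p\subset\mathcal L_p$ would contradict the sharpness of \eqref{e:DV1}: the weights $w_a$ with $(n-1)(p-1)\leq a<n(p-1)$ belong to $A_p$ but not to $\mathcal L_p$. Indeed, the opening of \S\ref{s:weighted} makes exactly this point, that Muckenhoupt weights are not the right class for these operators. Finally, the closed endpoint $a=1-n$ in \eqref{e:DV1} cannot be dispatched by a Schur test against the displayed annular bound, which concerns only $M_{\textup{lac}}\mathbf 1_{B}$ for a single unit ball and gives no control of general $f$; that endpoint is the genuinely delicate part of \cite{MR1373065} (note the paper flags the analogous endpoint for \eqref{e:DV2} as possibly not sharp). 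The correct disposition of this statement is a citation: its proof in \cite{MR1373065} proceeds by Fourier-analytic estimates on dyadic annuli, not by sparse domination, and the surrounding text of the paper exists precisely to compare what sparse bounds \emph{can} recover (Corollary~\ref{c:wtd}, Proposition~\ref{p:n}) against this stronger radial-weight result.
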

%%%%%%%%%%%%%%%%%%%%%%%%%%%%%% THEOREM THEOREM THEOREM

We cannot recover the full strength of this theorem.  But this is to be expected: the category of $A_p$ weights is not the correct one to characterize the weights for the spherical maximal function, and our sparse results are sharp.  This suggests that the sparse bounds are proving the sharpest possible results in the category of Muckenhoupt type weights.  
We can 
improve upon  result below of Cowling, Garcia-Cuerva and Gunawan \cite{MR1922609}. 
It gives sufficient conditions for $ M _{\textup{full}}$ to satisfy a weighted inequality in terms of a factorization of 
the weight.  

%%%%%%%%%%%%%%%%%%%%%%%%%%%%%% THEOREM THEOREM THEOREM
\begin{priorResults}\label{t:CD} \cite{MR1922609}*{Thm 3.1}  
Let $ \frac n {n-1} < p < \infty $, and $ \max \{0, 1-\frac pn\} \leq \delta < \frac {n-2} {n-1}$. 
Then  $A_1^\delta A_1 ^{\delta (d-1)- (d-2)} \subset \mathcal{F}_p$. 
\end{priorResults}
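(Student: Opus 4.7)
The plan is to derive Theorem \ref{t:CD} as an immediate consequence of the sparse bound for $M_{\textup{full}}$ (Theorem \ref{t:full}) combined with the factorization identity of Proposition \ref{p:factor}. By the latter, a weight in $A_{p/r}\cap RH_\rho$ is exactly a product $u_1^{1/\rho}u_2^{1-p/r}$ with $u_1,u_2\in A_1$; and by the former (applied through Corollary \ref{c:wtd}.2), any such weight lies in $\mathcal{F}_p$ provided $(1/r,1/s)$ can be placed in the interior of $\mathbf{F}_n$ with $\rho$ bounded by the companion reverse-H\"older index $(\phi_{\textup{full}}(r)'/p)'$. The whole proof then reduces to choosing indices.

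Matching exponents: solving $1/\rho = \delta$ and $1-p/r = \delta(n-1)-(n-2)$, reading the $d$ of the statement as the dimension $n$, forces $\rho = 1/\delta$ and $r = p/[(n-1)(1-\delta)]$. A short algebraic check then shows that the hypothesis $\delta \geq 1-p/n$ is equivalent to $r > n/(n-1)$, while $\delta < (n-2)/(n-1)$ is equivalent to $r < p$. In particular, the admissible range for $r$ is nonempty exactly when $p > n/(n-1)$, matching the hypothesis on $p$, and the vertical edge of $\mathbf{F}_n$ through $P_2,P_3$ is already respected.

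The main obstacle is the final check: that $s$ can be chosen so that $(1/r,1/s)$ lies in the interior of $\mathbf{F}_n$ and the resulting reverse-H\"older exponent $(\phi_{\textup{full}}(r)'/p)'$ is at least $1/\delta$. Since $1/\phi_{\textup{full}}$ is piecewise linear with a kink at $P_4$, the verification splits according to whether $1/r \leq (n^2-n)/(n^2+1)$ or $1/r \geq (n^2-n)/(n^2+1)$, and in each case reduces to an elementary linear inequality in $\delta$ and $p$. The hypotheses of Theorem \ref{t:CD} are designed precisely so that these inequalities hold with strict slack, and the openness of the $A_{p/r}$ and $RH_\rho$ classes then lets one absorb the slack to conclude $w\in\mathcal{F}_p$. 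Moreover, because each linear piece of $1/\phi_{\textup{full}}$ is sharper than the trade-off available to CGG, this argument should in fact yield a strictly larger region of admissible $\delta$, in line with the author's earlier claim that the sparse framework improves upon Theorem \ref{t:CD}.
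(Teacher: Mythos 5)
Your architecture is the paper's: combine the sparse bound for $M_{\textup{full}}$ with the weighted estimates for $(r,s)$-sparse forms (Theorem \ref{t:bfp}, packaged as Corollary \ref{c:wtd}.2), factor the weight class via Proposition \ref{p:factor}, and solve the exponent-matching equations $1/\rho=\delta$ and $1-p/r=\delta(n-1)-(n-2)$, so $r=p/[(n-1)(1-\delta)]$ (and yes, the $d$ in the statement is the dimension $n$). Your translations of the hypotheses, $\delta\geq 1-\frac pn \Leftrightarrow r\geq \frac n{n-1}$ and $\delta<\frac{n-2}{n-1}\Leftrightarrow r<p$, agree with the paper's computation.

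However, the one step you defer to the end is stated backwards, and as written it would fail. You require that the reverse-H\"older exponent $(\phi_{\textup{full}}(r)'/p)'$ be \emph{at least} $1/\delta$. The needed direction is the opposite: reverse H\"older classes strengthen as the index grows, $RH_{\rho_1}\subset RH_{\rho_2}$ for $\rho_2\leq \rho_1$; equivalently, in the factorization $A_1^{1/\rho^*}A_1^{1-p/r}$ a weight $u^{\delta}$ with $u\in A_1$ can be written as $v^{1/\rho^*}$ with $v=u^{\delta\rho^*}\in A_1$ exactly when $\delta\rho^*\leq 1$. So you must verify $(\phi_{\textup{full}}(r)'/p)'\leq 1/\delta$, and your proposed two-case linear inequalities, pursued in the direction you state, would come out false in the interior of $\mathbf F_n$. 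Once corrected, your plan does work, and the paper shows it works with no case analysis at the kink $P_4$ at all: rather than the full piecewise-linear $\phi_{\textup{full}}$, the paper uses the chord $\psi(r)^{-1}=1-\frac{1}{r(n-1)}$ from $P_1$ to $P_3$ (the dashed line in Figure \ref{f:corollary}), which lies in $\mathbf F_n$, so the $(r,\psi(r))$ sparse bound is available. Along the chord, $s'=\psi(r)'=r(n-1)$ and the reciprocal of $(s'/p)'$ is exactly $1-\frac{p}{r(n-1)}=\delta$, so the matching is an identity and the CGG class is recovered on the nose. Since the graph of $1/\phi_{\textup{full}}$ lies on or above the chord, $\phi_{\textup{full}}(r)'\geq r(n-1)$ and hence $(\phi_{\textup{full}}(r)'/p)'\leq 1/\delta$, strictly in the interior; this is the correct form of your ``strict slack,'' and it is what substantiates your closing remark that the sparse bound gives strictly more than Theorem \ref{t:CD} (compare Proposition \ref{p:n}, which exploits the vertex $P_4$).
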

%%%%%%%%%%%%%%%%%%%%%%%%%%%%%% THEOREM THEOREM THEOREM

We will deduce this as a special case of \eqref{e:FULL}. 

%%%%%%%%%%%%%%%%%%%%%%%%%%%%%% PROOF PROOF PROOF
\begin{proof}[Proof of Theorem~\ref{t:CD}]  Rather than use the exact form of $ \phi _{\textup{full}}$ in \eqref{e:FULL}, 
we use the restricted form 
\begin{equation} \label{e:zc}
\psi (r) ^{-1} = 1 - \tfrac 1 {r (n-1)}  , \qquad  \tfrac n {n-1} < r < \infty .  
\end{equation}
It follows that we have a sparse form bound $ (r, \psi (r))$. 
This function corresponds to the dashed line in Figure \ref{f:corollary}.
Provided $ r < p < \psi (r)' = (r (n-1))' =: s'$, we have a weighted inequality, for $ w \in A _{p/r} \cap RH  _{ (s'/p)'}$.  
Now, $ (s'/p)' = \frac {r (n-1)} {r (n-1) -p} = 1 - \frac p {r (n-1)}$. By Proposition~\ref{p:factor}, we have 
$
A_1 ^{ 1 - \frac p {r (n-1)}} A_1 ^{1- \frac pr} \subset \mathcal{F}_p
$.
Setting $ \delta = 1 - \frac p {r (n-1)}$, we have $ 1-\frac pr = \delta (n-1) - (n-2)$.  This matches the conclusion of the Theorem, so the proof is complete. 
\end{proof}
%%%%%%%%%%%%%%%%%%%%%%%%%%%%%% PROOF PROOF PROOF

As the proof above indicates, stronger results than those of Theorem \ref{t:CD} hold.  The authors of \cite{MR1922609} raised the possibility 
that $  A _1^{1 - \frac 1n} \subset \mathcal{F}_p$. Here, we show that this is indeed the case, provided $ p$ is sufficiently large.  It will be clear that more is true, but we do not pursue the details here. 

%%%%%%%%%%%%%%%%%%%%%%%%%%%%%% PROPOSITION PROPOSITION PROPOSITION
\begin{proposition}\label{p:n} For $ n\geq 2$,  we have 
$A_1^{1-\frac1n} \subset \mathcal{F}_p$, for 
$\tfrac {n ^2 +1}  {n ^2 -n }< p  < \infty $.
\end{proposition}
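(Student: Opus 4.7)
The approach is a refinement of the proof of Theorem~\ref{t:CD}, replacing the interior line $\psi$ by the sharper edge $P_1 P_4$ of the trapezium $\mathbf{F}_n$. The proof has three moves: evaluate $\phi_{\textup{full}}$ on this edge, apply Corollary~\ref{c:wtd}(2), and rewrite the resulting weight class via the factorization of Proposition~\ref{p:factor}.

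On the edge $P_1 P_4$ joining $P_1 = (0,1)$ to $P_4 = (\tfrac{n^2-n}{n^2+1},\tfrac{n^2-n+2}{n^2+1})$, a direct calculation shows the equation of the line is $1/s = 1 - (1/r)/n$, equivalently $\phi_{\textup{full}}(r)' = rn$ for $r \geq (n^2+1)/(n^2-n)$. Hence by Corollary~\ref{c:wtd}(2), applied to interior points of $\mathbf{F}_n$ arbitrarily close to this edge, we obtain
\begin{equation*}
A_{p/r}\cap RH_{(rn/p)'}\subset \mathcal{F}_p \qquad \text{for every } r \geq \tfrac{n^2+1}{n^2-n},\ r<p<rn.
\end{equation*}
Next, Proposition~\ref{p:factor} with $\rho = (rn/p)'$ and $\delta := 1/\rho = 1 - p/(rn)$ identifies this set with the factorization class $A_1^{1/\rho}A_1^{1-p/r}$. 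From $p = rn(1-\delta)$ one gets $1 - p/r = n\delta - (n-1)$, so the inclusion reads
\begin{equation*}
A_1^\delta \, A_1^{n\delta-(n-1)} \subset \mathcal{F}_p,\qquad \delta \in (0,\tfrac{n-1}{n}).
\end{equation*}

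For each fixed $p > (n^2+1)/(n^2-n)$, letting $r$ approach $p$ from below (while keeping $r \geq (n^2+1)/(n^2-n)$) drives $\delta \nearrow (n-1)/n$ and the second exponent $n\delta - (n-1) \nearrow 0$, so that the second $A_1$-factor becomes trivial and the product class formally collapses to $A_1^{1-1/n}$, the class targeted by the proposition.

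The main obstacle is making this limit rigorous. A weight $w = u^{(n-1)/n}$ with $u \in A_1$ does not admit the naive factorization $w = v_1^\delta v_2^{n\delta-(n-1)}$ with $v_1,v_2$ positive powers of $u$ when $\delta < (n-1)/n$; indeed, the sharp reverse H\"older exponent of $A_1^{1-1/n}$ is exactly $n/(n-1)$, whereas $(rn/p)' > n/(n-1)$ strictly for $r<p$. The conclusion must therefore be obtained by a closure argument: using the openness of the sparse bound in the interior of $\mathbf{F}_n$ to control the weighted constants uniformly as the parameters $(1/r,1/s)$ approach the edge $P_1 P_4$ and $p\to r$, and then passing to the limit $\delta = (n-1)/n$. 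This delicate uniform-constant/limit step is the crux of the proof.
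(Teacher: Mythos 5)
Your computational skeleton is right and is essentially the paper's: the paper also runs the argument of Theorem~\ref{t:CD}, replacing the line $\psi$ by the sparse bound at $P_4$ (where $s_0'/r_0=n$), which is the single-point version of your edge $P_1P_4$ calculation, and your arithmetic $\phi_{\textup{full}}(r)'=nr$, $\delta=1-\tfrac{p}{nr}$, $1-\tfrac pr=n\delta-(n-1)$ is correct. The genuine gap is your final paragraph. You correctly note that a fixed $w=u^{1-\frac1n}$, $u\in A_1$, lies in \emph{none} of the classes $A_1^{\delta}A_1^{n\delta-(n-1)}$ with $\delta<\tfrac{n-1}{n}$ --- but precisely for that reason your proposed ``closure argument'' cannot work: the inclusions $A_1^{\delta}A_1^{n\delta-(n-1)}\subset\mathcal F_p$ never apply to $w$, so there is no sequence of inequalities for $M_{\textup{full}}$ on $L^p(w)$ in which to ``control constants uniformly'' and ``pass to the limit $\delta=\tfrac{n-1}n$.'' Worse, uniformity over the whole class $A_1^{1-\frac1n}$ is false, since the self-improvement exponent of the reverse H\"older condition degenerates along that class (your own power-weight example $u=\lvert x\rvert^{-(n-\epsilon)}$ shows this). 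So the crux step is not merely ``delicate'': as described, it would fail.

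The correct closing move --- which the paper invokes through its standing remark that the $A_p$ and $RH_r$ conditions are \emph{open} --- is self-improvement of the individual weight, not uniformity in the parameters. Fix $p>\tfrac{n^2+1}{n^2-n}$ and $u\in A_1$. By the reverse H\"older property of $A_1$ weights, $u^{1+\epsilon}\in A_1$ for some $\epsilon=\epsilon(u)>0$; equivalently, by Gehring, $w=u^{1-\frac1n}\in RH_{\rho}$ for some $\rho>\tfrac{n}{n-1}$ depending on $w$. Now choose a single $r\in\bigl(\tfrac{n^2+1}{n^2-n},p\bigr)$ so close to $p$, and an interior point $(\tfrac1r,\tfrac1s)$ of $\mathbf F_n$ so close to the edge $P_1P_4$, that $(s'/p)'\leq\rho$; since $w\in A_1\subset A_{p/r}$ and $w\in RH_{\rho}\subset RH_{(s'/p)'}$, one application of Theorem~\ref{t:bfp} (that is, of \eqref{e:FULL}) gives $M_{\textup{full}}:L^p(w)\to L^p(w)$. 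No limit is taken: the endpoint class is reached because each of its members individually lies in a strictly better $RH$ class. (Your parametrization by the whole edge, rather than the single vertex $P_4$, is in fact needed to cover all $p>\tfrac{n^2+1}{n^2-n}$, since the vertex alone only permits $p<s_0'$; with the substitution above your argument is complete and agrees with the paper's.)
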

%%%%%%%%%%%%%%%%%%%%%%%%%%%%%% PROPOSITION PROPOSITION PROPOSITION

%%%%%%%%%%%%%%%%%%%%%%%%%%%%%% PROOF PROOF PROOF
\begin{proof}
We use the proof strategy for Theorem~\ref{t:CD}, but use the sparse bound provided to us by 
the point $ P_4  = (\frac {n ^2 -n} {n ^2 +1}, \frac {n ^2 -n+2} {n ^2 +1})$. 

Indeed, assuming a sparse bound of the form $ (r_0, s_0)$, we have the inequality 
\begin{equation} \label{e:ZR}
\lVert M _{\textup{full}} :  L ^{p} (w) \mapsto L ^{p} (w)\rVert < \infty , \qquad   w = u ^{1/ \rho }, \ u\in A_1, 
\end{equation}
provided  $ r_0 < p < s_0'$, and  $ \rho = (s_0'/p)'$. 

Setting $ (1/r_0,1/s_0) = P_4$, we have 
\begin{align*}
\frac 1 {s_0} = \frac {n ^2 -n+2} {n ^2 +1},  && \frac 1 {s_0'} = \frac {n-1} {n ^2 +1}, 
\\
\frac 1 {r_0}= \frac {n ^2 -n} {n ^2 +1} ,  && \frac {s_0'} {r_0} = n. 
\end{align*}
It follows that $ \rho =  (s_0'/p)' = \frac n {n-1}$. For $ p> r_0$, we are allowed to take $ w = u ^{\frac 1 {\rho }}= u^{1 - \frac 1n}$, as claimed, provided $ p> r_0$.  
\end{proof}
%%%%%%%%%%%%%%%%%%%%%%%%%%%%%% PROOF PROOF PROOF

%%%%%%%%%%%%%%%%%%%%%%%%%%%%%% SECTION  SECTION SECTION
%%%%%%%%%%%%%%%%%%%%%%%%%%%%%% SECTION  SECTION SECTION 
\section{Further Remarks} %\label{s:}

%%%%%%%%%%%%%%%%%%%%%%%%%%%%%% SUBSECTION SUBSECTION SUBSECTION SUBSECTION
 %%%%%%%%%%%%%%%%%%%%%%%%%%%%%% SUBSECTION SUBSECTION SUBSECTION SUBSECTION 
\subsection{Endpoint Issues}%\label{ss.}
Richard Oberlin  \cite{170404297} has investigated the endpoint issues. Namely, for a class of Radon transforms, a sparse bound is proved at the boundary of the sparse region.  The `local $L^r$ norm' is 
adjusted with a logarithmic factor.  It would be interesting to further develop the endpoint estimates.

%%%%%%%%%%%%%%%%%%%%%%%%%%%%%% SUBSUBSECTION SUBSUBSECTION SUBSUBSECTION 
\subsection{Weighted Estimates for m-sparse forms}\label{s:msparse}

 For $ 1< p < \infty $, the dual space to $ L ^{p} (w)$ (with respect to Lebesgue measure) is $ L ^{p'} (\sigma )$, where $ p' = \frac p {p-1}$ and $ \sigma = w ^{1-p'}$.  This is referenced in the statement of the Theorem below, which gives weighted inequalities for sparse forms. 
 These estimates are sharp in the Muckenhoupt and reverse H\"older indices.  

%%%%%%%%%%%%%%%%%%%%%%%%%%%%%% THEOREM THEOREM THEOREM
\begin{priorResults}\label{t:bfp}\cite{MR3531367}*{\S6} 
Let $ 1\leq r < s' < \infty $. Then, 
\begin{gather} \label{e:BBP}
\Lambda _{r,s} (f,g)  
\leq \bigl\{ [w] _{A _{p/r}} \cdot  [w] _{RH _{ (s'/p)'}}\bigr\} ^{\alpha }  \lVert f\rVert _{L ^{p} (w)} \lVert g \rVert _{L ^{p'} (\sigma )},  \qquad  r < p < s', 
\\ \label{e:alpha}
\textup{where} \quad 
\alpha = \max \Bigl\{ \frac 1 {p-1}, \frac {s'-1} {s' -p} \Bigr\} . 
\end{gather}

\end{priorResults}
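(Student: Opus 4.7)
The plan is to follow the Bernicot--Frey--Petermichl strategy, combining the sparse structure with weighted H\"older inequalities and Buckley-type sharp bounds for weighted Hardy--Littlewood maximal operators.

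First, I would exploit the sparse structure: since each $Q\in\mathcal{S}$ carries a major set $E_Q$ with $\lvert Q\rvert \leq 2\lvert E_Q\rvert$ and the $E_Q$'s are pairwise disjoint, the sum defining $\Lambda_{r,s}(f,g)$ may be replaced by an integral against a union of disjoint sets. I would then rewrite everything in terms of the dualized functions $F = f w^{1/p}$ (so $\lVert F\rVert_p = \lVert f\rVert_{L^p(w)}$) and $G = g\sigma^{1/p'}$, and apply H\"older inside each local average:
\begin{equation*}
\langle f \rangle_{Q,r} \leq \langle \lvert F\rvert^p\rangle_Q^{1/p} \langle w^{-r/(p-r)}\rangle_Q^{(p-r)/(pr)},
\end{equation*}
and analogously for $\langle g\rangle_{Q,s}$ with weight $\sigma$. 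Both H\"older splits are legitimate because the hypothesis $r < p < s'$ forces $p/r > 1$ and $p'/s > 1$.

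Second, the weight factors $\langle w^{-r/(p-r)}\rangle_Q$ and $\langle \sigma^{-s/(p'-s)}\rangle_Q$ must be absorbed into the $A_{p/r}$ and $RH_{(s'/p)'}$ characteristics. The identity $w^{-r/(p-r)} = \sigma^{r(p-1)/(p-r)}$ shows that the first of these is exactly controlled by the $A_{p/r}$ condition, written through the dual weight; the second corresponds to a power whose control requires, in addition, the reverse H\"older bound, and so produces a factor of $[w]_{RH_{(s'/p)'}}$. At this stage the problem has been reduced to pairing $F$ and $G$ against a product of two weighted dyadic maximal-type functions adapted to $w$ and $\sigma$.

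Third, I would invoke H\"older's inequality in $L^p(w)\times L^{p'}(\sigma)$ together with Buckley's sharp bound: the $L^p(w)\to L^p(w)$ norm of a weighted dyadic maximal operator is controlled by $[w]_{A_{p/r}}^{1/(p-1)}$, while the dual estimate on the $g$ side produces the exponent $(s'-1)/(s'-p)$. Combining and collecting powers of the Muckenhoupt and reverse H\"older constants yields the claimed $\alpha = \max\{1/(p-1),\ (s'-1)/(s'-p)\}$. The main obstacle is the careful bookkeeping of exponents: one must distribute the powers of $w$ between the $A_{p/r}$ and $RH_{(s'/p)'}$ characteristics so that both halves of the eventual H\"older pairing are controlled by the \emph{same} $\alpha$, which is exactly why a maximum of two exponents appears in the statement and why the proof is not simply an interpolation between two symmetric estimates.
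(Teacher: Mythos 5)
First, a point of comparison: the paper contains no proof of this statement — it is quoted from Bernicot--Frey--Petermichl \cite{MR3531367}*{\S 6} (with \cite{MR3591468} supplying allied sharp two-weight bounds) — so your proposal must be measured against their argument. Your first two steps are correct and coincide with the standard opening of that argument: the H\"older splits $\langle f\rangle_{Q,r}\le \langle \lvert F\rvert^p\rangle_Q^{1/p}\langle w^{-r/(p-r)}\rangle_Q^{(p-r)/(pr)}$ and the analogue for $g$ are legitimate under $r<p<s'$, and the identity $w^{-r/(p-r)}=\sigma^{r(p-1)/(p-r)}$ is right.

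The genuine gap is in your third step, in two respects. First, the Buckley bound is misquoted: for the maximal operator with $r$-th powers the sharp estimate is $\lVert M_r\rVert_{L^p(w)\to L^p(w)}\lesssim [w]_{A_{p/r}}^{1/(p-r)}$ (apply Buckley to $M$ at exponent $p/r$ and rescale), which equals $[w]_{A_{p/r}}^{1/(p-1)}$ only when $r=1$. Second, and more seriously, the scheme ``H\"older in $L^p(w)\times L^{p'}(\sigma)$ against $M_r f\cdot M_s g$'' multiplies the two operator norms, so after converting $[\sigma]_{A_{p'/s}}$ back into the characteristics of $w$, the exponents of $[w]_{A_{p/r}}$ and $[w]_{RH_{(s'/p)'}}$ \emph{add}. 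Already in the classical case $r=1$, $s'=\infty$ this route gives $[w]_{A_p}^{1+\frac{1}{p-1}}$ rather than the sharp $[w]_{A_p}^{\max\{1,\frac1{p-1}\}}$; a sum is strictly larger than the maximum, so ``collecting powers'' cannot yield \eqref{e:alpha}, and producing the max is the entire content of the theorem. Bernicot--Frey--Petermichl obtain it by an asymmetric argument run through a single auxiliary weight: the quantitative Johnson--Neugebauer equivalence $w\in A_{p/r}\cap RH_\tau \iff w^{\tau}\in A_{\tau(p/r-1)+1}$ with $\tau=(s'/p)'$ and $[w^{\tau}]_{A_{\tau(p/r-1)+1}}\le \bigl([w]_{A_{p/r}}[w]_{RH_\tau}\bigr)^{\tau}$, followed by a weighted Carleson-embedding/maximal-function estimate with respect to the measure $w^{\tau}\,dx$; the maximum arises from two competing terms inside that single estimate, not from pairing two symmetric Buckley bounds. (Incidentally, in their Proposition 6.4 the exponent reads $\max\{\frac1{p-r},\frac{s'-1}{s'-p}\}$ in the present notation, agreeing with the display \eqref{e:alpha} only at $r=1$ — consistent with the fact that neither your route nor theirs produces $\frac1{p-1}$ for $r>1$ — so you should not contort the bookkeeping to output that exponent.)
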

%%%%%%%%%%%%%%%%%%%%%%%%%%%%%% THEOREM THEOREM THEOREM

For sparse forms of type $ (1,1)$, we recall that we have these estimates. 
\begin{align*}
\Lambda _{1,1} (f, g) &\lesssim [w] _{A_p} ^{\max \{1, \frac 1 {p-1}\}} \lVert f\rVert _{L ^{p} (w)} \lVert g\rVert _{L ^{p'} (\sigma )} , 
\\
\Lambda _{1,1,m} (f, g) &\lesssim [w] _{A_p} ^{\frac 1 {p-1}} \lVert f\rVert _{L ^{p} (w)} \lVert g\rVert _{L ^{p'} (\sigma )} . 
\end{align*}
Both estimates are well-known.  A very nice proof of the first bound can be found in \cite{MR3000426}.  The second follows from a comparison to the maximal function, namely Buckley's inequality \cite{MR1124164}.  Thus, the sparse forms and the $ m$-sparse forms can obey different weighted estimates.  

The papers \cites{MR3531367,MR3591468} supply explicit and  sharp estimates for $ (r,s)$-sparse forms.  But, they do so only for the form \eqref{e:sparse_def}, with $ F_Q \equiv Q$.  As this paper indicates, obtaining the sharp estimates for the $ m$-sparse forms is also interesting. 

%%%%%%%%%%%%%%%%%%%%%%%%%%%%%% SUBSECTION SUBSECTION SUBSECTION SUBSECTION 
\subsection{Sharpness of the Weighted Estimates}   

We conjecture that the bounds in Corollary \ref{c:wtd} are sharp in the category of weights allowed. 
For the sake of clarity, let us state a conjecture for the lacunary maximal function.  

\begin{conjecture} Using the notation of  Corollary \ref{c:wtd}, this holds.  
Let $1< r < p < \phi_{\textup{lac}}(r)'$, and set $\rho = (\phi_{\textup{lac}}(r)'/p)'$.  
If $ 1/\rho < \alpha$, then there is a weight $w= u_1 ^{\alpha} u_2 ^{-\frac pr+1}$, for weights $u_1,u_2\in A_1$, so that $M _{\textup{lac}}$ is not bounded on $L^p(w)$.  
\end{conjecture}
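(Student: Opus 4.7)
The plan is to combine the factorization of Proposition~\ref{p:factor} with the sharp radial power-weight classification of Duoandikoetxea--Vega. By Proposition~\ref{p:factor}, Corollary~\ref{c:wtd}(1) is equivalent to the inclusion $A_1^{1/\rho}A_1^{1-p/r}\subset\mathcal{L}_p$, so the conjecture asks whether the exponent $1/\rho$ on the first factor is sharp. For a given $\alpha>1/\rho$, the first attempt is to look for a counter-example among radial power weights by taking $u_i(x)=|x|^{-a_i}$ with $0\le a_i<n$; this yields $w(x)=|x|^b$ with
\[ b = -\alpha a_1 + (p/r - 1)\, a_2. \]
By the sharpness of Duoandikoetxea--Vega, $w\in\mathcal{L}_p$ forces $1-n\le b<(n-1)(p-1)$, and so it suffices to exhibit $(a_1,a_2)\in[0,n)^2$ with $b$ outside this interval.

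The achievable range of $b$ is the open interval $(-\alpha n,\ (p/r-1)n)$. Thus a radial-power counter-example exists exactly when one of two conditions holds: $\alpha > (n-1)/n$ (driving $b$ below $1-n$) or $(p/r-1)n > (n-1)(p-1)$ (driving $b$ above $(n-1)(p-1)$). Substituting the piecewise-linear formula for $\phi_{\textup{lac}}$ into $1/\rho = 1 - p/\phi_{\textup{lac}}(r)'$, one verifies that each of these two thresholds matches $1/\rho$ on exactly one of the two non-trivial edges of $\mathbf L_n$. On those edges, crossing $\alpha = 1/\rho$ from below activates the appropriate escape direction, and the power-weight construction delivers the conjecture immediately; at the interior vertex of $\mathbf L_n$ one checks directly (for $r = (n+1)/n$, $p \searrow r$) that $1/\rho \to (n-1)/n$ aligns with the lower escape threshold.

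The main obstacle is an intermediate sub-region --- e.g.\ $n=2$, $r=3/2$, $p=2$, where $1/\rho = 1/3$ but neither escape activates until $\alpha > 1/2$ --- in which radial power weights alone are insufficient. Here one must enlarge the class of $A_1$ inputs to, for instance, shifted power weights $|x - x_j|^{-a_j}$ or weights of the form $(M\mathbf 1_E)^{1-\varepsilon}$, arranged so that $w$ has a sharp singularity at some point $x_0$ while being small on the sphere $\{|x - x_0| = 1\}$. Testing $M_{\textup{lac}}$ against the thin-annulus function $f_\delta = \mathbf 1_{||x-x_0|-1|<\delta}$ from Proposition~\ref{p:sharp}, the pointwise bound~\eqref{e:LS1} should force $\lVert M_{\textup{lac}} f_\delta\rVert_{L^p(w)} / \lVert f_\delta\rVert_{L^p(w)} \to \infty$ as $\delta \to 0$. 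Making this heuristic rigorous across the full range $\alpha > 1/\rho$ is the crux of the conjecture, and appears to require weighted estimates genuinely beyond the radial-weight theory.
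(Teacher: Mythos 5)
You are attempting to prove something the paper itself leaves open: this statement is a \emph{conjecture}, stated without proof, and your proposal does not close it either --- your final paragraph concedes precisely this. What you do establish is correct but strictly weaker. With $u_i(x)=|x|^{-a_i}$, $0\le a_i<n$, one gets $w=|x|^b$ with $b=-\alpha a_1+(\tfrac pr-1)a_2$ ranging over $(-\alpha n,(\tfrac pr-1)n)$, and by the sharpness of the Duoandikoetxea--Vega classification a radial counterexample exists only if $\alpha>\tfrac{n-1}n$ or $(\tfrac pr-1)n>(n-1)(p-1)$. But the second disjunct is independent of $\alpha$ and rearranges to $p\,(1-\tfrac n{r'})>1$, which within the relevant range forces $p>\phi_{\textup{lac}}(r)'$ --- excluded by the standing hypothesis $p<\phi_{\textup{lac}}(r)'$ (as it must be, since otherwise Corollary~\ref{c:wtd} would already be contradicted). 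So ``crossing $\alpha=1/\rho$'' never activates that escape, and your claim that the two thresholds match $1/\rho$ on the two non-trivial edges of $\mathbf L_n$ is false. Indeed, since $1/\rho=1-p/\phi_{\textup{lac}}(r)'$, the strict inequality $1/\rho<\tfrac{n-1}n$ is equivalent to $p>\phi_{\textup{lac}}(r)'/n$, which holds for \emph{every} admissible pair $r<p$: on the branch $\tfrac1r\le\tfrac n{n+1}$ one has $\phi_{\textup{lac}}(r)'=rn$, so the condition is exactly $p>r$, while on the other branch $\phi_{\textup{lac}}(r)'/n<r$. Hence the window $1/\rho<\alpha\le\tfrac{n-1}n$, where radial powers are silent, is nonempty at every admissible $(r,p)$, closing only in the degenerate limit $p\to r$ at the vertex: your ``intermediate sub-region'' is not an exceptional pocket but the entire content of the conjecture.

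For that range your proposal offers a program, not an argument. To run it you would need to (a) exhibit a concrete $w=u_1^{\alpha}u_2^{1-p/r}$ with verified $A_1$ constants --- note that a weight singular along the sphere $\{|x-x_0|=1\}$, such as a negative power of the distance to a codimension-one set, is $A_1$ only for exponents strictly below $1$, a constraint your sketch never confronts --- and (b) actually prove the blow-up of $\lVert M_{\textup{lac}}f_\delta\rVert_{L^p(w)}/\lVert f_\delta\rVert_{L^p(w)}$, which requires estimating $M_{\textup{lac}}$ against $w$ across many lacunary scales, not just the single-scale pointwise bound \eqref{e:LS1}; that bound only records mass near one point, which is the same information the radial computation already exploits, and that computation shows a point singularity alone cannot beat the threshold $\tfrac{n-1}n$. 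Neither step is carried out, and the interplay between the $A_1$ power $\alpha$ at a point and the reverse H\"older behavior near the annuli is exactly where the difficulty sits. In summary: you have a correct and worthwhile partial result --- sharpness of the factorization exponent in the restricted regime $\alpha>\tfrac{n-1}n$ via radial weights --- but the statement as given, covering the full range $\alpha>1/\rho$, remains unproven, consistent with the paper, which poses it as an open conjecture.
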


%%%%%%%%%%%%%%%%%%%%%%%%%%%%%% SUBSECTION SUBSECTION SUBSECTION SUBSECTION
 %%%%%%%%%%%%%%%%%%%%%%%%%%%%%% SUBSECTION SUBSECTION SUBSECTION SUBSECTION 
\subsection{The Endpoint Estimate}%\label{ss.}

A result of Seeger, Tao and Wright  addresses an endpoint estimate for the lacunary spherical maximal function, showing this.  

%%%%%%%%%%%%%%%%%%%%%%%%%%%%%% THEOREM THEOREM THEOREM
\begin{priorResults}\label{t:lac1}  \cite{MR2058385} The lacunary maximal function $ M _{\textup{lac}}$ is bounded as a map from $ L \log\log L$ into weak $ L ^{1}$. 
\end{priorResults}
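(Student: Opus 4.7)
The plan is to follow the classical strategy of frequency decomposition combined with a Calderón--Zygmund argument, where the $L\log\log L$ condition arises from carefully summing contributions across frequency scales. First I would fix a Littlewood--Paley decomposition $\sum_{k\ge 0}\psi_k \equiv 1$ on the frequency side with $\psi_k$ supported in $|\xi|\sim 2^k$, and set $A_r^{(k)} f$ to be $A_r$ applied to the frequency-localized function $f_k$ (so that $A_r = \sum_k A_r^{(k)}$). The associated lacunary maximal operators are
\begin{equation}
M_k f = \sup_{j\in\mathbb Z} |A_{2^j}^{(k)} f|.
\end{equation}
The key inputs are (i) an $L^2$ bound $\lVert M_k \rVert_{2\to 2} \lesssim 2^{-k(n-1)/2}\cdot k$ that follows from Stein's square-function argument using $|\widehat{d\sigma}(\xi)|\lesssim (1+|\xi|)^{-(n-1)/2}$ and a maximal estimate for differentiation in the radial parameter, and (ii) a weak-type endpoint bound
\begin{equation}
\lVert M_k f\rVert_{1,\infty} \lesssim (1+k)\,\lVert f\rVert_{1},
\end{equation}
with at worst logarithmic loss in $k$. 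Interpolation then gives a genuine $L^p$ bound with geometric decay in $k$ for $p>1$, but the issue is what happens as $p\downarrow 1$.

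Next I would prove the weak-$(1,1)$ bound with logarithmic loss for each $M_k$. Given $f\in L^1$ and $\lambda>0$, perform the Calderón--Zygmund decomposition $f = g+b$ at height $\lambda$, with $b=\sum_P b_P$, each $b_P$ mean-zero and supported in a cube $P$. The good part is handled by the $L^2$ estimate. For the bad part, a crucial geometric fact is that $A_{2^j}^{(k)}$ has kernel essentially supported in an annulus of inner radius $2^j$ and thickness $2^{j-k}$, so $b_P$ contributes to $M_k b$ only when $2^{j-k} \gtrsim \ell(P)$, i.e. when $j \ge k + \log_2 \ell(P)$. On this range, translation-invariance of the measure combined with the mean-zero property of $b_P$ produces a gain $2^{-\eta(j-k-\log_2\ell(P))}$ in $L^2$, and integration over the relevant $j$ plus a standard covering argument for the exceptional set yields the $(1+k)$ loss. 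This is essentially the argument of Christ refined to track dependence on the frequency scale.

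To pass from per-band bounds to the global endpoint, I would decompose $f$ by \emph{level sets} of its size, writing $f = \sum_{\ell} f_\ell$ with $f_\ell$ supported on $\{2^{\ell}\le |f|<2^{\ell+1}\}$, and match each $f_\ell$ with a different frequency scale $k(\ell)$ depending on the distribution function. Bounding $M_{\textup{lac}} f \le \sum_k M_k f$ and applying the above to $f = \sum_\ell f_\ell$, one obtains
\begin{equation}
\bigl|\{M_{\textup{lac}} f > \lambda\}\bigr| \lesssim \frac{1}{\lambda}\sum_k (1+k)\,\lVert f_{\ell(k)}\rVert_1,
\end{equation}
where the choice of $\ell(k)$ trades off the $L^2$ gain in $k$ against the $L^1$ norm of $f_{\ell}$. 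Optimizing this exchange and using the layer-cake formula leads naturally to an integral of the form $\int |f|\log\log(e+|f|/\lambda)\,dx$, which is precisely the $L\log\log L$ Orlicz norm.

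The main obstacle, and the place where real work beyond what I have sketched is required, is the linear-in-$k$ weak-$(1,1)$ bound \eqref{e:M_Lee}-like estimate for $M_k$. A naive appeal to a Cotlar-type almost-orthogonality inequality or a Littlewood--Paley square function only gives polynomial loss $k^C$ for some large $C$, which is much too weak; a $k^2$ or even $k\log k$ loss would degrade the final modulus from $L\log\log L$ to $L\log L$. Obtaining the sharp $(1+k)$ loss requires a delicate geometric covering argument for the sets where the bad part $b$ produces large values of $M_k b$, exploiting the curvature of the sphere to control how many annuli at scale $2^{j-k}$ can simultaneously witness a single cube $P$. This is the heart of the Seeger--Tao--Wright argument, and the point at which the $L^p$-improving estimates underlying our sparse bounds are insufficient: one needs an actual counting/packing estimate for the configuration of spherical shells meeting a prescribed cube.
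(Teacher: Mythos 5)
This statement is quoted in the paper as a prior result of Seeger, Tao and Wright \cite{MR2058385}; the paper offers no proof of it (and indeed remarks that the $TT^{\ast}$-based argument there does not even yield the sparse bounds one might naively read off from it), so your proposal must be judged against the actual Seeger--Tao--Wright argument, and it has a genuine quantitative gap. The summation scheme in your third step cannot produce $L\log\log L$ even if both per-band lemmas were granted. A level piece $f_\ell$ of height $h=2^{\ell}$ interacts with \emph{every} frequency band $k$, not with a single matched band $k(\ell)$: the $L^2$ gain $2^{-k(n-1)/2}$ only beats the ratio $h/\lambda$ once $k\gtrsim \log (h/\lambda)$, so for all smaller $k$ you must invoke the weak-type bound with loss $(1+k)$. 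Summing those losses (and weak $L^1$ is not normed, so you must additionally split thresholds $c_k\lambda$ with $\sum_k c_k=1$, \`a la Stein--Weiss, incurring further factors) gives a total loss of order $\bigl(\log (h/\lambda)\bigr)^{2}$ per level, i.e.\ at best an $L(\log L)^{2}\to L^{1,\infty}$ bound. Since $L(\log L)^{2}\subsetneq L\log L\subsetneq L\log\log L$, this is strictly weaker than the theorem. To obtain a double logarithm, the number of scales that escape the $L^2$ argument must itself be of size $\log\log(h/\lambda)$, or the per-scale losses must telescope; this is exactly what the Seeger--Tao--Wright density/multiplicity decomposition and induction on scales achieves, and it is not a refinement of the band-by-band scheme you describe --- their loss grows like the logarithm of the number of interacting scales, not linearly in it.

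There is a second, more local error: the geometric claim that $b_P$ contributes to $M_k b$ only when $2^{j-k}\gtrsim \ell(P)$ is backwards. The frequency-localized kernel is only \emph{essentially} supported in an annulus of thickness $2^{j-k}$ (with Schwartz tails), and $b_P$ contributes at all scales $2^{j}\gtrsim \ell(P)$; the mean-zero gain $\ell(P)2^{k-j}$ is available precisely when the kernel is smooth at scale $\geq \ell(P)$, i.e.\ when $2^{j-k}\gtrsim \ell(P)$. The intermediate range $\ell(P)\lesssim 2^{j}\lesssim 2^{k}\ell(P)$ --- roughly $k$ scales per cube, with annuli thinner than the cube and hence no cancellation gain --- is the hard regime, and your sketch assumes it is empty. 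Controlling it is where the counting/packing analysis of spherical shells meeting a cube actually enters, so the step you defer to ``the argument of Christ refined'' is assuming away the main difficulty rather than isolating it.
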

%%%%%%%%%%%%%%%%%%%%%%%%%%%%%% THEOREM THEOREM THEOREM

Also see the recent significant improvement by Cladek and Krause \cite{170301508}.  
The proof is based upon $ T T ^{\ast}  $ methods, and so it is tempting to think that a reading of the paper might prove sparse bound for $ M _{\textup{lac}}$ of the form $ (r,2)$, for all $ 1< r < 2$.
But such a sparse bound cannot hold. 
It is however interesting to speculate about what sparse bound the argument of \cite{MR2058385} would imply.  

%%%%%%%%%%%%%%%%%%%%%%%%%%%%%% SUBSECTION SUBSECTION SUBSECTION SUBSECTION
 %%%%%%%%%%%%%%%%%%%%%%%%%%%%%% SUBSECTION SUBSECTION SUBSECTION SUBSECTION 
\subsection{Other Themes}%\label{ss.}

\textbf{1.} As was pointed out by Duoandikoetxea and Vega \cite{MR1373065}, it is interesting to establish inequalities of  
Fefferman-Stein type, namely 
\begin{equation*}
\lVert M _{\textup{lac}} \,:\, L ^{p} (w) \mapsto L ^{p} (N w)\rVert, 
\end{equation*}
for some auxiliary maximal operator $ N$.  This has been addressed in \cite{MR3418202}. It would be interesting to extend the results of this paper.  

\smallskip 

\textbf{2.} The paper \cite{MR1922609} studies weighted inequalities from $ L ^{p}  $ to $ L ^{q}$ spaces for the maximal operator 
\begin{equation*}
\sup _{t>0} t ^{\alpha } A_t f , \qquad \alpha = n (\tfrac 1p - \tfrac 1q). 
\end{equation*}
Sparse bounds should be possible for such an operator.  

\smallskip 

\textbf{3.} Variants of the maximal operator,  formed over restricted ranges of radii of spheres have been considered. Namely, 
\begin{equation*}
\sup _{t \in E}  A_t f , \qquad  E\subset (0, \infty ). 
\end{equation*}
See \cite{MR1955209}. Subject to a dimensionality condition on $ E$, a range of $ L ^{p}$ inequalities can be proved.  Again, sparse bounds should be available in this setting.

\textbf{4.} 
The paper of Jones, Seeger and Wright \cite{MR2434308}*{Thm 1.4} prove variational results  for the full spherical maximal function.  It would be interesting to extend this bound to a sparse bound.  
Also see \cite{160405506} for the some sparse variational results.

\textbf{5.} 
Sparse bounds should hold for other Radon transforms.  Key components would be (a) an appropriate dilation structure, and (b) variants of the continuity results Theorem~\ref{t:littman} and Theorem~\ref{t:full_improve}.   Note that these will become more involved in the cases in the variable curve case, as in \cite{MR1432805}.   

\textbf{6.} Cladek and Y.\ Ou \cite{170407810} have studied sparse bounds for Hilbert transforms and averages along a general class of curves.  

\bibliographystyle{alpha,amsplain}	
% \bib, bibdiv, biblist are defined by the amsrefs package.
\begin{bibdiv}
\begin{biblist}

\bib{MR3531367}{article}{
      author={Bernicot, Fr{\'e}d{\'e}ric},
      author={Frey, Dorothee},
      author={Petermichl, Stefanie},
       title={Sharp weighted norm estimates beyond {C}alder\'on-{Z}ygmund
  theory},
        date={2016},
        ISSN={2157-5045},
     journal={Anal. PDE},
      volume={9},
      number={5},
       pages={1079\ndash 1113},
  url={http://dx.doi.org.prx.library.gatech.edu/10.2140/apde.2016.9.1079},
      review={\MR{3531367}},
}

\bib{MR1124164}{article}{
      author={Buckley, Stephen~M.},
       title={Estimates for operator norms on weighted spaces and reverse
  {J}ensen inequalities},
        date={1993},
        ISSN={0002-9947},
     journal={Trans. Amer. Math. Soc.},
      volume={340},
      number={1},
       pages={253\ndash 272},
         url={http://dx.doi.org.prx.library.gatech.edu/10.2307/2154555},
      review={\MR{1124164}},
}

\bib{MR537803}{article}{
      author={Calder\'on, Calixto~P.},
       title={Lacunary spherical means},
        date={1979},
        ISSN={0019-2082},
     journal={Illinois J. Math.},
      volume={23},
      number={3},
       pages={476\ndash 484},
         url={http://projecteuclid.org/euclid.ijm/1256048109},
      review={\MR{537803}},
}

\bib{170301508}{article}{
      author={{Cladek}, L.},
      author={{Krause}, B.},
       title={{Improved endpoint bounds for the lacunary spherical maximal
  operator}},
        date={2017-03},
     journal={ArXiv e-prints},
      eprint={1703.01508},
}

\bib{170407810}{article}{
 author={{Cladek}, L.},
      author={Ou, Y.}, 
    title = {Sparse domination of Hilbert transforms along curves},
   eprint = {1704.07810},
     year = {2017},
}

\bib{MR1567040}{article}{
      author={Coifman, R.~R.},
      author={Weiss, Guido},
       title={Book {R}eview: {L}ittlewood-{P}aley and multiplier theory},
        date={1978},
        ISSN={0002-9904},
     journal={Bull. Amer. Math. Soc.},
      volume={84},
      number={2},
       pages={242\ndash 250},
  url={http://dx.doi.org.prx.library.gatech.edu/10.1090/S0002-9904-1978-14464-4},
      review={\MR{1567040}},
}

\bib{2016arXiv161209201C}{article}{
      author={{Conde-Alonso}, J.~M.},
      author={{Culiuc}, A.},
      author={{Di Plinio}, F.},
      author={{Ou}, Y.},
       title={{A sparse domination principle for rough singular integrals}},
        date={2016-12},
     journal={ArXiv e-prints},
      eprint={1612.09201},
}

\bib{MR1922609}{article}{
      author={Cowling, Michael},
      author={Garc\'\i~a Cuerva, Jos\'e},
      author={Gunawan, Hendra},
       title={Weighted estimates for fractional maximal functions related to
  spherical means},
        date={2002},
        ISSN={0004-9727},
     journal={Bull. Austral. Math. Soc.},
      volume={66},
      number={1},
       pages={75\ndash 90},
  url={http://dx.doi.org.prx.library.gatech.edu/10.1017/S0004972700020694},
      review={\MR{1922609}},
}

\bib{160305317}{article}{
      author={{Culiuc}, A.},
      author={{Di Plinio}, F.},
      author={{Ou}, Y.},
       title={{Domination of multilinear singular integrals by positive sparse
  forms}},
        date={2016-03},
     journal={ArXiv e-prints},
      eprint={1603.05317},
}

\bib{2016arXiv161208881C}{article}{
      author={{Culiuc}, A.},
      author={{Kesler}, R.},
      author={{Lacey}, M.~T.},
       title={{Sparse Bounds for the Discrete Cubic Hilbert Transform}},
        date={2016-12},
     journal={ArXiv e-prints},
      eprint={1612.08881},
}

\bib{160405506}{article}{
      author={{de Fran{\c c}a Silva}, F.~C.},
      author={{Zorin-Kranich}, P.},
       title={{Sparse domination of sharp variational truncations}},
        date={2016-04},
     journal={ArXiv e-prints},
      eprint={1604.05506},
}

\bib{161203028}{article}{
      author={{Di Plinio}, F.},
      author={{Do}, Y.~Q.},
      author={{Uraltsev}, G.~N.},
       title={{Positive sparse domination of variational Carleson operators}},
        date={2016-12},
     journal={ArXiv e-prints},
      eprint={1612.03028},
}

\bib{MR1373065}{article}{
      author={Duoandikoetxea, Javier},
      author={Vega, Luis},
       title={Spherical means and weighted inequalities},
        date={1996},
        ISSN={0024-6107},
     journal={J. London Math. Soc. (2)},
      volume={53},
      number={2},
       pages={343\ndash 353},
         url={http://dx.doi.org.prx.library.gatech.edu/10.1112/jlms/53.2.343},
      review={\MR{1373065}},
}

\bib{MR2434308}{article}{
      author={Jones, Roger~L.},
      author={Seeger, Andreas},
      author={Wright, James},
       title={Strong variational and jump inequalities in harmonic analysis},
        date={2008},
        ISSN={0002-9947},
     journal={Trans. Amer. Math. Soc.},
      volume={360},
      number={12},
       pages={6711\ndash 6742},
         url={http://dx.doi.org/10.1090/S0002-9947-08-04538-8},
      review={\MR{2434308}},
}

\bib{2016arXiv161004968K}{article}{
      author={{Krause}, B.},
      author={{Lacey}, M.},
      author={{Wierdl}, M.},
       title={{On Convergence of Oscillatory Ergodic Hilbert Transforms}},
        date={2016-10},
     journal={ArXiv e-prints},
      eprint={1610.04968},
}

\bib{2016arXiv160901564K}{article}{
      author={{Krause}, B.},
      author={{Lacey}, M.~T.},
       title={{A Weak Type Inequality for Maximal Monomial Oscillatory Hilbert
  Transforms}},
        date={2016-09},
     journal={ArXiv e-prints},
      eprint={1609.01564},
}

\bib{2017arXiv170105249K}{article}{
      author={{Krause}, B.},
      author={{Lacey}, M.~T.},
       title={{Sparse Bounds for Maximally Truncated Oscillatory Singular
  Integrals}},
        date={2017-01},
     journal={ArXiv e-prints},
      eprint={1701.05249},
}

\bib{160908701}{article}{
      author={{Krause}, Ben},
      author={Lacey, Michael~T.},
       title={{Sparse Bounds for Random Discrete Carleson Theorems}},
        date={2016-09},
     journal={ArXiv e-prints},
      eprint={1609.08701},
}

\bib{2016arXiv161001531L}{article}{
      author={{Lacey}, M.~T.},
      author={{Mena}, D.},
       title={{The Sparse T1 Theorem}},
        date={2016-10},
     journal={ArXiv e-prints},
      eprint={1610.01531},
}

\bib{MR3625108}{article}{
      author={Lacey, Michael~T.},
       title={An elementary proof of the {$A_2$} bound},
        date={2017},
        ISSN={0021-2172},
     journal={Israel J. Math.},
      volume={217},
      number={1},
       pages={181\ndash 195},
  url={http://dx.doi.org.prx.library.gatech.edu/10.1007/s11856-017-1442-x},
      review={\MR{3625108}},
}

\bib{160906364}{article}{
      author={{Lacey}, Michael~T.},
      author={Spencer, Scott},
       title={{Sparse Bounds for Oscillatory and Random Singular Integrals}},
        date={2016-09},
     journal={ArXiv e-prints},
      eprint={1609.06364},
}

\bib{MR1949873}{article}{
      author={Lee, Sanghyuk},
       title={Endpoint estimates for the circular maximal function},
        date={2003},
        ISSN={0002-9939},
     journal={Proc. Amer. Math. Soc.},
      volume={131},
      number={5},
       pages={1433\ndash 1442},
  url={http://dx.doi.org.prx.library.gatech.edu/10.1090/S0002-9939-02-06781-3},
      review={\MR{1949873}},
}

\bib{160401334}{article}{
      author={{Lerner}, A.~K.},
      author={{Ombrosi}, S.},
      author={{Rivera-R{\'{\i}}os}, I.~P.},
       title={{On pointwise and weighted estimates for commutators of
  Calder$\backslash$'on-Zygmund operators}},
        date={2016-04},
     journal={ArXiv e-prints},
      eprint={1604.01334},
}

\bib{2017arXiv170105170L}{article}{
      author={{Li}, K.},
      author={{P{\'e}rez}, C.},
      author={{Rivera-R{\'{\i}}os}, I.~P.},
      author={{Roncal}, L.},
       title={{Improved weighted bounds for rough singular integral
  operators}},
        date={2017-01},
     journal={ArXiv e-prints},
      eprint={1701.05170},
}

\bib{MR3591468}{article}{
      author={Li, Kangwei},
       title={Two weight inequalities for bilinear forms},
        date={2017},
        ISSN={0010-0757},
     journal={Collect. Math.},
      volume={68},
      number={1},
       pages={129\ndash 144},
  url={http://dx.doi.org.prx.library.gatech.edu/10.1007/s13348-016-0182-2},
      review={\MR{3591468}},
}

\bib{MR0358443}{article}{
      author={Littman, Walter},
       title={{$L\sp{p}-L\sp{q}$}-estimates for singular integral operators
  arising from hyperbolic equations},
        date={1973},
       pages={479\ndash 481},
      review={\MR{0358443}},
}

\bib{MR3418202}{article}{
      author={Manna, Ramesh},
       title={Weighted inequalities for spherical maximal operator},
        date={2015},
        ISSN={0386-2194},
     journal={Proc. Japan Acad. Ser. A Math. Sci.},
      volume={91},
      number={9},
       pages={135\ndash 140},
         url={http://dx.doi.org.prx.library.gatech.edu/10.3792/pjaa.91.135},
      review={\MR{3418202}},
}

\bib{MR3000426}{article}{
      author={Moen, Kabe},
       title={Sharp weighted bounds without testing or extrapolation},
        date={2012},
        ISSN={0003-889X},
     journal={Arch. Math. (Basel)},
      volume={99},
      number={5},
       pages={457\ndash 466},
  url={http://dx.doi.org.prx.library.gatech.edu/10.1007/s00013-012-0453-4},
      review={\MR{3000426}},
}

\bib{170404297}{article}{
      author={{Oberlin}, R.},
       title={{Sparse bounds for a prototypical singular Radon transform}},
        date={2017-04},
     journal={ArXiv e-prints},
      eprint={1704.04297},
}

\bib{MR1388870}{article}{
      author={Schlag, W.},
       title={A generalization of {B}ourgain's circular maximal theorem},
        date={1997},
        ISSN={0894-0347},
     journal={J. Amer. Math. Soc.},
      volume={10},
      number={1},
       pages={103\ndash 122},
  url={http://dx.doi.org.prx.library.gatech.edu/10.1090/S0894-0347-97-00217-8},
      review={\MR{1388870}},
}

\bib{MR1432805}{article}{
      author={Schlag, Wilhelm},
      author={Sogge, Christopher~D.},
       title={Local smoothing estimates related to the circular maximal
  theorem},
        date={1997},
        ISSN={1073-2780},
     journal={Math. Res. Lett.},
      volume={4},
      number={1},
       pages={1\ndash 15},
  url={http://dx.doi.org.prx.library.gatech.edu/10.4310/MRL.1997.v4.n1.a1},
      review={\MR{1432805}},
}

\bib{MR1955209}{article}{
      author={Seeger, Andreas},
      author={Tao, Terence},
      author={Wright, James},
       title={Endpoint mapping properties of spherical maximal operators},
        date={2003},
        ISSN={1474-7480},
     journal={J. Inst. Math. Jussieu},
      volume={2},
      number={1},
       pages={109\ndash 144},
  url={http://dx.doi.org.prx.library.gatech.edu/10.1017/S1474748003000057},
      review={\MR{1955209}},
}

\bib{MR2058385}{article}{
      author={Seeger, Andreas},
      author={Tao, Terence},
      author={Wright, James},
       title={Singular maximal functions and {R}adon transforms near {$L^1$}},
        date={2004},
        ISSN={0002-9327},
     journal={Amer. J. Math.},
      volume={126},
      number={3},
       pages={607\ndash 647},
  url={http://muse.jhu.edu.prx.library.gatech.edu/journals/american_journal_of_mathematics/v126/126.3seeger.pdf},
      review={\MR{2058385}},
}

\bib{MR0420116}{article}{
      author={Stein, Elias~M.},
       title={Maximal functions. {I}. {S}pherical means},
        date={1976},
        ISSN={0027-8424},
     journal={Proc. Nat. Acad. Sci. U.S.A.},
      volume={73},
      number={7},
       pages={2174\ndash 2175},
      review={\MR{0420116}},
}

\bib{MR0256219}{article}{
      author={Strichartz, Robert~S.},
       title={Convolutions with kernels having singularities on a sphere},
        date={1970},
        ISSN={0002-9947},
     journal={Trans. Amer. Math. Soc.},
      volume={148},
       pages={461\ndash 471},
         url={http://dx.doi.org.prx.library.gatech.edu/10.2307/1995383},
      review={\MR{0256219}},
}

\end{biblist}
\end{bibdiv}

\end{document}